\documentclass[journal]{IEEEtran}
\usepackage{amsmath,amsfonts,bm}
\ifCLASSINFOpdf
\else
   \usepackage[dvips]{graphicx}
\fi
\usepackage{url}
\usepackage{cite}
\usepackage{hyperref}
\usepackage{mathtools}
\usepackage{amssymb}
\usepackage{amsthm}
\usepackage{mathrsfs}
\usepackage{algorithm}
\usepackage{color}
\usepackage{algorithmic}
\usepackage{bbding}
\newtheorem{definition}{\bf Definition}
\newtheorem{assumption}{\bf Assumption}
\newtheorem{theorem}{\bf Theorem}
\newtheorem{lemma}{\bf Lemma}

\usepackage{graphicx}

\begin{document}

\title{Riemannian Momentum Tracking: Distributed Optimization with Momentum on Compact Submanifolds}

\author{Jun Chen$^*$, Tianyi Zhu$^*$, Haishan Ye, Lina Liu, Guang Dai, Yong Liu,~\IEEEmembership{Member,~IEEE}, Yunliang Jiang, \\ Ivor W.~Tsang,~\IEEEmembership{Fellow,~IEEE}
\thanks{Jun Chen is with the National Special Education Resource Center for Children with Autism, Zhejiang Normal University, Hangzhou 311231, China, and with the School of Computer Science and Technology, Zhejiang Normal University, Jinhua 321004, China (E-mail: junc.change@gmail.com).}
\thanks{Tianyi Zhu and Lina Liu are with China Mobile Research Institute, Beijing 100032, China. (E-mail: liulina0601@gmail.com; zhu-ty@outlook.com).}
\thanks{Haishan Ye is with the School of Management, Xi'an Jiaotong University, China (E-mail: yehaishan@xjtu.edu.cn).}
\thanks{Guang Dai is with SGIT AI Lab, State Grid Corporation of China, China.}
\thanks{Yong Liu is with the Institute of Cyber-Systems and Control, Zhejiang University, Hangzhou 310027, China (E-mail: yongliu@iipc.zju.edu.cn).}
\thanks{Yunliang Jiang is with the Zhejiang Key Laboratory of Intelligent Education Technology and Application, Zhejiang Normal University, Jinhua 321004, China, and with the School of Computer Science and Technology, Zhejiang Normal University, Jinhua 321004, China , and also with the School of Information Engineering, Huzhou University, Huzhou 313000, China (E-mail: jyl2022@zjnu.cn).}
\thanks{Ivor W.~Tsang is with the Centre for Frontier Artificial Intelligence Research, A*STAR, Singapore (E-mail: ivor.tsang@gmail.com).}
\thanks{$^{*}$Equal contribution.}
}

\markboth{IEEE TRANSACTIONS ON CONTROL OF NETWORK SYSTEMS}
{Shell \MakeLowercase{\textit{et al.}}: Bare Demo of IEEEtran.cls for IEEE Journals}
\maketitle

\begin{abstract}
Gradient descent with momentum has been widely applied in various signal processing and machine learning tasks, demonstrating a notable empirical advantage over standard gradient descent. However, momentum-based distributed Riemannian algorithms have been only scarcely explored. In this paper, we propose Riemannian Momentum Tracking (RMTracking), a decentralized optimization algorithm with momentum over a compact submanifold. Given the non-convex nature of compact submanifolds, the objective function, composed of a finite sum of smooth (possibly non-convex) local functions, is minimized across agents in an undirected and connected network graph.
With a constant step-size, we establish an $\mathcal{O}(\frac{1-\beta}{K})$ convergence rate of the Riemannian gradient average for any momentum weight $\beta \in [0,1)$. Especially, RMTracking can achieve a convergence rate of $\mathcal{O}(\frac{1-\beta}{K})$ to a stationary point when the step-size is sufficiently small. To best of our knowledge, RMTracking is the first decentralized algorithm to achieve exact convergence that is $\frac{1}{1-\beta}$ times faster than other related algorithms. Finally, we verify these theoretical claims through numerical experiments on eigenvalue problems.
\end{abstract}

\begin{IEEEkeywords}
Distributed optimization, Compact submanifold, Gradient tracking, Momentum term
\end{IEEEkeywords}

\IEEEpeerreviewmaketitle

\section{Introduction}

\IEEEPARstart{I}{n} large-scale systems such as signal processing, control, and machine learning, data is often distributed across numerous nodes, making it challenging for a centralized server to manage the increasing computational demands. As a result, 
distributed optimization has become increasingly important in recent years due to the challenges posed by large-scale datasets and complex multi-agent systems. Among the explored approaches, the distributed sub-gradient method is notable for its simplicity in combining local gradient descent to reduce the consensus error~\cite{nedic2009distributed,yuan2016convergence}. To achieve exact convergence to an $\epsilon$-stationary point, various algorithms have leveraged both local and global historical information. For instance, the gradient tracking algorithm~\cite{qu2017harnessing,yuan2018exact}, primal-dual framework~\cite{alghunaim2020decentralized}, and ADMM~\cite{shi2014linear,aybat2017distributed} have been explored with convex local functions.

Let $\mathcal{M}$ represent a compact submanifold of $\mathbb{R}^{d\times r}$, such as the Stiefel manifold or the Grassmann manifold \cite{deng2023decentralized}. We consider the following distributed non-convex (but smooth) optimization problem over a compact submanifold: 

\begin{equation}
\begin{aligned}
&\min \frac{1}{n} \sum_{i=1}^n f_i\left(x_i\right), \\
&\text { s.t. }  x_1=x_2=\cdots=x_n, \quad x_i \in \mathcal{M}, \quad \forall i \in [n],
\end{aligned}
\label{decentralized}
\end{equation}
where $n$ represents the number of agents, and $f_i$ denotes the local function for each agent. Many important large-scale tasks can be formulated as this distributed optimization problem, such as the principle component analysis~\cite{ye2021deepca}, eigenvalue estimation~\cite{chen2021decentralized}, dictionary learning~\cite{raja2015cloud}, and deep neural networks with orthogonal constraint~\cite{vorontsov2017orthogonality,huang2018orthogonal,eryilmaz2022understanding}.
 
\begin{table*}[t]
	\caption{Comparison with existing algorithms based on the criterion $\|\operatorname{grad} f\|^2 \leq \epsilon$. ``GT" represents the gradient tracking.}
	\begin{center}
		\begin{tabular}{cccccccccc}
			\hline
			  Methods & GT & $\alpha$ & Order & Manifold & Operator & $\Vert \hat{g}_k \Vert^2$ & $\frac{1}{n}\Vert \mathbf{x}_{k} - \bar{\mathbf{x}}_{k} \Vert^2$ & Convergence rate  & Momentum \\
			\hline
            DRDGD~\cite{chen2021decentralized}  & \XSolidBrush & $\mathcal{O}(\frac{1}{\sqrt{k}})$ & First-order & Stiefel & Retraction & / & / & $\mathcal{O}\left(\frac{1}{\sqrt{K}}\right)$ & \XSolidBrush \\
            DPRGD~\cite{deng2023decentralized} & \XSolidBrush & $\mathcal{O}(\frac{1}{\sqrt{k}})$ & First-order & Compact & Projection & / & / & $\mathcal{O}\left(\frac{1}{\sqrt{K}}\right)$ & \XSolidBrush \\
            DRCGD~\cite{chen2024decentralized} & \XSolidBrush & $\mathcal{O}(\frac{1}{\sqrt{k}})$ & First-order & Stiefel & Projection & / & / & / & \XSolidBrush \\
            \hline
            DRGTA~\cite{chen2021decentralized} & \CheckmarkBold & $\mathcal{O}(1)$ & First-order & Stiefel & Retraction & $\mathcal{O}\left(\frac{1}{\alpha K}\right)$ & $\mathcal{O}\left(\frac{1}{K}\right)$ & $\mathcal{O}\left(\frac{1}{\alpha K}\right)$ & \XSolidBrush \\
            DPRGT~\cite{deng2023decentralized} & \CheckmarkBold & $\mathcal{O}(1)$ & First-order & Compact & Projection & $\mathcal{O}\left(\frac{1}{\alpha K}\right)$ & $\mathcal{O}\left(\frac{1}{K}\right)$ & $\mathcal{O}\left(\frac{1}{\alpha K}\right)$ & \XSolidBrush \\
            DRNGD~\cite{hu2023decentralized} & \CheckmarkBold & $\mathcal{O}(1)$ & Second-order & Compact & Retraction & / & $\mathcal{O}\left(\frac{\alpha}{K}\right)$ & $\mathcal{O}\left(\frac{1}{\alpha K}\right)$ & \XSolidBrush \\
            \hline
            \textbf{This paper} & \CheckmarkBold & $\mathcal{O}(1)$ & First-order & Compact & Projection & $\mathcal{O}\left(\frac{1-\beta}{\alpha K}\right)$ & $\mathcal{O}\left(\frac{1}{K}\right)$ & $\mathcal{O}\left(\frac{1}{\alpha K}\right)$ & \CheckmarkBold \\
            \textbf{This paper} (small $\alpha$) & \CheckmarkBold & $\mathcal{O}(1)$ & First-order & Compact & Projection & $\mathcal{O}\left(\frac{1-\beta}{\alpha K}\right)$ & $\mathcal{O}\left(\frac{1-\beta}{K}\right)$ & $\mathcal{O}\left(\frac{1-\beta}{\alpha K}\right)$ & \CheckmarkBold \\
			\hline
		\end{tabular}
	\end{center}
\label{summary}
\end{table*}

Existing Euclidean distributed composite optimization methods cannot solve the problem in (\ref{decentralized}) due to the inherent non-convexity of the compact submanifold. On the other hand, Riemannian optimization, which involves optimization on Riemannian manifolds, presents additional challenges in constructing a consensus step~\cite{absil2008optimization,boumal2019global,sato2021riemannian}. For example, a straightforward approach involves computing the arithmetic average in Euclidean space is not viable on Riemannian manifolds, as the resulting mean may fall outside the manifold. To overcome this limitation, Ref. \cite{shah2017distributed} developed a Riemannian consensus approach, but it requires an asymptotically infinite number of consensus steps to achieve convergence.
In response, Ref. \cite{wang2022decentralized} integrated an augmented Lagrangian function with the gradient tracking algorithm to achieve single-step consensus for convergence.

When $\mathcal{M}$ is the Stiefel manifold, an embedded submanifold in Euclidean space \cite{chen2023local}, Ref. \cite{chen2021decentralized} proposed a distributed Riemannian gradient descent (DRDGD) algorithm over the Stiefel manifold, achieving exact convergence within a finite number of consensus steps at a rate of $\mathcal{O}(\frac{1}{\sqrt{K}})$. Additionally, it developed the gradient tracking variant (DRGTA), which reaches an $\epsilon$-stationary point with a convergence rate of $\mathcal{O}(\frac{1}{K})$. More recently, Ref. \cite{chen2024decentralized} proposed the first distributed Riemannian conjugate gradient descent (DRCGD) algorithm and established its global convergence on the Stiefel manifold.
Building on these advancements, Ref. \cite{deng2023decentralized} extended the scope from the Stiefel manifold to the compact submanifold. Specifically, it employed projection operators instead of retractions, resulting in the distributed projected Riemannian gradient descent (DPRGD) algorithm for compact submanifolds with the convergence rate of $\mathcal{O}(\frac{1}{\sqrt{K}})$. Similarly, the gradient tracking variant (DPRGT) was developed, reaching an $\epsilon$-stationary point with a convergence rate of $\mathcal{O}(\frac{1}{K})$. Recently, Ref. \cite{zhao2024distributed} proposed a distributed Riemannian stochastic gradient tracking algorithm on the Stiefel manifold, which integrates gradient tracking with a variable sample-size strategy for gradient approximation. Since the efficiency of distributed optimization is often hindered by communication bottlenecks, Ref. \cite{chen2025decentralized} proposed the Quantized Riemannian Gradient Tracking (Q-RGT) algorithm, which is free from retraction and projection, and iterates within the neighborhood of compact submanifolds.

Inspired by Ref. \cite{takezawamomentum}, this paper introduces the momentum term into distributed Riemannian optimization. The concept of the momentum originates from Polyak's heavy-ball method \cite{polyak1964some}. Gradient descent with heavy-ball method establishes an accelerated linear convergence rate for strongly convex and smooth problems \cite{ghadimi2015global}. On Riemannian manifolds, momentum-based approaches have also been developed. Ref. \cite{becigneul2018riemannian} intrinsically generalized the momentum term to the Riemannian setting. Ref. \cite{han2020riemannian} proposed a stochastic recursive momentum algorithm that achieves the complexity of $\mathcal{O}(\epsilon^{-3})$ to find $\epsilon$-approximate solution for Riemannian optimization. Ref. \cite{alimisis2021momentum} exploited momentum to speed up convergence for geodesically convex loss functions on Riemannian manifolds. Despite the extensive study of momentum-based accelerated variants in Euclidean space and on Riemannian manifolds, there has been limited exploration of such techniques in distributed Riemannian scenarios.

\subsection{Contribution}

This paper presents Riemannian momentum tracking algorithm to address problem (\ref{decentralized}) under a connected graph. Table \ref{summary} provides a summary of existing approaches to solve this problem. Our contributions can be summarized as follows: 
\begin{enumerate}
    \item We propose a novel Riemannian Momentum Tracking (RMTracking) method over a compact submanifold to solve the problem in (\ref{decentralized}). To support this, we develop the Lipschitz-type inequalities for the momentum term on any compact $C^2$-submanifold in Euclidean space.
    \item With a constant step-size $\alpha=\mathcal{O}(1)$, we establish an $\mathcal{O}(\frac{1-\beta}{K})$ convergence rate of the Riemannian gradient average for any momentum weight $\beta \in [0,1)$. When $\alpha$ is sufficiently small, RMTracking can achieve a convergence rate of $\mathcal{O}(\frac{1-\beta}{K})$ to a stationary point.
    \item We validate the effectiveness of RMTracking through numerical experiments on eigenvalue problems, demonstrating its superiority over state-of-the-art algorithms.
\end{enumerate}

\subsection{Notation}

The undirected and connected graph $G=(\mathcal{V},\mathcal{E})$ is defined by the set of agents $\mathcal{V}=\{1,2,\cdots,n\}$ and the set of edges $\mathcal{E}$. The adjacency matrix $W$ of this topological graph satisfies $W_{ij}>0$ and $W_{ij}=W_{ji}$ if there exists an edge $(i,j) \in \mathcal{E}$; otherwise, $W_{ij}=0$. We represent the collection of all local variables $x_i$ as $\mathbf{x}$, stacking them as $\mathbf{x}^\top:=(x_1^\top, \cdots, x_n^\top)$. The function $f(\mathbf{x})$ is defined as the average of local functions: $\frac{1}{n} \sum_{i=1}^n f_i(x_i)$. The $m$-fold Cartesian product of $\mathcal{M}$ is denoted as $\mathcal{M}^n=\mathcal{M} \times \mathcal{M} \times \cdots \times\mathcal{M}$. We use $\emph{I}_d$ to represent the $d \times d$ identity matrix and $\textbf{1}_n \in \mathbb{R}^n$ as a vector with all entries equal to one. Furthermore, we define $\mathbf{W}^t:=W^t \otimes \emph{I}_d$, where $\otimes$ denotes the Kronecker product and $t$ is a positive integer.

\section{Preliminaries}
\subsection{Compact Submanifold}
We define the distance between $x \in \mathbb{R}^{d\times r}$ and $\mathcal{M}$ by
\begin{equation}
\operatorname{dist}(x,\mathcal{M}):=\inf_{y \in \mathcal{M}} \Vert y-x\Vert.
\end{equation}
For any radius $\Gamma>0$, the $\Gamma$-tube around $\mathcal{M}$ can be defined as the neighborhood set:
\begin{equation}
U_{\mathcal{M}}(\Gamma):=\{x: \operatorname{dist}(x,\mathcal{M}) \leq \Gamma \}.
\end{equation}
Furthermore, we can define the nearest-point projection of $x \in \mathbb{R}^{d\times r}$ onto $\mathcal{M}$ by
\begin{equation}
\mathcal{P}_{\mathcal{M}}(x):= \arg \min_{y \in \mathcal{M}} \Vert y - x \Vert.
\end{equation}

\begin{definition}
\label{smooth}
\cite{clarke1995proximal} The \textbf{$\Gamma$-proximally smooth} set on $\mathcal{M}$ satisfies, for $\forall x, y \in U_{\mathcal{M}}(\gamma)$, that \\
(i) For any real $\gamma \in (0,\Gamma)$, the estimate holds:
\begin{equation}
\left\|\mathcal{P}_{\mathcal{M}}(x)-\mathcal{P}_{\mathcal{M}}(y)\right\| \leq \frac{\Gamma}{\Gamma-\gamma}\|x-y\|;
\label{lip_ineq}
\end{equation}
(ii) For any point $x \in \mathcal{M}$ and a normal $v \in N_x \mathcal{M}$, the following inequality holds for all $y \in \mathcal{M}$:
\begin{equation}
\langle v, y-x\rangle \leq \frac{\|v\|}{2 \Gamma}\|y-x\|^2.
\end{equation}
\end{definition}

Based on Definition~\ref{smooth}, it can be said that if a closed set $\mathcal{M}$ is $\Gamma$-proximally smooth, then the projection $\mathcal{P}_{\mathcal{M}}(x)$ is uniquely defined within a neighborhood $\operatorname{dist}(x,\mathcal{M}) < \Gamma$. The Stiefel manifold exhibits $1$-proximally smooth, whereas the Grassmann manifold is $1/\sqrt{2}$-proximally smooth~\cite{balashov2021gradient}. Furthermore, Eq.(\ref{lip_ineq}) can be simplified to the following property
\begin{equation}
\left\|\mathcal{P}_{\mathcal{M}}(x)-\mathcal{P}_{\mathcal{M}}(y)\right\| \leq R \|x-y\|,
\label{property}
\end{equation}
where $R>1$. The inequality will be used to characterize the local convergence of distributed Riemannian gradient methods and associated neighborhood.

\subsection{Consensus Problem}

Let $x_i \in \mathcal{M}$ represent the local variables of each agent $i \in [n]$. Then, the Euclidean average point of $x_1, x_2, \cdots, x_n$ is denoted by
\begin{equation}
    \hat{x}:=\frac{1}{n} \sum_{i=1}^n x_i .
\end{equation}
In Euclidean space, the consensus error can be measured by $\Vert x_i - \hat{x}\Vert$. Over a compact embedded submanifold, it follows from the induced arithmetic mean~\cite{sarlette2009consensus} that:
\begin{equation}
    \bar{x}\in \mathcal{P}_{\mathcal{M}}(\hat{x}).
\end{equation}
The Riemannian gradient of $f_i(x)$, utilizing the induced Riemannian metric induced by the Euclidean inner product $\langle \cdot,\cdot \rangle$, can be expressed as
\begin{equation}
\operatorname{grad} f_i(x)=\mathcal{P}_{T_x \mathcal{M}}(\nabla f_i(x)),
\label{riemann}
\end{equation}
where $\mathcal{P}_{T_x \mathcal{M}}(\cdot)$ is the orthogonal projection onto $T_x \mathcal{M}$~\cite{edelman1998geometry,absil2008optimization}. Subsequently, we define the $\epsilon$-stationary point of distributed problem based on the following definition.

\begin{definition}
\cite{chen2021decentralized} The set of points $\mathbf{x}^\top=(x_1^\top, \cdots, x_n^\top)$ is termed an \textbf{$\epsilon$-stationary} point of distributed problem~(\ref{decentralized}) if the following condition holds:
\begin{equation}
\frac{1}{n} \sum_{i=1}^n\left\|x_i-\bar{x}\right\|^2 \leq \epsilon,  \quad \|\operatorname{grad} f(\bar{x})\|^2 \leq \epsilon ,
\end{equation}
where we have $f(\bar{x})=\frac{1}{n} \sum_{i=1}^n f_i(\bar{x})$.
\label{stationary}
\end{definition}

To reach the stationary point, it is important to address the consensus problem over $\mathcal{M}$, which involves minimizing the following quadratic loss function:
\begin{equation}
\begin{aligned}
&\min \varphi^t(\mathbf{x}):=\frac{1}{4} \sum_{i=1}^n \sum_{j=1}^n W_{i j}^t\left\|x_i-x_j\right\|^2, \\
&\text { s.t. } \quad x_i \in \mathcal{M}, \quad \forall i \in[n] \text {, } \\
\end{aligned}
\label{consensus}
\end{equation}
where the positive integer $t$ represents the $t$-th power of $W$. It is important to highlight that the doubly stochastic matrix $W_{ij}^t$ is calculated by performing $t$ communication steps on $T_x \mathcal{M}$ and satisfies the following assumption.

\begin{assumption}
We assume that for the undirected and connected graph $G$, the doubly stochastic matrix $W$ satisfies: (a)
$W = W^\top$; (b) $0 < W_{ii} < 1$ and $W_{ij} \geq 0$; (c) the eigenvalues of $W$ lie in $(-1, 1]$. Additionally, the second largest
singular value $\sigma_2$ of $W$ falls within $[0, 1)$.
\label{weight}
\end{assumption}

\subsection{Lipschitz Condition}

Throughout the paper, we assume that the local objective function $f_i(x)$ is Lipschitz smooth, which is a standard assumption in the optimization problem~\cite{jorge2006numerical,zeng2018nonconvex,deng2023decentralized}.
\begin{assumption}
For each agent $i$, the local objective function $f_i(x)$ has $L$-Lipschitz continuous gradient
\begin{equation}
\Vert \nabla f_i(x) - \nabla f_i(y) \Vert \leq L \Vert x - y \Vert , \quad i \in [n],
\end{equation}
and let $L_f:= \max_{x \in \mathcal{M}}\Vert \nabla f_i(x) \Vert$. Therefore, $\nabla f(x)$ is also $L$-Lipschitz continuous in Euclidean space and $L_f \geq \max_{x \in \mathcal{M}}\Vert \nabla f(x) \Vert$.
\label{lipschitz}
\end{assumption}

Under Assumption~\ref{lipschitz}, we present a quadratic upper bound of $f_i$:
\begin{equation}
    f_i(y) \leq f_i(x) + \langle \nabla f_i(x), y-x \rangle + \frac{L}{2} \Vert y-x \Vert^2 , \quad i \in [n].
\label{lipschitz2}
\end{equation}
With the properties of projection operators, we can derive two similar Lipschitz inequalities on a compact submanifold as the Euclidean-type ones~\cite{nesterov2013introductory} in the following lemma.

\begin{lemma}
Under Assumption~\ref{lipschitz}, for any $x,y \in \mathcal{M}$, if $f(x)$ is $L$-Lipschitz smooth in Euclidean space, there exists a constant $L_g=L+\frac{1}{\Gamma}L_f$ such that

\begin{equation}
    f_i(y) \leq f_i(x) + \langle \operatorname{grad} f_i(x), y-x \rangle + \frac{L_g}{2} \Vert y-x \Vert^2 ,
\end{equation}

\begin{equation}
    \Vert \operatorname{grad} f_i(x) - \operatorname{grad} f_i(y) \Vert \leq L_g \Vert x-y \Vert , \quad i \in [n].
\end{equation}

\label{lem1}
\end{lemma}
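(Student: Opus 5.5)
We write $\nabla f_i(x)=\operatorname{grad} f_i(x)+v_x$, where $v_x:=\mathcal{P}_{N_x\mathcal{M}}(\nabla f_i(x))$ is the normal component. Both inequalities then follow by treating the tangential part with the Euclidean smoothness of Assumption~\ref{lipschitz} and the normal part with the proximal smoothness of Definition~\ref{smooth}(ii). The bound $\|v_x\|\le\|\nabla f_i(x)\|\le L_f$ holds for $x\in\mathcal{M}$, because an orthogonal projection is nonexpansive.

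\emph{Descent inequality.} We start from the Euclidean quadratic upper bound (\ref{lipschitz2}) and split $\langle \nabla f_i(x),y-x\rangle=\langle \operatorname{grad} f_i(x),y-x\rangle+\langle v_x,y-x\rangle$. Since $x,y\in\mathcal{M}$ and $v_x\in N_x\mathcal{M}$, Definition~\ref{smooth}(ii) gives
\begin{equation*}
\langle v_x,y-x\rangle\le \frac{\|v_x\|}{2\Gamma}\|y-x\|^2\le \frac{L_f}{2\Gamma}\|y-x\|^2 .
\end{equation*}
Adding this to the $\frac{L}{2}\|y-x\|^2$ term gives the constant $\frac{1}{2}(L+\frac{L_f}{\Gamma})=\frac{L_g}{2}$, which is the first claim.

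\emph{Lipschitz bound for the Riemannian gradient.} We write
\begin{equation*}
\operatorname{grad} f_i(x)-\operatorname{grad} f_i(y)=\mathcal{P}_{T_x\mathcal{M}}(\nabla f_i(x)-\nabla f_i(y))+(\mathcal{P}_{T_x\mathcal{M}}-\mathcal{P}_{T_y\mathcal{M}})\nabla f_i(y).
\end{equation*}
The first term is at most $L\|x-y\|$ by nonexpansiveness of the projection and Assumption~\ref{lipschitz}. For the second term we need the projector perturbation bound $\|(\mathcal{P}_{T_x\mathcal{M}}-\mathcal{P}_{T_y\mathcal{M}})u\|\le \frac{1}{\Gamma}\|u\|\,\|x-y\|$ for $x,y\in\mathcal{M}$. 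Applied with $u=\nabla f_i(y)$ and $\|u\|\le L_f$, it gives the remaining $\frac{L_f}{\Gamma}\|x-y\|$.

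\emph{Main obstacle.} The difficulty is proving this projector bound with exactly the constant $1/\Gamma$, since Definition~\ref{smooth}(ii) only controls normals against chords, not tangent spaces against each other. My first attempt is a cleaner route that sidesteps operator norms. Set $w:=\operatorname{grad} f_i(x)-\operatorname{grad} f_i(y)$ and split $w$ into tangential and normal parts at $x$ (or at $y$). The normal part of $w$ at $x$ equals $-\mathcal{P}_{N_x}(\operatorname{grad} f_i(y))$. This is the normal-at-$x$ component of a tangent vector at $y$, which we bound by the curvature/reach of $\mathcal{M}$: the reach is at least $\Gamma$, so the normal spaces turn at rate at most $1/\Gamma$ along $\mathcal{M}$. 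In effect, one applies (ii) at both points with $v=v_x$ and $v=v_y$ and sums, obtaining
\begin{equation*}
\langle v_x-v_y,\,y-x\rangle\le \frac{\|v_x\|+\|v_y\|}{2\Gamma}\|x-y\|^2 .
\end{equation*}
This controls the normal component difference along the chord direction, and then a symmetric argument handles the full vector.

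If the constant-exact argument fails, I would fall back on the known result that for a compact $C^2$ submanifold with reach $\ge\Gamma$ the second fundamental form has norm $\le 1/\Gamma$. Integrating the derivative of $x\mapsto \mathcal{P}_{T_x\mathcal{M}}$ along a curve in $\mathcal{M}$ then gives the bound, at the cost of a possibly larger constant (a geodesic-versus-chord length factor). I expect this step to be the only genuinely nontrivial part; the rest is bookkeeping.
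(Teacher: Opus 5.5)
\textbf{The proof of the second inequality has a gap.} The paper gives no argument of its own; it only cites \cite{deng2023decentralized}.

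Your proof of the descent inequality is correct and complete. You split off the normal part $v_x$ of $\nabla f_i(x)$, bound $\|v_x\|\le L_f$, and apply Definition~\ref{smooth}(ii). This produces exactly $L_g=L+L_f/\Gamma$.

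Your reduction of the second inequality to the projector estimate $\|\mathcal{P}_{T_x\mathcal{M}}-\mathcal{P}_{T_y\mathcal{M}}\|\le\|x-y\|/\Gamma$ is also correct. But that estimate is the entire content of the second claim, and your proposal does not establish it.

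Your ``first attempt'' cannot work. Adding (ii) at $x$ and at $y$ gives a single one-sided scalar inequality for $\langle v_x-v_y,y-x\rangle$, and this falls short in three ways:
\begin{itemize}
\item it has no matching lower bound;
\item it says nothing about components orthogonal to the chord $y-x$;
\item it concerns the normal parts of two different vectors $\nabla f_i(x)$ and $\nabla f_i(y)$, not $(\mathcal{P}_{T_x\mathcal{M}}-\mathcal{P}_{T_y\mathcal{M}})u$ for a fixed $u$.
\end{itemize}
No symmetrization turns this into an operator-norm bound.

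Your fallback (second fundamental form bounded by $1/\Gamma$, integrated along a geodesic) is sound in spirit. But, as you say, it only yields $L+cL_f/\Gamma$ with $c>1$, so it does not prove the lemma with the stated constant.

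\textbf{The missing ingredient} is the tangent-space variation bound for sets of positive reach. Proximal smoothness with constant $\Gamma$ means reach at least $\Gamma$. For such sets, Boissonnat, Lieutier and Wintraecken (2019) show that the largest principal angle $\theta$ between $T_x\mathcal{M}$ and $T_y\mathcal{M}$ satisfies $\sin(\theta/2)\le\|x-y\|/(2\Gamma)$. The two tangent spaces have equal dimension, so
\[
\|\mathcal{P}_{T_x\mathcal{M}}-\mathcal{P}_{T_y\mathcal{M}}\|=\sin\theta\le 2\sin(\theta/2)\le\|x-y\|/\Gamma,
\]
which is exactly the constant you need.

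Your fallback can be repaired to give the same constant:
\begin{itemize}
\item use the curvature bound to control the angle rather than the projector difference, $\theta\le d_{\mathcal{M}}(x,y)/\Gamma$;
\item combine this with the chord--geodesic comparison $d_{\mathcal{M}}(x,y)\le 2\Gamma\arcsin(\|x-y\|/(2\Gamma))$, valid for $\|x-y\|<2\Gamma$;
\item only then take the sine.
\end{itemize}
The loss in your version comes from measuring path length in projector space instead of angle.

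Pairs with $\|x-y\|\ge 2\Gamma$ are covered by the trivial bound $\|\mathcal{P}_{T_x\mathcal{M}}-\mathcal{P}_{T_y\mathcal{M}}\|\le 1$. This also handles different connected components. Applying (ii) at a closest pair of points between two components shows they are at least $2\Gamma$ apart, so the geodesic argument never has to cross components.
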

\begin{proof}
The proofs can be found in \cite{deng2023decentralized}.
\end{proof}

\section{Distributed Momentum Algorithms over a Compact Submanifold}

In this section, we present two distributed momentum algorithms over a compact submanifold, Distributed Riemannian Gradient Tracking with Momentum (DRGTM) and Riemannian Momentum Tracking (RMTracking) for solving the problem~(\ref{decentralized}).

By incorporating gradient tracking~\cite{qu2019accelerated,chen2021decentralized,deng2023decentralized}, the distributed algorithm introduces an auxiliary variable $s_{i,k}$ for each agent $i$ at each iteration. This variable is designed to track the average of the gradients $\nabla f_i(x_{i,k})$ for all agents $i=1,\cdots,n$. If $x_{i,k}$ converges to some point $x_{\infty}$, then $s_{i,k}$ converges to $\frac{1}{n} \sum_{i=1}^n \nabla f_i(x_{\infty})$. Specifically, each agent $i \in [n]$ utilizes this auxiliary variable $s_{i,k}$ to asymptotically track the average descent direction in distributed scenarios, which is updated recursively as follows:
\begin{equation}
    s_{i,k+1}=\sum_{j=1}^n W_{i j}^t s_{j,k} + \nabla f_i(x_{i,k+1}) - \nabla f_i(x_{i,k}) .
\end{equation}

A crucial advantage of gradient tracking-type methods lies in the applicability of a constant step-size $\alpha$. Given the communication network with an adjacency matrix $W$, each agent $i \in [n]$ can update $x_{i,k}$ in Euclidean space by:
\begin{equation}
    x_{i,k+1}=\sum_{j=1}^n W_{i j}^t x_{j,k}-\alpha s_{i,k},
\end{equation}
where $\alpha > 0$ is the step-size. Similar to the setting~\cite{deng2023decentralized}, we use the projection operator to compute the Riemannian gradient update, for all $i \in [n]$,
\begin{equation}
\begin{aligned}
    x_{i,k+1}& =\mathcal{P}_{\mathcal{M}}\left(\sum_{j=1}^n W_{i j}^t x_{j,k}-\alpha \mathcal{P}_{T_{x_{i,k}}\mathcal{M}}(s_{i,k})\right), \\
    s_{i,k+1}&=\sum_{j=1}^n W_{i j}^t s_{j,k} + \operatorname{grad} f_i(x_{i,k+1}) - \operatorname{grad} f_i(x_{i,k}) .
\end{aligned}
\end{equation}
In this case, Riemannian gradient is computed through Eq.(\ref{riemann}), and the projection of the variable $s_{i,k}$ to the tangent space $T_{x_{i,k}} \mathcal{M}$ is used in updating $x_{i,k+1}$.

\subsection{Algorithm DRGTM}

In distributed optimization, a straightforward approach using momentum is distributed SGD with momentum (DSGDM) \cite{takezawa2023momentum}. In Euclidean space, DSGDM updates two variables $m_{i,k}$, $x_{i,k}$ as follows:
\begin{equation}
\begin{aligned}
    m_{i,k+1} &= \beta m_{i,k} + \nabla f_i(x_{i,k}), \\
    x_{i,k+1}&=\sum_{j=1}^n W_{i j}^t x_{j,k}-\alpha m_{i,k+1}, 
\end{aligned}
\end{equation}
where $m_{i,k}$ is the local momentum of node $i$ and $\beta \in [0,1)$ is a  momentum weight. By applying the gradient tracking, we further update the rule as follows:
\begin{equation}
\begin{aligned}
    m_{i,k+1} &= \beta m_{i,k} + s_{i,k}, \\
    x_{i,k+1}&=\sum_{j=1}^n W_{i j}^t x_{j,k}-\alpha m_{i,k+1},  \\
    s_{i,k+1} &=\sum_{j=1}^n W_{i j}^t s_{j,k} + \nabla f_i(x_{i,k+1}) - \nabla f_i(x_{i,k}) .
\end{aligned}
\end{equation}

To accelerate the gradient tracking on a compact submanifold, the distributed Riemannian gradient tracking with momentum can be, for all $i \in [n]$,
\begin{equation}
\begin{aligned}
    m_{i,k+1}& = \beta m_{i,k} + \mathcal{P}_{T_{x_{i,k}}\mathcal{M}}(s_{i,k}), \\
    x_{i,k+1}& =\mathcal{P}_{\mathcal{M}}\left(\sum_{j=1}^n W_{i j}^t x_{j,k}-\alpha m_{i,k+1} \right), \\
    s_{i,k+1}&=\sum_{j=1}^n W_{i j}^t s_{j,k} + \operatorname{grad} f_i(x_{i,k+1}) - \operatorname{grad} f_i(x_{i,k}) .
\end{aligned}
\label{momentum1}
\end{equation}

\subsection{Algorithm RMTracking}

\begin{algorithm}[htbp]
	\caption{Riemannian Momentum Tracking (RMTracking)}
	\label{alg2}
	\begin{algorithmic}[1]
		\REQUIRE
		Initial point $\mathbf{x}_0 \in \mathcal{M}^n$, an integer $t$, the step-size $\alpha$, the momentum weight $\beta$. Let $s_{i,0}=\operatorname{grad} f_i(x_{i,0})$. \\
            \algorithmiccomment{for each node $i \in [n]$, in parallel}
		\FOR{$k=0,\cdots$} 
		\STATE Project onto tangent space $v_{i,k}=\mathcal{P}_{T_{x_{i,k}}\mathcal{M}}(s_{i,k})$ 
        \STATE Update $x_{i,k+1}=\mathcal{P}_{\mathcal{M}}\left(\sum_{j=1}^n W_{i j}^t x_{j,k}-\alpha v_{i,k}\right)$
        \STATE Momentum update $m_{i,k+1} = \beta m_{i,k} + \operatorname{grad} f_i(x_{i,k})$
        \STATE Riemannian momentum tracking \\ $s_{i,k+1}=\sum_{j=1}^n W_{i j}^t s_{j,k} + m_{i,k+1} - m_{i,k}$
		\ENDFOR
	\end{algorithmic}
\end{algorithm}

However, the DRGTM algorithm applies the momentum term after Riemannian gradient tracking, which can cause the momentum-incurred bias in updating the variable $x_{i,k+1}$ \cite{yuan2021decentlam}. Inspired by Momentum Tracking \cite{takezawamomentum}, we suggest swapping the order of Riemannian gradient tracking and momentum term to mitigate the inconsistency bias through gradient tracking-like techniques. Specifically, we introduce Riemannian Momentum Tracking, which utilizes an auxiliary variable $m_{i,k}$ to track the average momentum parameter based on Riemannian gradient $\operatorname{grad} f_i(x_{i,k})$, for all $i \in [n]$,
\begin{equation}
\begin{aligned}
    x_{i,k+1}& =\mathcal{P}_{\mathcal{M}}\left(\sum_{j=1}^n W_{i j}^t x_{j,k}-\alpha \mathcal{P}_{T_{x_{i,k}}\mathcal{M}}(s_{i,k}) \right), \\
    m_{i,k+1}& = \beta m_{i,k} + \operatorname{grad} f_i(x_{i,k}), \\
    s_{i,k+1}&=\sum_{j=1}^n W_{i j}^t s_{j,k} + m_{i,k+1} -  m_{i,k} .
\end{aligned}
\label{momentum}
\end{equation}
Although this adjustment merely reorders the application of the momentum term, it is a critical step in improving the convergence rate compared to DRGTM. Therefore, the following theoretical analysis will focus on RMTracking. The detailed description is presented in Algorithm~\ref{alg2}.

\section{Convergence Analysis of RMTracking}

\subsection{Technical Lemmas}

According to~\cite{chen2024decentralized}, the analysis of RMTracking is restricted to the neighborhood $\mathcal{N}:=\{\mathbf{x}: \Vert \hat{x}-\bar{x} \Vert \leq \Gamma/2\}$. This means $\mathbf{x}_k \in \mathcal{N}$ when $t$ and $\alpha$ satisfy certain conditions.

For ease of notation, let us denote the definition of variables
\[
\begin{aligned}
    & g_{i,k} :=  \operatorname{grad} f_i (x_{i,k}), \;\; \hat{g}_k := \frac{1}{n} \sum_{i=1}^n \operatorname{grad} f_i (x_{i,k}), \\
    & \mathbf{v}_k :=[v_{1,k}^\top, \cdots, v_{n,k}^\top]^\top, \;\; \mathbf{s}_k :=[s_{1,k}^\top, \cdots, s_{n,k}^\top]^\top, \\
    & \mathbf{m}_k :=[m_{1,k}^\top, \cdots, m_{n,k}^\top]^\top, \;\; \hat{m}_{k} := \frac{1}{n} \sum_{i=1}^n m_{i,k} .
\end{aligned}
\]
Without loss of generality, it follows from Eq.(\ref{momentum}) that we can take $m_{i,0}=0$, and express the momentum term $m_{i,k}$ as
\begin{equation}
    m_{i,k} = \sum_{j=0}^{k-1} \beta^{k-j-1} g_{i,k}.
\label{m_s}
\end{equation}
It holds that $m_{i,k} \leq \frac{1}{1-\beta} g_{i,k}$, where we use $\sum_{j=0}^{k-1} \beta^{k-j-1} \leq \frac{1}{1-\beta}$. First, we show that the distance $ \Vert \bar{x}_{k+1}-\bar{x}_k \Vert$ is bounded by the consensus error and $\Vert \mathbf{v}_{k} \Vert$.

\begin{lemma}
\label{keyx}
    Let $x_{i,k+1}=\mathcal{P}_{\mathcal{M}}\left(\sum_{j=1}^n W_{i j}^t x_{j,k}-\alpha v_{i,k}\right)$, where $v_{i,k} \in T_{x_{i,k}} \mathcal{M}$. Suppose that Assumption~\ref{weight} holds. It holds that
    \begin{equation}
    \begin{aligned}
    & \Vert \bar{x}_{k+1} - \bar{x}_k \Vert \leq \frac{8Q+\sqrt{n} F + M}{n} \Vert \mathbf{x}_k - \bar{\mathbf{x}}_k \Vert^2 \\
    & + \frac{2 Q \alpha^2}{n}\Vert \mathbf{v}_{k} \Vert^2 +\alpha \Vert \hat{v}_{k} \Vert  + \frac{M}{n} \Vert \mathbf{x}_{k+1} - \bar{\mathbf{x}}_{k+1} \Vert^2.
    \end{aligned}
    \end{equation}
\end{lemma}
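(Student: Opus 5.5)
This bound is essentially the analogue of the corresponding estimate in \cite{deng2023decentralized} for DPRGT. The constants $Q$, $F$, $M$ come from second-order properties of the projection $\mathcal{P}_{\mathcal{M}}$ on a compact $C^2$-submanifold. I would use the following facts, valid in a tube around $\mathcal{M}$. First, $\mathcal{P}_{\mathcal{M}}$ is $C^1$ near $\mathcal{M}$ with $D\mathcal{P}_{\mathcal{M}}(x)=\mathcal{P}_{T_x\mathcal{M}}$ for $x\in\mathcal{M}$, so that
\[
\Vert \mathcal{P}_{\mathcal{M}}(x+u)-x-\mathcal{P}_{T_x\mathcal{M}}(u)\Vert\le Q\Vert u\Vert^2 ,\qquad x\in\mathcal{M}.
\]
Second, for points of $\mathcal{M}$ the Euclidean mean and the induced mean are close to second order:
\[
\Vert \hat{x}-\bar{x}\Vert\le \frac{M}{n}\Vert\mathbf{x}-\bar{\mathbf{x}}\Vert^2 .
\]
Third, the normal part of a displacement between manifold points is quadratic, $\Vert \mathcal{P}_{N_x\mathcal{M}}(y-x)\Vert\le F\Vert y-x\Vert^2$, which follows from Definition~\ref{smooth}(ii).

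The plan is to split $\bar{x}_{k+1}-\bar{x}_k$ via the Euclidean means:
\[
\bar{x}_{k+1}-\bar{x}_k=(\bar{x}_{k+1}-\hat{x}_{k+1})+(\hat{x}_{k+1}-\hat{x}_k)+(\hat{x}_k-\bar{x}_k).
\]
The first and last terms are handled directly by the mean-closeness estimate. They produce $\frac{M}{n}\Vert\mathbf{x}_{k+1}-\bar{\mathbf{x}}_{k+1}\Vert^2$ and the $\frac{M}{n}\Vert\mathbf{x}_k-\bar{\mathbf{x}}_k\Vert^2$ piece.

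For the middle term I would write $x_{i,k+1}=\mathcal{P}_{\mathcal{M}}(x_{i,k}+u_i)$ with
\[
u_i=\sum_j W^t_{ij}(x_{j,k}-x_{i,k})-\alpha v_{i,k}.
\]
The second-order expansion then gives
\[
x_{i,k+1}-x_{i,k}=\mathcal{P}_{T_{x_{i,k}}\mathcal{M}}\Big(\sum_j W^t_{ij}(x_{j,k}-x_{i,k})\Big)-\alpha v_{i,k}+e_i ,\qquad \Vert e_i\Vert\le Q\Vert u_i\Vert^2 .
\]
Here I used that $v_{i,k}$ is tangent, so the projection leaves it unchanged. Using $\Vert u_i\Vert^2\le 2\Vert\sum_jW^t_{ij}(x_{j,k}-x_{i,k})\Vert^2+2\alpha^2\Vert v_{i,k}\Vert^2$, averaging over $i$, and applying Jensen with double stochasticity gives
\[
\sum_i\Big\Vert\sum_jW^t_{ij}(x_{j,k}-x_{i,k})\Big\Vert^2\le \sum_{i,j}W^t_{ij}\Vert x_{j,k}-x_{i,k}\Vert^2\le 4\Vert\mathbf{x}_k-\bar{\mathbf{x}}_k\Vert^2 .
\]
This yields the $\frac{8Q}{n}\Vert\mathbf{x}_k-\bar{\mathbf{x}}_k\Vert^2+\frac{2Q\alpha^2}{n}\Vert\mathbf{v}_k\Vert^2$ terms. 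The $-\alpha v_{i,k}$ part averages to $-\alpha\hat v_k$, which gives $\alpha\Vert\hat v_k\Vert$.

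The main obstacle is the remaining tangential consensus term
\[
\frac1n\sum_i\mathcal{P}_{T_{x_{i,k}}\mathcal{M}}\Big(\sum_jW^t_{ij}(x_{j,k}-x_{i,k})\Big).
\]
Without the projections this sum vanishes by symmetry of $W$. With them, I rewrite $\mathcal{P}_{T}=I-\mathcal{P}_{N}$, so the identity part cancels because $\sum_{i,j}W^t_{ij}(x_j-x_i)=0$. What remains is $-\frac1n\sum_{i,j}W^t_{ij}\mathcal{P}_{N_{x_{i,k}}\mathcal{M}}(x_{j,k}-x_{i,k})$. By the normal-quadratic bound its norm is at most $\frac{F}{n}\sum_{i,j}W^t_{ij}\Vert x_{j,k}-x_{i,k}\Vert^2$.

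To match the stated $\frac{\sqrt n F}{n}$ coefficient, I would instead bound this via Cauchy--Schwarz across agents. That is, I use $\frac1n\sum_i a_i\le \frac{1}{\sqrt n}\big(\sum_i a_i^2\big)^{1/2}$ together with a per-agent bound of the normal component by $F$ times the consensus error. This step, choosing the right bookkeeping so the constant comes out as $\sqrt n F$, is where I expect the most care to be needed. Collecting the three contributions gives the claim.
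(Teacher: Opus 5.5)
You follow the paper's proof for everything except one sub-step, and that sub-step is correct. Your decomposition through the Euclidean means, your use of Lemma~\ref{m} for the two outer pieces, and your $Q$-expansion with $u_i=-(\nabla\varphi_i^t(\mathbf{x}_k)+\alpha v_{i,k})$ coincide with the paper's steps. The paper gets the factor $4$ from $\Vert (I_{nd}-\mathbf{W}^t)(\mathbf{x}_k-\bar{\mathbf{x}}_k)\Vert\le 2\Vert\mathbf{x}_k-\bar{\mathbf{x}}_k\Vert$ rather than from Jensen, which is immaterial.

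The genuine difference is the tangential consensus term $\frac1n\sum_i\operatorname{grad}\varphi_i^t(\mathbf{x}_k)$. The paper proves nothing here; it quotes Lemma~\ref{f} from \cite{deng2023decentralized}, with $F=2\max\Vert D^2\mathcal{P}_{T_x\mathcal{M}}(\cdot)\Vert$ and a $\sqrt n$ in the bound. You prove the estimate directly, and your argument is sound. The identity part cancels by double stochasticity. Choosing $v=\mathcal{P}_{N_x\mathcal{M}}(y-x)$ in Definition~\ref{smooth}(ii) gives $\Vert\mathcal{P}_{N_x\mathcal{M}}(y-x)\Vert\le\frac{1}{2\Gamma}\Vert y-x\Vert^2$. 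So the term is at most $\frac{1}{2\Gamma n}\sum_{i,j}W^t_{ij}\Vert x_{i,k}-x_{j,k}\Vert^2\le\frac{2}{\Gamma n}\Vert\mathbf{x}_k-\bar{\mathbf{x}}_k\Vert^2$.

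This is self-contained and carries no $\sqrt n$, so it is a better-scaling replacement for Lemma~\ref{f}. The only cost is that your constant is $2/\Gamma$ rather than the paper's $F$.

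For that reason, drop your final paragraph rather than refine it. Cauchy--Schwarz across agents can only weaken the bound you already have. A per-agent normal bound that is linear in the consensus error would not by itself produce a quadratic term.

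The lemma then holds with $\sqrt n F$ replaced by $2/\Gamma$, or by $\sqrt n\cdot 2/\Gamma$ if you want the displayed shape, since $1\le\sqrt n$. Nothing downstream (Lemma~\ref{last}, Theorem~\ref{final}) depends on the specific value of that constant.
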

\begin{proof}
Since $\Vert \nabla \varphi^t(\mathbf{x}_k)\Vert=\Vert (\emph{I}_{nd}-\mathbf{W}^t)\mathbf{x}_k \Vert=\Vert (\emph{I}_{nd}-\mathbf{W}^t)(\mathbf{x}_k-\bar{\mathbf{x}}_k) \Vert \leq 2\Vert \mathbf{x}_k-\bar{\mathbf{x}}_k \Vert$, we have
\begin{equation}
\begin{aligned}
    & \left\|\hat{x}_{k+1}-\hat{x}_k\right\| \\
    & \leq\left\|\hat{x}_{k+1}-\hat{x}_k+\frac{1}{n} \sum_{i=1}^n\left(\operatorname{grad} \varphi_i^t\left(\mathbf{x}_k\right)+\alpha v_{i, k}\right)\right\| \\
    & +\left\|\frac{1}{n} \sum_{i=1}^n\left(\operatorname{grad} \varphi_i^t\left(\mathbf{x}_k\right)+\alpha v_{i, k}\right)\right\| \\
    & \leq \frac{Q}{n} \sum_{i=1}^n\left\|\nabla \varphi_i^t\left(\mathbf{x}_k\right)+\alpha v_{i, k}\right\|^2 \\
    & +\left\|\frac{1}{n} \sum_{i=1}^n \operatorname{grad} \varphi_i^t\left(\mathbf{x}_k\right)\right\|+\alpha \Vert \hat{v}_{k} \Vert \\
    & \leq \frac{2 Q}{n}\left\|\nabla \varphi^t(\mathbf{x}_k)\right\|^2+\frac{2 Q \alpha^2}{n}\left\|\mathbf{v}_{k}\right\|^2 \\
    & +\left\|\frac{1}{n} \sum_{i=1}^n \operatorname{grad} \varphi_i^t\left(\mathbf{x}_k\right)\right\|+\alpha \Vert \hat{v}_{k} \Vert \\
    & \leq \frac{8Q+\sqrt{n} F}{n} \Vert \mathbf{x}_k - \bar{\mathbf{x}}_k \Vert^2 + \frac{2 Q \alpha^2}{n}\Vert \mathbf{v}_{k} \Vert^2+\alpha \Vert \hat{v}_{k} \Vert ,
\end{aligned}
\label{y_x}
\end{equation}
where the second inequality uses the fact that $\Vert\mathcal{P}_{\mathcal{M}}(x+u)-x -\mathcal{P}_{T_x \mathcal{M}}(u) \Vert \leq Q\Vert u \Vert^2$ proposed by~\cite{deng2023decentralized} and the fourth inequality uses Lemma~\ref{f}. Under Lemma~\ref{m}, it holds that
\begin{equation}
\begin{aligned}
    & \Vert \bar{x}_{k+1} - \bar{x}_k \Vert \\
    & \leq \Vert \hat{x}_{k+1} - \hat{x}_k \Vert + \Vert \hat{x}_k - \bar{x}_k \Vert + \Vert \hat{x}_{k+1} - \bar{x}_{k+1} \Vert \\
    & \leq \frac{8Q+\sqrt{n} F + M}{n} \Vert \mathbf{x}_k - \bar{\mathbf{x}}_k \Vert^2 + \frac{2 Q \alpha^2}{n}\Vert \mathbf{v}_{k} \Vert^2 \\
    & +\alpha \Vert \hat{v}_{k} \Vert + \frac{M}{n} \Vert \mathbf{x}_{k+1} - \bar{\mathbf{x}}_{k+1} \Vert^2.
\end{aligned}
\end{equation}
The proof is completed.
\end{proof}

Next, we show the boundedness of consensus error.

\begin{lemma}
\label{key1}
    Under Assumptions~\ref{weight} and \ref{lipschitz}. Let $\{\mathbf{x}_k\}$ be generated by Algorithm~\ref{alg2}. For all $k$, $\mathbf{x}_k \in \mathcal{N}$, it holds that
    \begin{equation}
        \Vert \mathbf{x}_{k+1} - \bar{\mathbf{x}}_{k+1} \Vert \leq \sigma_2^t R \Vert \mathbf{x}_{k} - \bar{\mathbf{x}}_{k} \Vert + \alpha R \Vert \mathbf{v}_{k} \Vert .
    \end{equation}
\end{lemma}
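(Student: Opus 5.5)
The plan is to compare $\mathbf{x}_{k+1}$ not with its own induced mean $\bar{\mathbf{x}}_{k+1}$ directly, but with a convenient point of the form $\mathbf{1}_n\otimes y$ with $y\in\mathcal{M}$. The natural choice is $y=\bar{x}_k$. This works because $\bar{x}_{k+1}$ is the projection of $\hat{x}_{k+1}$, and for any $y$ we have $\Vert \mathbf{x}_{k+1}-\mathbf{1}_n\otimes \hat{x}_{k+1}\Vert\le \Vert \mathbf{x}_{k+1}-\mathbf{1}_n\otimes y\Vert$, since the Euclidean mean minimizes the sum of squared distances.

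The first step is to make sure the induced mean itself can be handled the same way. I would either use the definition of $\bar{\mathbf{x}}_{k+1}$ as the minimizer of $\sum_i\Vert x_{i,k+1}-y\Vert^2$ over $y\in\mathcal{M}$, which is equivalent to projecting $\hat{x}_{k+1}$ onto $\mathcal{M}$ because $\sum_i\Vert x_i-y\Vert^2=n\Vert \hat x-y\Vert^2+\text{const}$. This gives directly
\[
\Vert \mathbf{x}_{k+1}-\bar{\mathbf{x}}_{k+1}\Vert\le \Vert \mathbf{x}_{k+1}-\bar{\mathbf{x}}_k\Vert .
\]

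The second step writes each component using $\bar{x}_k=\mathcal{P}_{\mathcal{M}}(\bar{x}_k)$, since $\bar{x}_k\in\mathcal{M}$. The $R$-Lipschitz property (\ref{property}) of the projection then gives
\[
\Vert x_{i,k+1}-\bar{x}_k\Vert\le R\Big\Vert \sum_j W^t_{ij}x_{j,k}-\alpha v_{i,k}-\bar{x}_k\Big\Vert .
\]
Stacking over $i$ and using the triangle inequality yields
\[
\Vert \mathbf{x}_{k+1}-\bar{\mathbf{x}}_k\Vert\le R\Vert \mathbf{W}^t\mathbf{x}_k-\bar{\mathbf{x}}_k\Vert+\alpha R\Vert\mathbf{v}_k\Vert .
\]

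The third step handles the consensus term. Since $W^t$ is doubly stochastic, $\mathbf{W}^t\bar{\mathbf{x}}_k=\bar{\mathbf{x}}_k$. Hence
\[
\mathbf{W}^t\mathbf{x}_k-\bar{\mathbf{x}}_k=\mathbf{W}^t(\mathbf{x}_k-\hat{\mathbf{x}}_k)+(\hat{\mathbf{x}}_k-\bar{\mathbf{x}}_k),
\]
using $\mathbf{W}^t\hat{\mathbf{x}}_k=\hat{\mathbf{x}}_k$. Because $\mathbf{x}_k-\hat{\mathbf{x}}_k$ is orthogonal to the consensus subspace, the first term is bounded by $\sigma_2^t\Vert\mathbf{x}_k-\hat{\mathbf{x}}_k\Vert\le\sigma_2^t\Vert\mathbf{x}_k-\bar{\mathbf{x}}_k\Vert$.

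The main obstacle is the residual $\hat{\mathbf{x}}_k-\bar{\mathbf{x}}_k$, which does not contract. To avoid it, I would compare with $\mathbf{1}\otimes\hat{x}_k$ rather than $\bar{x}_k$, writing $\hat{x}_k$ in place of the anchor. But $\hat{x}_k\notin\mathcal{M}$, so $\mathcal{P}_{\mathcal{M}}(\hat{x}_k)=\bar{x}_k$, and one only obtains
\[
\Vert x_{i,k+1}-\bar{x}_k\Vert\le R\Big\Vert\sum_jW^t_{ij}x_{j,k}-\alpha v_{i,k}-\hat{x}_k\Big\Vert .
\]
This inequality is still valid by (\ref{property}), since $\mathbf{x}_k\in\mathcal{N}$ keeps both arguments in the tube where $\mathcal{P}_{\mathcal{M}}$ is $R$-Lipschitz.

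With this choice the right-hand side becomes $R\Vert\mathbf{W}^t(\mathbf{x}_k-\hat{\mathbf{x}}_k)\Vert+\alpha R\Vert\mathbf{v}_k\Vert$. This is at most $\sigma_2^tR\Vert\mathbf{x}_k-\hat{\mathbf{x}}_k\Vert+\alpha R\Vert\mathbf{v}_k\Vert$, and the final reduction $\Vert\mathbf{x}_k-\hat{\mathbf{x}}_k\Vert\le\Vert\mathbf{x}_k-\bar{\mathbf{x}}_k\Vert$ completes the claimed bound.

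One point needs checking in the write-up: that $\sum_j W^t_{ij}x_{j,k}-\alpha v_{i,k}$ and $\hat{x}_k$ lie within distance $\gamma<\Gamma$ of $\mathcal{M}$. This is the role of the neighborhood assumption $\mathbf{x}_k\in\mathcal{N}$ together with the implicit smallness conditions on $\alpha$ and $t$.
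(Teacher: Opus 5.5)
Your argument is correct and is essentially the paper's proof. Both bound $\Vert\mathbf{x}_{k+1}-\bar{\mathbf{x}}_{k+1}\Vert$ by $\Vert\mathbf{x}_{k+1}-\bar{\mathbf{x}}_k\Vert$ via optimality of the induced mean, write $\bar{x}_k=\mathcal{P}_{\mathcal{M}}(\hat{x}_k)$, apply (\ref{property}), and contract $\mathbf{W}^t(\mathbf{x}_k-\hat{\mathbf{x}}_k)$ by $\sigma_2^t$ before using $\Vert\mathbf{x}_k-\hat{\mathbf{x}}_k\Vert\le\Vert\mathbf{x}_k-\bar{\mathbf{x}}_k\Vert$; the paper also leaves implicit the tube-membership of $\sum_j W^t_{ij}x_{j,k}-\alpha v_{i,k}$ that you flag, so your write-up is slightly more careful there.
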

\begin{proof}
Let $\mathcal{P}_{\mathcal{M}^n}(\mathbf{x})^\top=[\mathcal{P}_{\mathcal{M}}(x_1)^\top,\cdots,\mathcal{P}_{\mathcal{M}}(x_n)^\top]$. By the definition of $\bar{\mathbf{x}}_{k}$, we have
\begin{equation}
\begin{aligned}
    & \Vert \mathbf{x}_{k+1} - \bar{\mathbf{x}}_{k+1} \Vert \leq \Vert  \mathbf{x}_{k+1} - \bar{\mathbf{x}}_{k} \Vert \\
    & = \Vert  \mathcal{P}_{\mathcal{M}^n}\left(\mathbf{W}^t \mathbf{x}_{k}-\alpha \mathbf{v}_{k}\right) - \mathcal{P}_{\mathcal{M}^n}\left(\hat{\mathbf{x}}_{k}\right) \Vert \\
    & \leq R \left\| \mathbf{W}^t \mathbf{x}_{k}-\alpha \mathbf{v}_{k} - \hat{\mathbf{x}}_{k} \right\| \\
    & \leq \sigma_2^t R \Vert \mathbf{x}_{k} - \bar{\mathbf{x}}_{k} \Vert + \alpha R \Vert \mathbf{v}_{k} \Vert ,
\end{aligned}
\label{step2}
\end{equation}
where the first inequality follows from the optimality of $\bar{\mathbf{x}}_{k+1}$ and the second inequality utilizes Eq.(\ref{property}). The proof is completed.
\end{proof}

With the above lemma, we yield the relationship between the step-size and consensus error.

\begin{figure*}[ht]
	\centering
	\begin{minipage}{0.44\linewidth}
		\centering
		\includegraphics[width=1\linewidth]{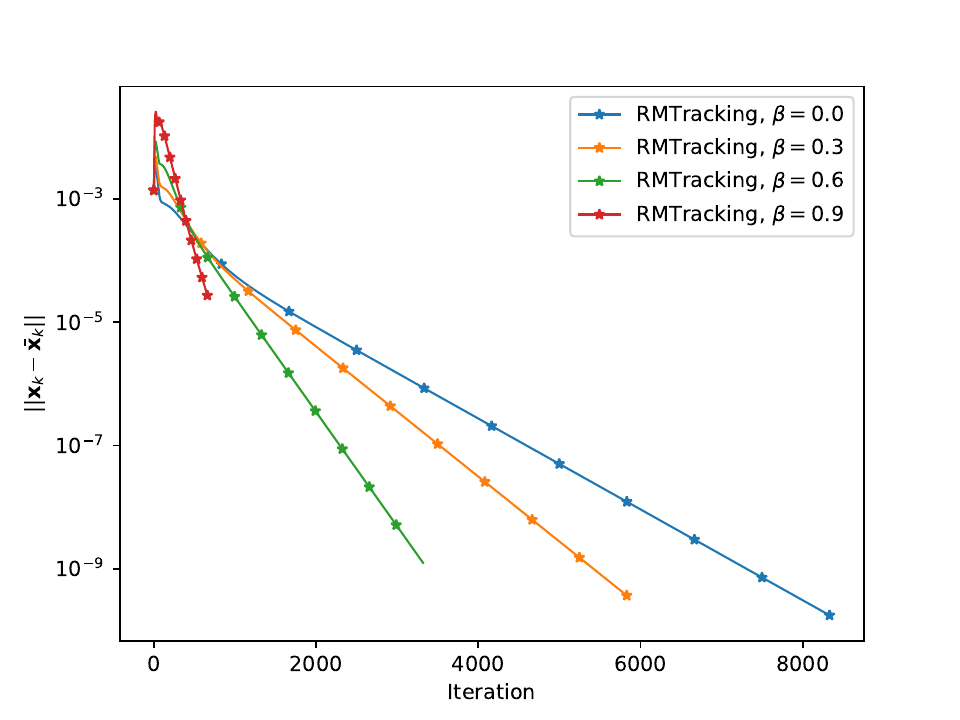}
	\end{minipage}
	\centering
	\begin{minipage}{0.44\linewidth}
		\centering
		\includegraphics[width=1\linewidth]{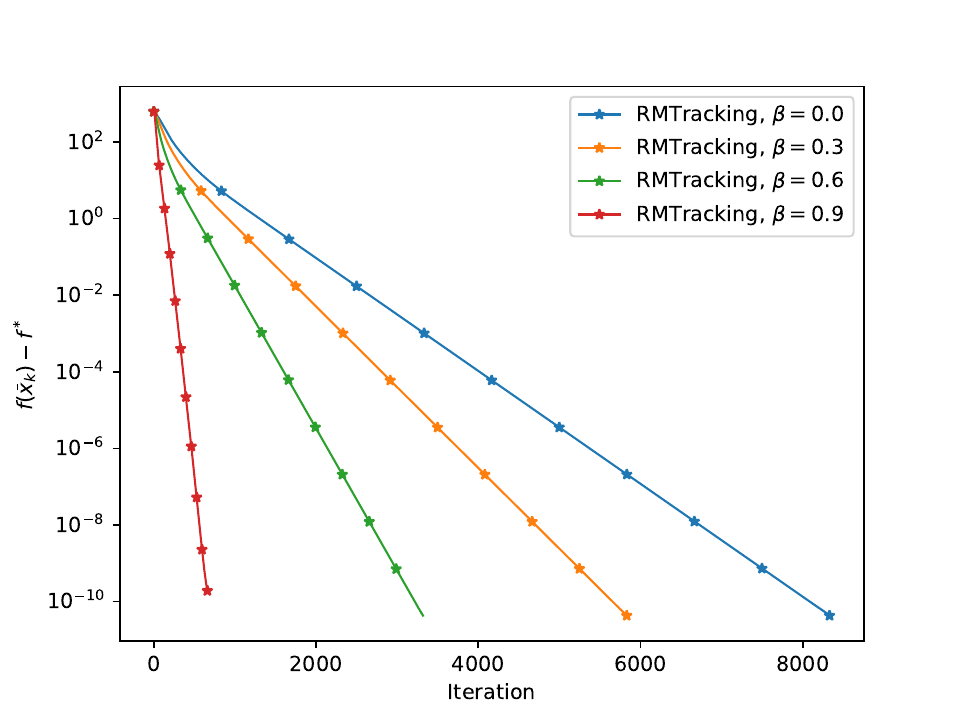}
	\end{minipage}
        \centering
	\begin{minipage}{0.44\linewidth}
		\centering
		\includegraphics[width=1\linewidth]{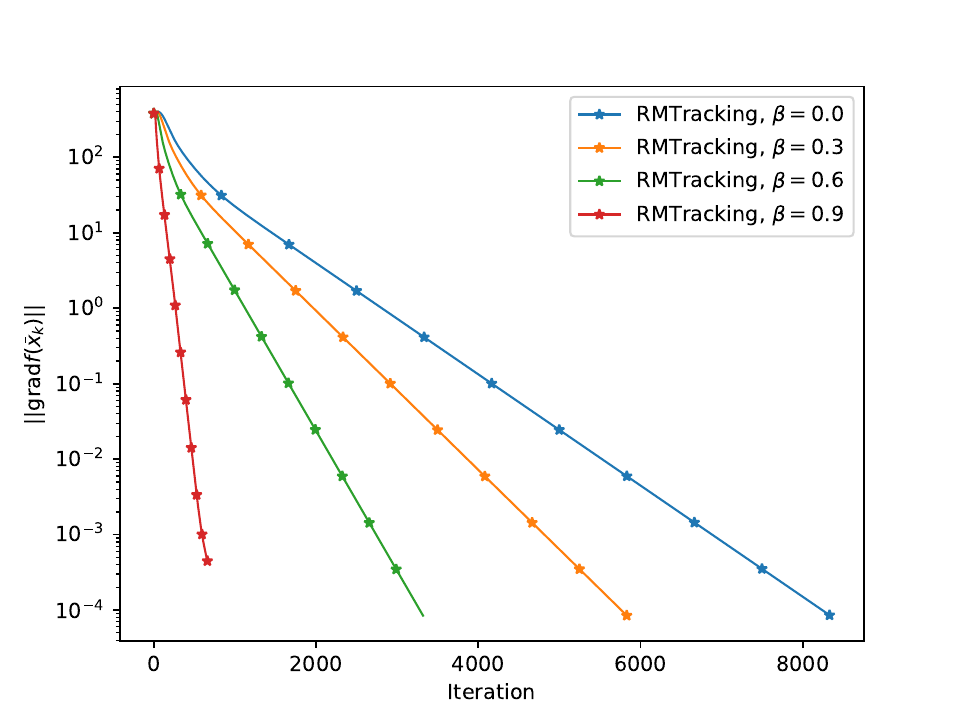}
	\end{minipage}
	\centering
	\begin{minipage}{0.44\linewidth}
		\centering
		\includegraphics[width=1\linewidth]{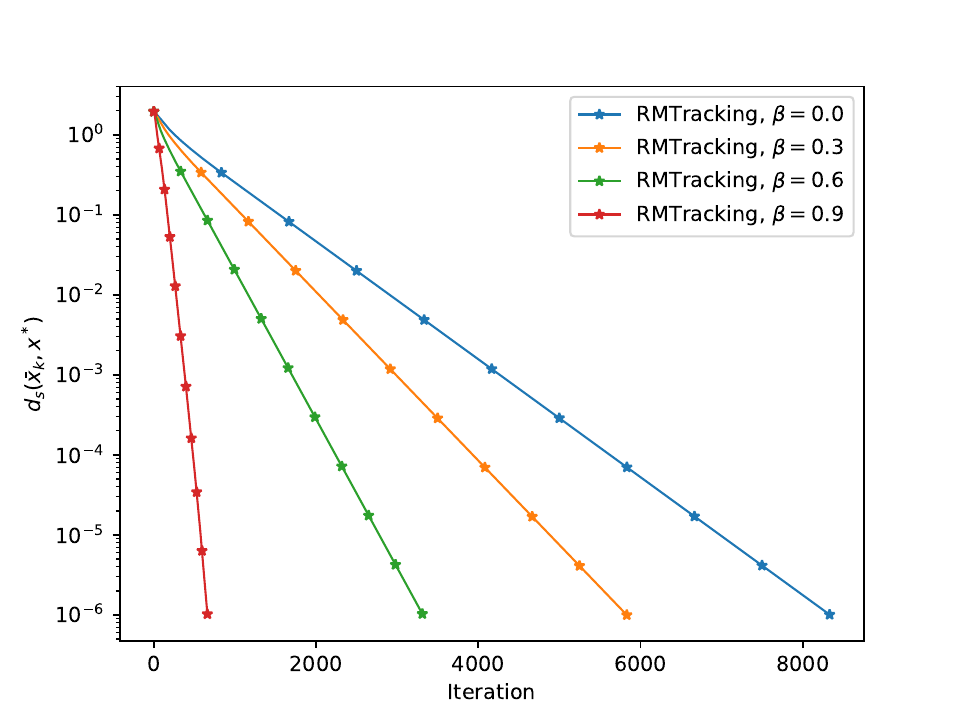}
	\end{minipage}
	\caption{Numerical results on synthetic data with different momentum weights and single-step consensus, eigengap $\Delta = 0.8$, Graph: Ring, $n=16$, $\hat{\alpha}=0.02$.}
	\label{fig1}
\end{figure*}

\begin{lemma}
Under Assumptions~\ref{weight} and \ref{lipschitz}. Let $\{\mathbf{x}_k\}$ be generated by Algorithm~\ref{alg2}. For all $k$, $\mathbf{x}_k \in \mathcal{N}$, we have

\begin{equation}
\frac{1}{n}\Vert \bar{\mathbf{x}}_k - \mathbf{x}_k\Vert^2 \leq C \frac{R^2}{(1-\beta)^2} L_g^2 \alpha^2.
\end{equation}
\label{lem5}
\end{lemma}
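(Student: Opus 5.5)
The plan is to unroll the one-step contraction of Lemma~\ref{key1} and bound the driving term $\alpha R\Vert \mathbf{v}_k\Vert$ uniformly in $k$. By Lemma~\ref{key1},
\begin{equation*}
\Vert \mathbf{x}_{k+1}-\bar{\mathbf{x}}_{k+1}\Vert \leq \sigma_2^t R\,\Vert \mathbf{x}_k-\bar{\mathbf{x}}_k\Vert + \alpha R\,\Vert \mathbf{v}_k\Vert .
\end{equation*}
Choose $t$ large enough that $\rho:=\sigma_2^t R<1$; this is the same condition on $t$ that keeps $\mathbf{x}_k\in\mathcal{N}$. Iterating from $k=0$ then gives
\begin{equation*}
\Vert \mathbf{x}_k-\bar{\mathbf{x}}_k\Vert \leq \rho^k\Vert \mathbf{x}_0-\bar{\mathbf{x}}_0\Vert + \alpha R\sum_{l=0}^{k-1}\rho^{k-1-l}\Vert \mathbf{v}_l\Vert \leq \rho^k\Vert \mathbf{x}_0-\bar{\mathbf{x}}_0\Vert + \frac{\alpha R}{1-\rho}\sup_l\Vert \mathbf{v}_l\Vert .
\end{equation*}
If the initial consensus error is nonzero, either assume $\mathbf{x}_0$ is consensual, as is standard, or absorb that term into $C$. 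After squaring and dividing by $n$, the statement follows once we show $\frac{1}{\sqrt n}\Vert \mathbf{v}_l\Vert \lesssim \frac{L_g}{1-\beta}$. The constant $C$ collects $(1-\rho)^{-2}$ and the numerical factors.

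To bound $\mathbf{v}_l$, note that $\Vert v_{i,l}\Vert\leq\Vert s_{i,l}\Vert$, since $\mathcal{P}_{T_x\mathcal{M}}$ is an orthogonal projection. So it suffices to bound $\frac{1}{n}\Vert \mathbf{s}_l\Vert^2$. Write $\mathbf{s}_l=\hat{\mathbf{s}}_l+(\mathbf{s}_l-\hat{\mathbf{s}}_l)$. Because $W$ is doubly stochastic and $s_{i,0}$ is initialized consistently, the tracking recursion preserves $\hat{s}_l=\hat{m}_l$ (up to the fixed offset from initialization). By \eqref{m_s}, $\Vert m_{i,l}\Vert\leq\frac{1}{1-\beta}\max_j\Vert g_{i,j}\Vert\leq\frac{L_f}{1-\beta}$, so the average part is of order $\frac{L_f}{1-\beta}$.

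For the disagreement part, use the standard tracking contraction
\begin{equation*}
\Vert \mathbf{s}_{l+1}-\hat{\mathbf{s}}_{l+1}\Vert\leq\sigma_2^t\Vert \mathbf{s}_l-\hat{\mathbf{s}}_l\Vert+\Vert \mathbf{m}_{l+1}-\mathbf{m}_l\Vert .
\end{equation*}
Here $m_{i,l+1}-m_{i,l}=(\beta-1)m_{i,l}+g_{i,l}$, so its norm is at most $2L_f$ per agent. Hence $\frac{1}{\sqrt n}\Vert \mathbf{s}_l-\hat{\mathbf{s}}_l\Vert\leq\frac{2L_f+\text{init}}{1-\sigma_2^t}$, which is again $\mathcal{O}\big(\frac{L_f}{1-\beta}\big)$. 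Since $L_f\leq\Gamma L_g$ by the definition of $L_g$ in Lemma~\ref{lem1}, these bounds can be expressed in terms of $L_g$, with $\Gamma$ absorbed into $C$. Substituting back into the unrolled bound yields $\frac{1}{n}\Vert \bar{\mathbf{x}}_k-\mathbf{x}_k\Vert^2\leq C\frac{R^2}{(1-\beta)^2}L_g^2\alpha^2$, using $(a+b)^2\leq 2a^2+2b^2$.

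The main obstacle is controlling $\mathbf{s}_l$ uniformly without circularity, and specifically identifying its average with $\hat{m}_l$. With $s_{i,0}=\operatorname{grad} f_i(x_{i,0})$ and $m_{i,0}=0$, we get $\hat{s}_l=\hat{m}_l+\hat{g}_0$ rather than exactly $\hat{m}_l$. This constant shift is bounded by $L_f$, so it is harmless, but it must be carried through explicitly. I would also double-check that the bound on $m_{i,l}$ uses the sup of $\Vert g\Vert$ over all past iterates, since \eqref{m_s} as written has an index typo. If a sharper constant is needed, I would replace the crude bound on $\Vert\mathbf{m}_{l+1}-\mathbf{m}_l\Vert$ by $L_g$-Lipschitz increments from Lemma~\ref{lem1}. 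For the stated result, however, the crude uniform bound is enough.
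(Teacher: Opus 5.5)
Your argument is correct and shares the paper's skeleton. Both unroll the contraction of Lemma~\ref{key1} with $\rho_t=\sigma_2^tR<1$ and feed in a uniform bound on $\Vert\mathbf v_k\Vert\le\Vert\mathbf s_k\Vert$.

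The difference is how that bound on $\mathbf s_k$ is obtained.
\begin{itemize}
\item The paper simply cites Lemma~\ref{s}, namely $\Vert s_{i,k}\Vert\le 4L_g/(1-\beta)$, which is proved by induction under the extra requirement $\sigma_2^t\sqrt n\le 1/4$.
\item You split $\mathbf s_l$ into its average $\hat m_l+\hat g_0$ and a disagreement part. The disagreement part obeys the standard tracking contraction, with driving term $\Vert\mathbf m_{l+1}-\mathbf m_l\Vert\le 2\sqrt n L_f$.
\end{itemize}
Your route needs only $\sigma_2<1$, so any $t$ works. The price is an extra factor $(1-\sigma_2^t)^{-2}$ and $\Gamma^2$ in $C$.

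Your route is also more careful than the paper's in two places. First, the proof of Lemma~\ref{s} implicitly uses $\hat s_k=\hat g_k$. That identity fails for momentum tracking: the average tracks $\hat m_k$ plus the offset $\hat g_0$, as you point out. The resulting bound does survive with a different constant. Second, your conversion $L_f\le\Gamma L_g$ is the justified version of the paper's $\Vert g_{i,k}\Vert\le L_g$, which holds automatically only when $\Gamma\le 1$.

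One caveat applies to both proofs equally, and concerns initialization. Absorbing a nonzero $\Vert\mathbf x_0-\bar{\mathbf x}_0\Vert$ into $C$ makes $C$ depend on $\Vert\mathbf x_0-\bar{\mathbf x}_0\Vert^2/(n\alpha^2)$. The paper's constant $C'$ for $k\le K$ does this implicitly. The later analysis (Lemma~\ref{last} and the theorem) treats $C$ as independent of $\alpha$. That really requires either consensual initialization or $\Vert\mathbf x_0-\bar{\mathbf x}_0\Vert=\mathcal O(\sqrt n\,\alpha)$.
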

\begin{proof}
Based on Lemma~\ref{key1}, we yield
\begin{equation}
\Vert \mathbf{x}_{k+1} - \bar{\mathbf{x}}_{k+1} \Vert \leq \sigma_2^t R \Vert \mathbf{x}_{k} - \bar{\mathbf{x}}_{k} \Vert + \frac{4\sqrt{n}}{1-\beta} \alpha R L_g,
\label{error1}
\end{equation}
where it follows from Lemma~\ref{s} that $\Vert\mathbf{v}_k \Vert \leq \Vert\mathbf{s}_k \Vert \leq \frac{4 \sqrt{n}}{1-\beta} L_g$.

Let $\rho_t=\sigma_2^t R$ where $0<\rho_t <1$, it follows from Eq.(\ref{error1}) that
\begin{equation}
\begin{aligned}
&\left\|\mathbf{x}_{k+1}-\bar{\mathbf{x}}_{k+1}\right\| \\
& \leq \rho_t \left\|\mathbf{x}_k-\bar{\mathbf{x}}_k\right\|+\frac{4\sqrt{n}}{1-\beta} \alpha R L_g \\
& \leq \rho_t^{k+1} \left\|\mathbf{x}_0-\bar{\mathbf{x}}_0\right\|+\frac{4 \sqrt{n} \alpha R L_g}{1-\beta}  \sum_{l=0}^k \rho_t^{k-l} .
\end{aligned}
\label{error1}
\end{equation}

Let $y_k=\frac{\left\|\mathbf{x}_k-\bar{\mathbf{x}}_k\right\|}{\sqrt{n} \alpha}$. For a positive integer $K \leq k$, it follows from Eq.(\ref{error1}) that
\begin{equation}
\begin{aligned}
y_{k+1} &\leq \rho_t y_k + \frac{4 R L_g}{1-\beta}  \\
& \leq \rho_t^{k+1-K} y_K + \frac{4 R L_g}{1-\beta} \sum_{l=0}^k \rho_t^{k-l}.
\end{aligned}
\end{equation}
For $0\leq k \leq K$, there exists some $C'>0$ such that $y_k^2 \leq C' \frac{R^2}{(1-\beta)^2} L_g^2$, where $C'$ is independent of $L_g$ and $n$. For $k \geq K$, one has that $y_k^2 \leq C \frac{R^2}{(1-\beta)^2} L_g^2$, where $C=2C'+\frac{32}{(1-\rho_t)^2}$. Hence, we get $\frac{\left\|\mathbf{x}_k-\bar{\mathbf{x}}_k\right\|^2}{n} \leq C \frac{\alpha^2 R^2}{(1-\beta)^2} L_g^2$ for all $k\geq 0$, where $C=\mathcal{O}(\frac{1}{(1-\rho_t)^2})$. The proof is completed.
\end{proof}

\subsection{Convergence Analysis}

With above preparations, this part establishes the convergence rate analysis of the RMTracking algorithm. 

By leveraging Lipschitz-type inequalities for the momentum term on any compact $C^2$-submanifold in Euclidean space, we yield a decreasing boundedness on $f$. 

\begin{lemma}
\label{last}
Under Assumptions~\ref{weight} and \ref{lipschitz}. Let $\{\mathbf{x}_k\}$ be generated by Algorithm~\ref{alg2}. Then we have
\begin{equation}
\begin{aligned}
&f(\bar{x}_{k+1}) \leq f(\bar{x}_{k}) -\frac{\alpha}{2(1-\beta)} \Vert \hat{g}_k \Vert^2 + \mathcal{C}_1 \alpha^2 \frac{1}{n} \Vert \mathbf{s}_k \Vert^2 \\
& +  \mathcal{C}_2 \frac{1}{n} \Vert \mathbf{x}_k - \bar{\mathbf{x}}_k \Vert^2 + \mathcal{C}_3 \frac{1}{n} \Vert \mathbf{x}_{k+1} - \bar{\mathbf{x}}_{k+1} \Vert^2,
\end{aligned}
\end{equation}
where
\[
\begin{aligned}
 & \mathcal{C}_1 = 2L_g Q+6L_g + \frac{192 Q^2 L_g^3 \alpha^2}{(1-\beta)^2} \\
 & \mathcal{C}_2 = \frac{16 Q+19}{2} L_g + \frac{CM^2 R^2 L_g^2 \alpha}{1-\beta} + \frac{3C R^2 L_g^3 \alpha^2 (8Q+\sqrt{n} F + M)^2}{(1-\beta)^2} \\
 & \mathcal{C}_3 = \frac{CM^2 R^2 L_g^2 \alpha}{1-\beta} + \frac{3C M^2 R^2 L_g^3 \alpha^2}{(1-\beta)^2}.
\end{aligned}
\]
\end{lemma}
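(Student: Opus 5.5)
Our plan is to apply the Riemannian descent inequality of Lemma~\ref{lem1} to the averaged function $f=\frac1n\sum_i f_i$ at the two consensus points $\bar x_k,\bar x_{k+1}\in\mathcal{M}$:
\[
f(\bar x_{k+1})\le f(\bar x_k)+\langle \operatorname{grad} f(\bar x_k),\bar x_{k+1}-\bar x_k\rangle+\tfrac{L_g}{2}\Vert \bar x_{k+1}-\bar x_k\Vert^2 .
\]
We then treat the inner product and the quadratic term separately.

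\textbf{Inner product.} We decompose $\bar x_{k+1}-\bar x_k=(\hat x_{k+1}-\hat x_k)+(\hat x_k-\bar x_k)-(\hat x_{k+1}-\bar x_{k+1})$.
\begin{itemize}
\item For the first piece, we follow the first chain of Eq.~(\ref{y_x}), using the second-order projection estimate $\Vert\mathcal{P}_{\mathcal{M}}(x+u)-x-\mathcal{P}_{T_x\mathcal{M}}(u)\Vert\le Q\Vert u\Vert^2$. This writes $\hat x_{k+1}-\hat x_k=-\alpha\hat v_k-\frac1n\sum_i\operatorname{grad}\varphi_i^t(\mathbf{x}_k)+e_k$, with $\Vert e_k\Vert\le \frac{2Q}{n}(4\Vert\mathbf{x}_k-\bar{\mathbf{x}}_k\Vert^2+\alpha^2\Vert\mathbf{v}_k\Vert^2)$.
\item The $\operatorname{grad}\varphi^t$ average is bounded by $\frac{\sqrt n F}{n}\Vert\mathbf{x}_k-\bar{\mathbf{x}}_k\Vert^2$ (Lemma~\ref{f}).
\item The two $\hat x-\bar x$ pieces are bounded by $\frac Mn\Vert\cdot\Vert^2$ (Lemma~\ref{m}).
\end{itemize}
Since $\Vert\operatorname{grad} f(\bar x_k)\Vert\le L_f$ and these error terms are already quadratic in the consensus error, each contributes directly to $\mathcal{C}_1,\mathcal{C}_2,\mathcal{C}_3$ with constant factors. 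The first-order term that remains is $-\alpha\langle\operatorname{grad} f(\bar x_k),\hat v_k\rangle$.

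\textbf{Main obstacle: the first-order term.} This is the one step where we must extract $-\frac{\alpha}{2(1-\beta)}\Vert\hat g_k\Vert^2$. Because $\mathbf{W}^t$ is doubly stochastic, the tracking recursion gives $\hat s_k=\hat m_k$. Unrolling the momentum recursion, $\hat m_k=\sum_{j<k}\beta^{k-1-j}\hat g_j$. The intended surrogate for $\hat v_k$ is therefore $\frac{1}{1-\beta}\hat g_k$, and we write
\[
\hat v_k=\tfrac{1}{1-\beta}\hat g_k+\bigl(\hat v_k-\hat s_k\bigr)+\bigl(\hat m_k-\tfrac1{1-\beta}\hat g_k\bigr).
\]
The three resulting pieces are handled as follows.
\begin{itemize}
\item $\hat v_k-\hat s_k=-\frac1n\sum_i\mathcal{P}_{N_{x_{i,k}}\mathcal{M}}(s_{i,k})$. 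Its pairing with the tangent vector $\operatorname{grad} f(\bar x_k)$ is controlled by the normal-space curvature bound of Definition~\ref{smooth}(ii). Since tangent spaces at $x_{i,k}$ and $\bar x_k$ differ by $\mathcal{O}(\Vert x_{i,k}-\bar x_k\Vert)$, Young's inequality gives terms of order $\alpha^2\Vert\mathbf{s}_k\Vert^2/n$ plus consensus error.
\item For $\hat m_k-\frac{1}{1-\beta}\hat g_k$, we bound $\Vert\hat g_j-\hat g_k\Vert$ by $L_g$ times the accumulated iterate movement. This movement is $\Vert\bar x_{l+1}-\bar x_l\Vert$ (bounded via Lemma~\ref{keyx}) plus consensus errors. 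The geometric weights $\beta^{k-1-j}$ sum to $\frac1{1-\beta}$, which is why the $\frac{1}{1-\beta}$ and $\frac{1}{(1-\beta)^2}$ factors in $\mathcal{C}_2,\mathcal{C}_3$ appear.
\item We also relate $\operatorname{grad} f(\bar x_k)$ to $\hat g_k$: Lemma~\ref{lem1} gives $\Vert\operatorname{grad} f(\bar x_k)-\hat g_k\Vert\le\frac{L_g}{\sqrt n}\Vert\mathbf{x}_k-\bar{\mathbf{x}}_k\Vert$.
\end{itemize}
Completing the square with $-\frac{\alpha}{1-\beta}\Vert\hat g_k\Vert^2$ and Young's inequality then leaves $-\frac{\alpha}{2(1-\beta)}\Vert\hat g_k\Vert^2$. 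Where a cross term requires it, we replace the consensus error by $C\frac{R^2L_g^2\alpha^2}{(1-\beta)^2}$ via Lemma~\ref{lem5}. This substitution is what produces the $CM^2R^2L_g^2\alpha/(1-\beta)$ entries.

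\textbf{Quadratic term.} Finally we bound $\frac{L_g}2\Vert\bar x_{k+1}-\bar x_k\Vert^2$ by squaring Lemma~\ref{keyx}, using $(a+b+c+d)^2\le 4(a^2+\dots)$ and $\Vert\mathbf{v}_k\Vert\le\Vert\mathbf{s}_k\Vert$. The quartic consensus terms are reduced to quadratic ones with one factor from Lemma~\ref{lem5}. The quartic $\Vert\mathbf{v}_k\Vert^4$ term is reduced with the bound $\Vert\mathbf{s}_k\Vert\le\frac{4\sqrt n L_g}{1-\beta}$ from Lemma~\ref{s}; this gives the $\frac{192Q^2L_g^3\alpha^2}{(1-\beta)^2}$ part of $\mathcal{C}_1$. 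Collecting coefficients yields $\mathcal{C}_1,\mathcal{C}_2,\mathcal{C}_3$.
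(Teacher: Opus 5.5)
\textbf{There is a genuine gap, and it is in the step you flag as the main obstacle.} Unrolling correctly,
\[
\hat m_k-\tfrac{1}{1-\beta}\hat g_k=\sum_{j<k}\beta^{k-1-j}\bigl(\hat g_j-\hat g_k\bigr)-\tfrac{\beta^k}{1-\beta}\hat g_k .
\]

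\emph{The history sum.} You can control it only through $\Vert\hat g_j-\hat g_k\Vert\le\frac{L_g}{\sqrt n}\Vert\mathbf{x}_j-\mathbf{x}_k\Vert$. Chaining Lemma~\ref{keyx} then brings in $\Vert\mathbf{v}_l\Vert$ and $\Vert\mathbf{x}_l-\bar{\mathbf{x}}_l\Vert$ for all $l<k$. None of these appears on the right-hand side of the lemma, which involves only $\hat g_k$, $\mathbf{s}_k$ and the consensus errors at $k$ and $k+1$. Removing them with the uniform bounds of Lemmas~\ref{lem5} and \ref{s} leaves an additive term (of order $\alpha^2L_fL_g^2/(1-\beta)^3$ via Cauchy--Schwarz) that is not multiplied by any quantity in the lemma.

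\emph{The transient $-\frac{\beta^k}{1-\beta}\hat g_k$.} Your outline drops it, and it is worse. Paired with $-\alpha\operatorname{grad} f(\bar x_k)$, it returns $+\frac{\alpha\beta^k}{1-\beta}\Vert\hat g_k\Vert^2$ up to consensus error. The net first-order gain is then only about $\frac{\alpha(1-\beta^k)}{1-\beta}\Vert\hat g_k\Vert^2$, which is less than the required $\frac{\alpha}{2(1-\beta)}\Vert\hat g_k\Vert^2$ whenever $\beta^k>\frac12$, and zero at $k=0$.

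Also, with the stated initialization $s_{i,0}=\operatorname{grad} f_i(x_{i,0})$, $m_{i,0}=0$, the tracking recursion gives $\hat s_k=\hat m_k+\hat g_0$, not $\hat s_k=\hat m_k$.

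\textbf{No repair is possible, because the one-step inequality is false.} Take all $f_i=f$ and $x_{i,0}=x_0$ with $\operatorname{grad} f(x_0)\neq0$.
\begin{itemize}
\item Every consensus term vanishes, $\frac1n\Vert\mathbf{s}_0\Vert^2=\Vert\hat g_0\Vert^2$, and $\bar x_1=\mathcal{P}_{\mathcal{M}}(x_0-\alpha\hat g_0)$.
\item Euclidean $L$-smoothness and $\Vert\mathcal{P}_{\mathcal{M}}(x+u)-x-\mathcal{P}_{T_x\mathcal{M}}(u)\Vert\le Q\Vert u\Vert^2$ give $f(\bar x_1)\ge f(\bar x_0)-\alpha\Vert\hat g_0\Vert^2-c\alpha^2\Vert\hat g_0\Vert^2$, with $c$ depending only on $L,L_f,Q$.
\item The lemma at $k=0$ would then force $\frac{1}{2(1-\beta)}-1\le(\mathcal{C}_1+c)\alpha$. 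This fails, e.g., for $\beta=0.9$ and all small $\alpha$.
\end{itemize}

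\textbf{Where the paper's proof breaks.} It reaches the bound only because, in Eq.~(\ref{f4}), it replaces $-\alpha\langle\hat g_k,\hat s_k\rangle$ by $-\frac{\alpha}{1-\beta}\Vert\hat g_k\Vert^2$, citing Eq.~(\ref{m_s}). That formula's summand is mis-indexed as $g_{i,k}$ instead of $g_{i,j}$, which treats the momentum as a multiple of the current gradient. Even taken literally, its factor $\frac{1-\beta^k}{1-\beta}$ would give the inequality in the wrong direction. Your formula $\hat m_k=\sum_{j<k}\beta^{k-1-j}\hat g_j$ is the correct one and exposes exactly this step. A valid result would have to be a summed, multi-step estimate in which the history and start-up terms are absorbed after summing over $k$.

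\textbf{Secondary point.} You bound the $e_k$, $\operatorname{grad}\varphi^t$ and $\hat x-\bar x$ pieces by Cauchy--Schwarz against $L_f$. That yields coefficients such as $L_fM$ and $L_f(8Q+\sqrt nF)$, so even the non-momentum part would not reproduce the stated $\mathcal{C}_1,\mathcal{C}_2,\mathcal{C}_3$. The paper instead uses Young's inequality with weight $\frac{\alpha}{2(1-\beta)}$ on $\Vert\hat g_k\Vert^2$, followed by Lemma~\ref{lem5}.
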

\begin{proof}
    The proofs can be found in Appendix~\ref{app2}.
\end{proof}

\begin{figure*}[ht]
	\centering
	\begin{minipage}{0.44\linewidth}
		\centering
		\includegraphics[width=1\linewidth]{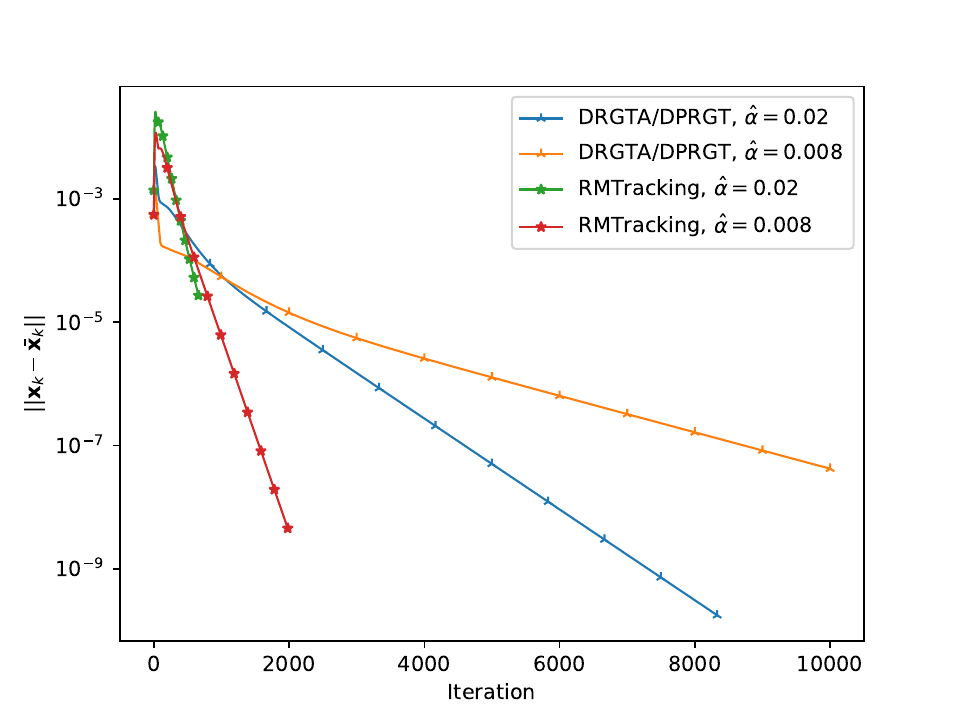}
	\end{minipage}
        \centering
	\begin{minipage}{0.44\linewidth}
		\centering
		\includegraphics[width=1\linewidth]{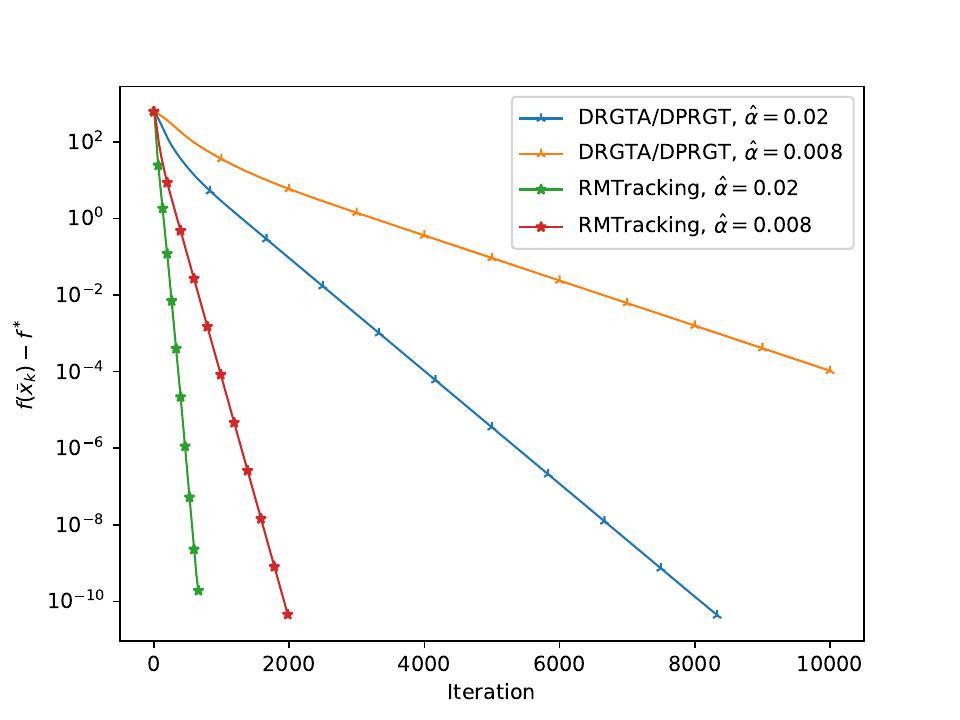}
	\end{minipage}
	\centering
	\begin{minipage}{0.44\linewidth}
		\centering
		\includegraphics[width=1\linewidth]{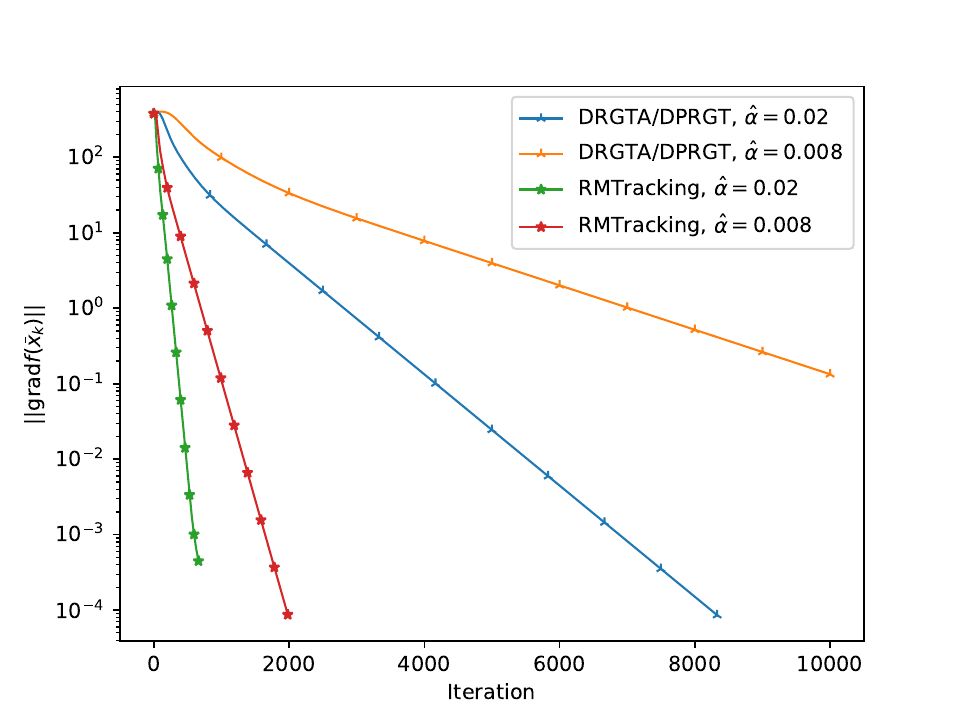}
	\end{minipage}
	\centering
	\begin{minipage}{0.44\linewidth}
		\centering
		\includegraphics[width=1\linewidth]{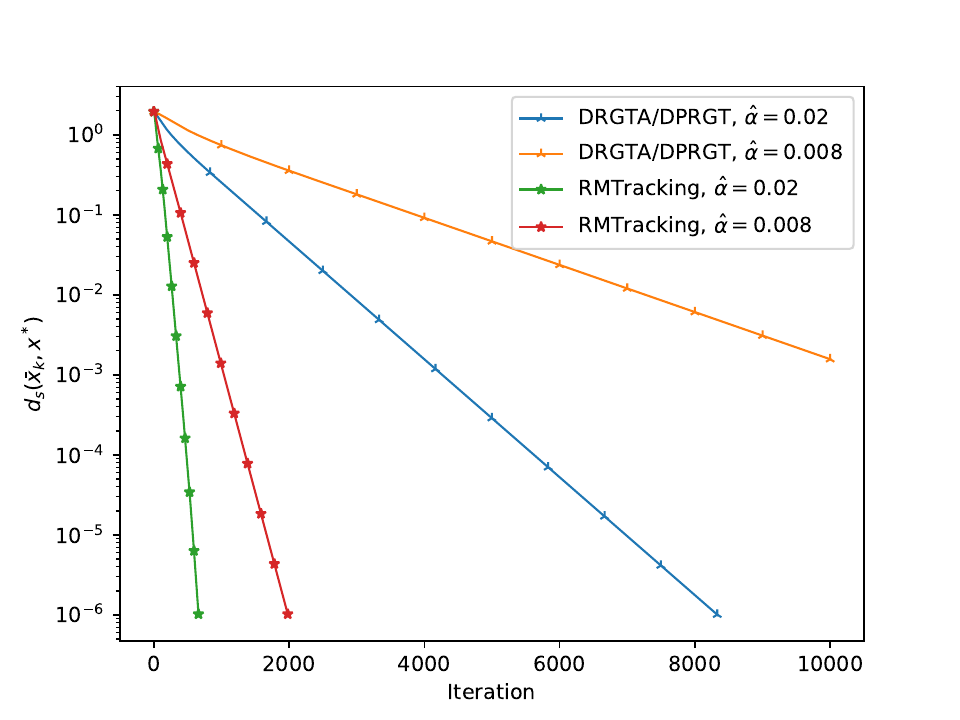}
	\end{minipage}
	\caption{Numerical results on synthetic data with different step-sizes and single-step consensus, eigengap $\Delta = 0.8$, Graph: Ring, $n=16$.}
	\label{fig2}
\end{figure*}

Under the safety step-size, we present the following main theorem on $\mathcal{O}(\frac{1-\beta}{K})$ convergence rate of the Riemannian gradient average to reach the $\epsilon$-stationary point of (\ref{decentralized}).

\begin{theorem}
\label{final}
    Under Assumptions~\ref{weight} and \ref{lipschitz}. Let $\{\mathbf{x}_k\}$ be generated by Algorithm~\ref{alg2}. If $\mathbf{x}_0 \in \mathcal{N}$, $t\geq \max\left\{\log_{\sigma^2}\left(\frac{1}{4\sqrt{n}}\right)\right\}$, and $\alpha$ satisfies
    \begin{equation}
       \alpha < \min \left\{1,\frac{(1-\beta)^2}{32\mathcal{C}_1 (1-\beta) + 128 L_g^2 \tilde{\mathcal{C}}_3(1+4R^2)} \right\} , 
    \end{equation}
    then we have
    \begin{equation}
    \begin{aligned}
        & \min_{k=0,\cdots,K} \frac{1}{n} \Vert \hat{g}_k \Vert^2 = \mathcal{O}\left(\frac{1-\beta}{\alpha K}\right), \\
        & \min_{k=0,\cdots,K} \frac{1}{n} \Vert \mathbf{x}_{k} - \bar{\mathbf{x}}_{k} \Vert^2 = \mathcal{O}\left(\frac{1}{K}\right), \\
        &\min_{k=0,\cdots,K} \Vert \operatorname{grad} f(\bar{x}_k) \Vert^2 = \mathcal{O}\left(\frac{1}{\alpha K}\right).
    \end{aligned}
    \end{equation}
    When $\frac{\alpha}{1-\beta}$ is sufficiently small, it holds that
    \begin{equation}
    \begin{aligned}
        & \min_{k=0,\cdots,K} \frac{1}{n} \Vert \hat{g}_k \Vert^2 = \mathcal{O}\left(\frac{1-\beta}{\alpha K}\right), \\
        & \min_{k=0,\cdots,K} \frac{1}{n} \Vert \mathbf{x}_{k} - \bar{\mathbf{x}}_{k} \Vert^2 = \mathcal{O}\left(\frac{1-\beta}{K}\right), \\
        &\min_{k=0,\cdots,K} \Vert \operatorname{grad} f(\bar{x}_k) \Vert^2 = \mathcal{O}\left(\frac{1-\beta}{\alpha K}\right).
    \end{aligned}
    \end{equation}
\end{theorem}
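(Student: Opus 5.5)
The plan is to telescope the descent inequality of Lemma~\ref{last} over $k=0,\dots,K$ and absorb the error terms. Everything outside the negative term $-\frac{\alpha}{2(1-\beta)}\Vert\hat{g}_k\Vert^2$ has to be charged either to that term or to a summable consensus quantity.

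\textbf{Step 1: consensus recursions.} Square Lemma~\ref{key1} using $(a+b)^2\le 2a^2+2b^2$:
\[
\Vert\mathbf{x}_{k+1}-\bar{\mathbf{x}}_{k+1}\Vert^2\le 2\rho_t^2\Vert\mathbf{x}_k-\bar{\mathbf{x}}_k\Vert^2+2\alpha^2R^2\Vert\mathbf{s}_k\Vert^2 .
\]
The condition on $t$ makes $\rho_t=\sigma_2^tR$ small enough that $2\rho_t^2\le\frac12$. Summing over $k$ then gives
\[
\sum_{k\le K}\Vert\mathbf{x}_k-\bar{\mathbf{x}}_k\Vert^2\le 2\Vert\mathbf{x}_0-\bar{\mathbf{x}}_0\Vert^2+4R^2\alpha^2\sum_{k\le K}\Vert\mathbf{s}_k\Vert^2 .
\]
Next I need a tracking recursion for $\mathbf{s}_k$. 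Because $s_{i,0}=\operatorname{grad}f_i(x_{i,0})$ and $m_{i,0}=0$, the mean $\hat{s}_k$ equals $\hat{m}_k$, where $\hat{m}_k=\sum_{j<k}\beta^{k-1-j}\hat{g}_j$. Decompose $\Vert\mathbf{s}_k\Vert^2\le 2\Vert\mathbf{s}_k-\mathbf{1}\otimes\hat{s}_k\Vert^2+2n\Vert\hat{m}_k\Vert^2$. The deviation part contracts by $\sigma_2^t$ each step and is driven by $\mathbf{m}_{k+1}-\mathbf{m}_k$. Using Lemma~\ref{lem1}, $\Vert g_{i,k}-g_{i,k-1}\Vert\le L_g\Vert x_{i,k}-x_{i,k-1}\Vert$, and $\Vert\mathbf{x}_{k}-\mathbf{x}_{k-1}\Vert$ is bounded by consensus errors plus $\alpha\Vert\mathbf{s}_{k-1}\Vert$ together with Lemma~\ref{keyx}. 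For the average part, Jensen on the geometric weights gives
\[
\sum_k\Vert\hat{m}_k\Vert^2\le\frac{1}{(1-\beta)^2}\sum_k\Vert\hat{g}_k\Vert^2 .
\]
The outcome should be
\[
\sum_k\tfrac1n\Vert\mathbf{s}_k\Vert^2\le c_0+\frac{c_1}{(1-\beta)^2}\sum_k\Vert\hat{g}_k\Vert^2+c_2L_g^2\sum_k\tfrac1n\Vert\mathbf{x}_k-\bar{\mathbf{x}}_k\Vert^2 .
\]
This is where the constant $128L_g^2\tilde{\mathcal{C}}_3(1+4R^2)$ in the step-size condition arises, with $\tilde{\mathcal{C}}_3$ collecting $\mathcal{C}_2+\mathcal{C}_3$.

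\textbf{Step 2: telescoping.} Sum Lemma~\ref{last}, substitute the Step~1 bounds, and use compactness of $\mathcal{M}$ to bound $f(\bar{x}_0)-f(\bar{x}_{K+1})\le 2\max_{\mathcal{M}}|f|$. The coefficient of $\sum\Vert\hat{g}_k\Vert^2$ becomes
\[
\frac{\alpha}{2(1-\beta)}-\alpha^2\,\frac{\mathcal{C}_1+4R^2(\mathcal{C}_2+\mathcal{C}_3)}{(1-\beta)^2}\cdot O(1).
\]
The stated bound on $\alpha$ is exactly what keeps this at least $\frac{\alpha}{4(1-\beta)}$. That yields $\sum_k\Vert\hat{g}_k\Vert^2=O\!\left(\frac{1-\beta}{\alpha}\right)$ and hence the first rate. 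Feeding it back into Step~1 gives $\sum_k\frac1n\Vert\mathbf{x}_k-\bar{\mathbf{x}}_k\Vert^2=O(1+\alpha(1-\beta)^{-1})$. This is $O(1)$ in general, and $O(1-\beta)$ once the $\alpha^2$-weighted terms are controlled, which is the regime where $\frac{\alpha}{1-\beta}$ is small and the initial consensus term is also of that order.

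\textbf{Step 3: gradient at the projected mean.} Write
\[
\operatorname{grad}f(\bar{x}_k)=\hat{g}_k+\frac1n\sum_i\bigl(\operatorname{grad}f_i(\bar{x}_k)-\operatorname{grad}f_i(x_{i,k})\bigr).
\]
By Lemma~\ref{lem1}, $\Vert\operatorname{grad}f(\bar{x}_k)\Vert^2\le 2\Vert\hat{g}_k\Vert^2+\frac{2L_g^2}{n}\Vert\mathbf{x}_k-\bar{\mathbf{x}}_k\Vert^2$. Summing and dividing by $K$ gives the third rate in each regime. The min over $k$ is bounded by the average over $k$.

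\textbf{Main obstacle.} The hardest step is the tracking bound in Step~1. Two things must be verified: that the momentum accumulation costs only $(1-\beta)^{-2}$ relative to $\sum\Vert\hat{g}_k\Vert^2$, and that the resulting loop between the $\mathbf{s}$-, $\mathbf{x}$- and $\hat{g}$-sums can be closed under the given condition on $\alpha$. I would first try a joint Lyapunov potential of the form $f(\bar{x}_k)+a\Vert\mathbf{x}_k-\bar{\mathbf{x}}_k\Vert^2+b\alpha\Vert\mathbf{s}_k-\mathbf{1}\otimes\hat{s}_k\Vert^2$, with $a,b$ chosen to cancel the cross terms.
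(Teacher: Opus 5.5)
Your route differs from the paper's only in how the final bookkeeping is closed, and in that respect it is the sounder one. However, it rests on an invariant that is false for the stated initialization, and it overstates what the step-size hypothesis gives.

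\textbf{Comparison with the paper.} Your tracking estimate is, up to constants, Lemma~\ref{g_g} rearranged as an upper bound on $\frac1n\sum_k\Vert\mathbf{s}_k\Vert^2$. That bound is $\frac{8}{(1-\beta)^2}\sum_k\Vert\hat g_k\Vert^2$, plus $\frac{256L_g^2\tilde{\mathcal{C}}_3}{(1-\beta)^2}$ times the consensus sum, plus a constant. So your $c_2$ also carries $(1-\beta)^{-2}$. Your Step~1 is the paper's Eq.~(\ref{x_m}). The real difference is which side of the telescoped Lemma~\ref{last} you keep. The paper spends all of $-\frac{\alpha}{2(1-\beta)}\sum_k\Vert\hat g_k\Vert^2$ to buy $-\frac{(1-\beta)\alpha}{8}\frac1n\sum_k\Vert\mathbf{s}_k\Vert^2$. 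It absorbs every error there and obtains $\min_k\frac1n\Vert\mathbf{s}_k\Vert^2=\mathcal{O}(\frac{1}{\alpha(1-\beta)K})$. It then returns to $\hat g$ through $\Vert\hat g_k\Vert^2\le\frac{(1-\beta)^2}{2}\frac1n\Vert\mathbf{s}_k\Vert^2$. Lemma~\ref{g_g} does not supply that inequality; it points the other way. The honest reverse bound comes from $\hat g_k=\hat m_{k+1}-\beta\hat m_k$ together with Jensen, and it is only $\sum_k\Vert\hat g_k\Vert^2\lesssim\frac1n\sum_k\Vert\mathbf{s}_k\Vert^2$. That would give $\mathcal{O}(\frac{1}{\alpha(1-\beta)K})$. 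You instead push the $\mathbf{s}$-errors onto $\sum_k\Vert\hat g_k\Vert^2$ and keep the descent in $\hat g$. Taking Lemma~\ref{last} as given, the factor $1-\beta$ in the first rate is then read off legitimately from the coefficient $\frac{\alpha}{2(1-\beta)}$. The bound $\frac1n\sum_k\Vert\mathbf{s}_k\Vert^2=\mathcal{O}(\frac{1}{\alpha(1-\beta)})$ needed for consensus follows by back-substitution.

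\textbf{The invariant fails as written.} Averaging the $s$-update with the doubly stochastic $W^t$ gives $\hat s_{k+1}-\hat m_{k+1}=\hat s_k-\hat m_k$. With $s_{i,0}=\operatorname{grad}f_i(x_{i,0})$ and $m_{i,0}=0$ you therefore get $\hat s_k=\hat m_k+\hat g_0$, not $\hat s_k=\hat m_k$. Then $2n\Vert\hat s_k\Vert^2$ does not reduce to $2n\Vert\hat m_k\Vert^2$. The sum $\frac1n\sum_{k\le K}\Vert\mathbf{s}_k\Vert^2$ picks up roughly $(K+1)\Vert\hat g_0\Vert^2$, so $\mathcal{C}_1\alpha^2\frac1n\sum_k\Vert\mathbf{s}_k\Vert^2$ grows linearly in $K$. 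This leaves a non-vanishing floor of order $(1-\beta)\mathcal{C}_1\alpha\Vert\hat g_0\Vert^2$ in the first rate. This is not an artifact of the analysis: already for $n=1$ in Euclidean space, every fixed point of the iteration satisfies $\nabla f(x^\star)=-(1-\beta)\nabla f(x_0)$. You need a consistent initialization $s_{i,0}=m_{i,0}$, for example both zero, after which your geometric-sum bound holds verbatim. The paper's Lemma~\ref{g_g} tacitly needs the same fix, since its $\sigma_2^t$-contraction of $\mathbf{s}_k-\hat{\mathbf{m}}_k$ requires that vector to have zero mean.

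\textbf{The step-size condition.} $\tilde{\mathcal{C}}_3$ does not collect $\mathcal{C}_2+\mathcal{C}_3$. In the paper it is $2/(1-\sigma_2^t)^2$, the constant of the tracking recursion. It is exactly the factor multiplying $L_g^2/(1-\beta)^2$ when you close your $\mathbf{s}$--$\mathbf{x}$ loop. Read that way, the stated condition controls $\mathcal{C}_1$ and the loop closure. It does not control the $R^2(\mathcal{C}_2+\mathcal{C}_3)\alpha^2$ part of the coefficient you correctly wrote down. Since $\mathcal{C}_2\ge\frac{16Q+19}{2}L_g$ does not shrink with $\alpha$, you must add a hypothesis such as $\alpha\le c(1-\beta)/(R^2(\mathcal{C}_2+\mathcal{C}_3))$. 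The paper has the same hole: the term $\tilde{\mathcal{C}}_1\alpha^2R^2(\mathcal{C}_2+\mathcal{C}_3)\frac1n\sum_k\Vert\mathbf{s}_k\Vert^2$ silently disappears when it invokes the step-size condition. Still, your claim that the stated bound is exactly what is needed is not correct.

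\textbf{The second regime.} Your caveat here is right and should be kept. Eq.~(\ref{x_m}) gives consensus error $\frac1K\bigl(\tilde{\mathcal{C}}_2+\mathcal{O}(\frac{\alpha}{1-\beta})\bigr)$, with no factor $1-\beta$ on $\tilde{\mathcal{C}}_2$. The paper's $(1-\beta)\tilde{\mathcal{C}}_2/K$ does not follow from that equation. So the $\mathcal{O}(\frac{1-\beta}{K})$ consensus line needs the extra assumption $\frac1n\Vert\mathbf{x}_0-\bar{\mathbf{x}}_0\Vert^2=\mathcal{O}(1-\beta)$. Your Step~3 does give the third line in that regime, because small $\alpha/(1-\beta)$ lets $2L_g^2\tilde{\mathcal{C}}_2$ be absorbed into $(1-\beta)/\alpha$.
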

\begin{proof}
    The proofs can be found in Appendix~\ref{app3}.
\end{proof}

In particular, RMTracking exhibits an $\mathcal{O}(\frac{1-\beta}{K})$ convergence rate when the step-size is sufficiently small, which is $\frac{1}{1-\beta}$ times faster than other Riemannian gradient tracking algorithms (DRGTA and DPRGT). When $\beta$ is set to $0$, RMTracking will degenerate into Riemannian Gradient Tracking, which produces results consistent with DRGTA and DPRGT.

\section{Numerical Experiments}

In this section, we compare our RMTracking method with DRGTA~\cite{chen2021decentralized} and DPRGT~\cite{deng2023decentralized}, both of which are first-order distributed Riemannian optimization approaches utilizing retraction and projection. We evaluate their performance on the distributed eigenvector problem given by:
\begin{equation}
\begin{aligned}
& \min _{\mathbf{x} \in \mathcal{M}^n}-\frac{1}{2 n} \sum_{i=1}^n \operatorname{tr}\left(x_i^{\top} A_i^{\top} A_i x_i\right), \quad \text { s.t. } \; x_1=x_2=\ldots=x_n ,\\
& \text{where} \quad \mathcal{M}^n:=\underbrace{\operatorname{St}(d, r) \times \operatorname{St}(d, r) \times \cdots \times \operatorname{St}(d, r)}_n .
\label{pca}
\end{aligned}
\end{equation}
Here, $A_i \in \mathbb{R}^{m_i \times d}$ represents the local data matrix, and $m_i$ denotes the sample size for agent $i$. Building upon this, $A^\top:=[A_1^\top, \cdots, A_n^\top]$ denotes the global data matrix. For any solution $x^*$ of Eq.(\ref{pca}), $x^* O$ also qualifies as a solution in essence for an arbitrary orthogonal matrix $O \in \mathbb{R}^{r \times r}$. Consequently, the distance between two points $x$ and $x^*$ can be defined as
\begin{equation}
d_s(x,x^*)=\min_{O^\top O=OO^\top=\emph{I}_r} \Vert x O - x^*\Vert.
\end{equation}

We measure these algorithms using four metrics: the consensus error ``$ \Vert \mathbf{x}_k - \mathbf{\bar{x}}_k \Vert$", the gradient norm ``$\Vert \mathrm{grad} f( {\bar{x}}_k) \Vert$", the difference of the objective function ``$f( {\bar{x}}_k) - f^*$", and the distance to the global optimum ``$d_s({\bar{x}}_k, x^*)$". The experiments are conducted using the Intel(R) Core(TM) i7-14700K CPU, and the code is implemented in Python under the package of mpi4py.

\subsection{Synthetic data}

We run $n=16$ and set $m_1=m_2=\cdots=m_{16}=1000$, $d=10$, and $r=5$. Subsequently, we generate $m_1 \times n$ i.i.d samples to obtain $A$ by following standard multi-variate Gaussian distribution. Specifically, let $A= \mathbb{U} \Sigma \mathbb{V}^\top$ represent the truncated SVD, where $\mathbb{V} \in \mathbb{R}^{d \times d}$ and $\mathbb{U} \in \mathbb{R}^{1000n \times d}$ are orthogonal matrices, and $\Sigma \in \mathbb{R}^{d \times d}$ is a diagonal matrix. Finally, we define the singular values of $A$ as $\Sigma_{i,i}=\Sigma_{0,0} \times \Delta^{i/2}$, where $i \in [d]$ and the eigengap satisfies $\Delta \in (0,1)$. 

We use a constant step-size for all comparisons. For DRGTA, DPRGT, and RMTracking, we cap the maximum epoch at 10,000, terminating early if $d_s(\bar{x}_k,x^*) \leq 10^{-6}$. The step-size is defined as $\alpha=\frac{n \hat{\alpha}}{\sum_{i=1}^n m_i}$. We evaluate the graph matrix representing the topology among agents (Ring network). According to \cite{chen2021decentralized}, $W$ is the Metroplis constant matrix~\cite{shi2015extra}. 

We conduct ablation experiments on RMTracking under different momentum weights ($\beta=0.0,0.3,0.6,0.9$). Note that RMTracking is equivalent to Riemannian Gradient Tracking used in DRGTA and DPRGT when $\beta=0.0$. Let the graph among 8 agents be Ring network. Furthermore, we set the single-step consensus $t=1$ and step-size $\hat{\alpha}=0.02$, respectively. Due to the introduction of momentum, the empirical results in Figure \ref{fig1} show that our algorithm converges faster as $\beta \in [0,1)$ becomes larger, which is consistent with the convergence rate of $\mathcal{O}(\frac{1-\beta}{K})$. We further compare our algorithm with the state-of-the-art distributed Riemannian gradient tracking algorithms DRGTA and DPRGT, as shown in Figure \ref{fig2}. Here, we apply the large step-sizes ($\hat{\alpha}=0.008,0.02$) and $\beta=0.9$. To achieve an $\epsilon$-stationary point with the same distance, RMTracking requires much less iterations than the other two algorithms.

\section{Conclusion}

This paper proposes a Riemannian Momentum Tracking (RMTracking) algorithm over a compact submanifold. If the agents are restricted to a suitable neighborhood, we prove an $\mathcal{O}(\frac{1-\beta}{K})$ convergence rate of the Riemannian gradient average using fixed step-sizes. Empirically, our algorithm achieves $\frac{1}{1-\beta}$ times faster than other related algorithms when the step-size is sufficiently small. Finally, we conduct numerical experiments to demonstrate the effectiveness of the algorithm and the theoretical analysis.
The current work's improvement in convergence rate remains constrained by parameters $\alpha$ and $\beta$. In the future, we plan to further investigate the acceleration mechanisms of distributed Riemannian optimization to achieve better convergence rates.

\appendix
\subsection{Two key inequalities}
\label{ine}
In the following two lemmas, we show that $\Vert \bar{x} - \hat{x} \Vert$ and $\left\| \sum_{i=1}^n \operatorname{grad} \varphi^t_i(\mathbf{x}) \right\|$ are bounded by the square of consensus error, which are useful in proving Lemma \ref{keyx} and \ref{last}.

\begin{lemma}
\label{m}
    \cite{deng2023decentralized} Let $M= \max_{x \in \mathcal{M}} \Vert D^2 \mathcal{P}_{\mathcal{M}}(x) \Vert$. For any $\mathbf{x} \in \mathcal{M}^n$, we have
    \begin{equation}
        \Vert \bar{x} - \hat{x} \Vert \leq M \frac{\Vert \mathbf{x} - \bar{\mathbf{x}}\Vert^2}{n}.
    \end{equation}
\end{lemma}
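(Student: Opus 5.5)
We need to prove: for $\mathbf{x}\in\mathcal{M}^n$ with $\bar{x}\in\mathcal{P}_{\mathcal{M}}(\hat{x})$, we have $\Vert \bar{x}-\hat{x}\Vert \le M\Vert \mathbf{x}-\bar{\mathbf{x}}\Vert^2/n$. The plan is a second-order Taylor expansion of the projection $\mathcal{P}_{\mathcal{M}}$ around the point $\bar{x}\in\mathcal{M}$. We work in the regime where $\hat{x}$ lies in the tube $U_{\mathcal{M}}(\gamma)$ with $\gamma<\Gamma$, as guaranteed by the neighborhood $\mathcal{N}$. There $\mathcal{P}_{\mathcal{M}}$ is single-valued and $C^2$ because $\mathcal{M}$ is a compact $C^2$ (indeed $C^3$ suffices) submanifold. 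We also use two standard facts. First, $\mathcal{P}_{\mathcal{M}}(y)=y$ for $y\in\mathcal{M}$. Second, $D\mathcal{P}_{\mathcal{M}}(y)=\mathcal{P}_{T_y\mathcal{M}}$ for $y\in\mathcal{M}$, obtained by differentiating the first-order optimality condition $y-\mathcal{P}_{\mathcal{M}}(y)\in N_{\mathcal{P}(y)}\mathcal{M}$ at points of the manifold.

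The key steps run in order. First, for each $i$ write $x_i=\mathcal{P}_{\mathcal{M}}(x_i)$ and Taylor-expand around $\bar{x}$ with $u_i:=x_i-\bar{x}$:
\[
x_i=\bar{x}+\mathcal{P}_{T_{\bar{x}}\mathcal{M}}(u_i)+r_i,\qquad \Vert r_i\Vert\le \tfrac{M}{2}\Vert u_i\Vert^2 .
\]
The constant $M/2$ follows from the integral form of the remainder with $M=\max\Vert D^2\mathcal{P}_{\mathcal{M}}\Vert$ taken over the relevant tube. This needs the segment $[\bar{x},x_i]$ to stay in the tube, which holds by the neighborhood assumption; otherwise one can use the Lipschitz bound and enlarge $M$. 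Second, average over $i$:
\[
\hat{x}-\bar{x}=\mathcal{P}_{T_{\bar{x}}\mathcal{M}}(\hat{x}-\bar{x})+\frac1n\sum_i r_i .
\]
Third, use the fact that $\bar{x}$ is the nearest point to $\hat{x}$, so $\hat{x}-\bar{x}\in N_{\bar{x}}\mathcal{M}$ and hence $\mathcal{P}_{T_{\bar{x}}\mathcal{M}}(\hat{x}-\bar{x})=0$. The identity therefore collapses to $\hat{x}-\bar{x}=\frac1n\sum_i r_i$. Taking norms gives
\[
\Vert\hat{x}-\bar{x}\Vert\le \frac{M}{2n}\sum_i\Vert x_i-\bar{x}\Vert^2=\frac{M}{2}\frac{\Vert\mathbf{x}-\bar{\mathbf{x}}\Vert^2}{n},
\]
which is stronger than the claim.

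The main obstacle is justifying the Taylor remainder rigorously: the expansion is along segments in the ambient space, and those segments leave $\mathcal{M}$. We need $\mathcal{P}_{\mathcal{M}}$ to be $C^2$ with bounded second derivative on the convex hull of the points involved. For that we rely on proximal smoothness (Definition~\ref{smooth}) and on the points being close, $\Vert x_i-\bar{x}\Vert<\Gamma$. If this is delicate, a fallback avoids second derivatives of $\mathcal{P}_{\mathcal{M}}$ altogether. Take $v=\hat{x}-\bar{x}\in N_{\bar{x}}\mathcal{M}$ in Definition~\ref{smooth}(ii) and average over $y=x_i$:
\[
\Vert v\Vert^2=\frac1n\sum_i\langle v,x_i-\bar{x}\rangle\le\frac{\Vert v\Vert}{2\Gamma}\frac{\Vert\mathbf{x}-\bar{\mathbf{x}}\Vert^2}{n}.
\]
This gives the bound with constant $1/(2\Gamma)$, so the statement holds with $M$ replaced by any constant at least $1/(2\Gamma)$. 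This fallback is clean and fully rigorous, so I would present it as the primary argument if the $D^2$ bound is awkward.
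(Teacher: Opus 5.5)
The paper does not prove this lemma; it imports it from \cite{deng2023decentralized}. The constant, built from $D^2\mathcal{P}_{\mathcal{M}}$, indicates a second-order Taylor expansion of the projection, which is the same idea as your primary route.

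\textbf{The gap in the primary route.} The step that fails is the remainder estimate $\|r_i\|\le\frac{M}{2}\|x_i-\bar{x}\|^2$. It needs a bound on $D^2\mathcal{P}_{\mathcal{M}}$ along the whole segment $[\bar{x},x_i]$, and neither ingredient you invoke supplies one.
\begin{enumerate}
\item The statement's $M$ is a maximum over $\mathcal{M}$ only, not over a tube.
\item Your claim that the segment stays in the tube ``by the neighborhood assumption'' is wrong: $\mathbf{x}\in\mathcal{N}$ controls only $\|\hat{x}-\bar{x}\|$, not $\max_i\|x_i-\bar{x}\|$.
\end{enumerate}
For a concrete failure, take the unit sphere ($\Gamma=1$), $n\ge 4$, $x_1=\cdots=x_{n-1}=e$ and $x_n=-e$. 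Then $\bar{x}=e$ and $\|\hat{x}-\bar{x}\|=2/n\le\Gamma/2$, yet $[\bar{x},x_n]$ passes through the origin, where $\mathcal{P}_{\mathcal{M}}$ is not even defined.

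The expansion is legitimate only under three extra conditions:
\begin{enumerate}
\item $\max_i\|x_i-\bar{x}\|\le\Gamma$, so that every point of $[\bar{x},x_i]$ lies within $\min\{s,1-s\}\|x_i-\bar{x}\|\le\Gamma/2$ of $\mathcal{M}$;
\item $M$ is taken over $U_{\mathcal{M}}(\Gamma/2)$ rather than over $\mathcal{M}$;
\item $\mathcal{M}$ is of class $C^3$, since a $C^2$ submanifold only gives a $C^1$ projection.
\end{enumerate}

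\textbf{The gap cannot be closed, because the statement is false.} With this $M$, the inequality fails for general compact submanifolds. Take a planar closed curve shaped like a dumbbell, whose neck is formed by the arcs $\{(s,\pm(\epsilon+s^2)) : |s|\le 1\}$ and whose curvature is bounded independently of $\epsilon$. At a point of a curve, $D^2\mathcal{P}_{\mathcal{M}}$ is the local curvature times a fixed bilinear form, so $M=\mathcal{O}(1)$. On the other hand $\Gamma\le\epsilon$, since the origin has two nearest points.

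Now set $x_1=\cdots=x_{n-1}=(0,\epsilon)$ and $x_n=(0,-\epsilon)$. Then $\bar{x}=(0,\epsilon)$ and $\|\bar{x}-\hat{x}\|=2\epsilon/n$, while $M\|\mathbf{x}-\bar{\mathbf{x}}\|^2/n=4M\epsilon^2/n$. The inequality fails as soon as $\epsilon<1/(2M)$, and for $n$ large this configuration even lies in $\mathcal{N}$.

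\textbf{Your fallback is the correct version of the lemma.} It is more than a convenience:
\begin{enumerate}
\item It is rigorous and needs no tube condition.
\item It holds for every $\mathbf{x}\in\mathcal{M}^n$ and every choice $\bar{x}\in\mathcal{P}_{\mathcal{M}}(\hat{x})$. Any nearest point gives $\hat{x}-\bar{x}\in N_{\bar{x}}\mathcal{M}$, and Definition~\ref{smooth}(ii) holds for all $y\in\mathcal{M}$.
\item It is sharp: the sphere example above attains equality.
\end{enumerate}
It implies the stated bound only when $M\ge 1/(2\Gamma)$. That holds for the sphere but not for the dumbbell.

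The lemma should therefore be stated with $1/(2\Gamma)$ (or $\max\{M,1/(2\Gamma)\}$) in place of $M$. This costs nothing downstream: Lemma~\ref{keyx} and the proof of Lemma~\ref{last} only use that some such constant exists.
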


\begin{lemma}
\label{f}
    \cite{deng2023decentralized} Let $F= 2\max_{x \in \mathcal{M}} \Vert D^2 \mathcal{P}_{T_x \mathcal{M}}(\cdot) \Vert$. For any $\mathbf{x} \in \mathcal{M}^n$, we have
    \begin{equation}
    \left\| \sum_{i=1}^n \operatorname{grad} \varphi^t_i(\mathbf{x}) \right\| \leq \sqrt{n} F \Vert \mathbf{x}-\bar{\mathbf{x}} \Vert^2 .
    \end{equation}
\end{lemma}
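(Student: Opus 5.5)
The idea is to exploit a cancellation. The Euclidean gradients of $\varphi^t$ sum to zero, so the Riemannian gradients can only fail to sum to zero through the variation of the tangent projector from agent to agent. That variation is first order in the consensus error, and it multiplies a quantity that is itself first order, which gives a quadratic bound.

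\emph{Step 1: write the gradients explicitly.} We have $\nabla_{x_i}\varphi^t(\mathbf{x}) = x_i-\sum_j W^t_{ij}x_j=\sum_j W^t_{ij}(x_i-x_j)$. We write $\nabla\varphi^t_i$ for this vector, and set $\operatorname{grad}\varphi^t_i(\mathbf{x})=\mathcal{P}_{T_{x_i}\mathcal{M}}(\nabla\varphi^t_i)$. Since $W^t$ is symmetric and doubly stochastic (Assumption~\ref{weight}), $\sum_i\nabla\varphi^t_i=\big((\mathbf{1}_n^\top(I-W^t))\otimes I_d\big)\mathbf{x}=0$. Because $\mathcal{P}_{T_{\bar x}\mathcal{M}}$ is linear, it follows that $\sum_i \mathcal{P}_{T_{\bar x}\mathcal{M}}(\nabla\varphi^t_i)=0$. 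Subtracting this zero sum gives
\begin{equation*}
\sum_{i=1}^n \operatorname{grad}\varphi^t_i(\mathbf{x})=\sum_{i=1}^n\big(\mathcal{P}_{T_{x_i}\mathcal{M}}-\mathcal{P}_{T_{\bar x}\mathcal{M}}\big)(\nabla\varphi^t_i).
\end{equation*}

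\emph{Step 2: Lipschitz bound on the projector field.} The map $x\mapsto \mathcal{P}_{T_x\mathcal{M}}$ is $C^1$ on the compact $C^2$ submanifold $\mathcal{M}$. It extends smoothly to the tube $U_{\mathcal{M}}(\gamma)$ via $x\mapsto\mathcal{P}_{T_{\mathcal{P}_\mathcal{M}(x)}\mathcal{M}}$. By a mean-value argument its operator-norm Lipschitz constant is controlled by the maximal derivative, and the constant $F=2\max\Vert D^2\mathcal{P}_{T_x\mathcal{M}}\Vert$ is exactly tailored to bound it. This gives
\begin{equation*}
\Vert\mathcal{P}_{T_{x_i}\mathcal{M}}-\mathcal{P}_{T_{\bar x}\mathcal{M}}\Vert_{\mathrm{op}}\le F\Vert x_i-\bar x\Vert .
\end{equation*}
This is the step I expect to be the main technical obstacle. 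The segment from $x_i$ to $\bar x$ need not lie in $\mathcal{M}$, so I would either integrate along a curve in $\mathcal{M}$ whose length is comparable to the chord (compactness gives this), or use the tube extension. In either case the constant may pick up a harmless factor, which is absorbed by the slack $\sqrt n$ in the statement.

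\emph{Step 3: combine.} By the triangle inequality and Cauchy--Schwarz,
\begin{equation*}
\Big\Vert\sum_i \operatorname{grad}\varphi^t_i(\mathbf{x})\Big\Vert\le F\sum_i\Vert x_i-\bar x\Vert\,\Vert\nabla\varphi^t_i\Vert\le F\Vert\mathbf{x}-\bar{\mathbf{x}}\Vert\,\Vert\nabla\varphi^t(\mathbf{x})\Vert .
\end{equation*}
As in the proof of Lemma~\ref{keyx}, we have
\begin{equation*}
\Vert\nabla\varphi^t(\mathbf{x})\Vert=\Vert(I_{nd}-\mathbf{W}^t)(\mathbf{x}-\bar{\mathbf{x}})\Vert\le 2\Vert\mathbf{x}-\bar{\mathbf{x}}\Vert .
\end{equation*}
Here we use that $(I_{nd}-\mathbf{W}^t)$ annihilates the consensus vector $\bar{\mathbf{x}}$ and that the eigenvalues of $W^t$ lie in $(-1,1]$. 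This yields a bound $cF\Vert\mathbf{x}-\bar{\mathbf{x}}\Vert^2$ with an absolute constant $c$. Since $n\ge1$, this is at most $\sqrt n F\Vert\mathbf{x}-\bar{\mathbf{x}}\Vert^2$ once the factor from Step 2 is matched to the definition of $F$, which absorbs the factor $2$. If the bookkeeping in Step 2 produces a larger absolute constant, I would keep it inside $F$, since the paper only uses $F$ as a manifold-dependent constant.
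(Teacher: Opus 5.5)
The paper gives no proof of this lemma; it only cites \cite{deng2023decentralized}, so your argument has to stand on its own. Steps~1 and~3 are correct, and they give more than is claimed. Let $L_P$ denote the Lipschitz constant of $x\mapsto\mathcal{P}_{T_x\mathcal{M}}$ on $\mathcal{M}$ with respect to the Euclidean (chord) distance. Then you get $\|\sum_i\operatorname{grad}\varphi^t_i(\mathbf{x})\|\le 2L_P\|\mathbf{x}-\bar{\mathbf{x}}\|^2$, with no $\sqrt{n}$.

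The gap is Step~2. There you bound $L_P$ by the paper's $F$ (up to the factor~2) and treat any discrepancy as a harmless factor absorbed by $\sqrt{n}$. But $L_P$ is a global quantity, and no derivative evaluated on $\mathcal{M}$ controls it. Here is a counterexample:
\begin{itemize}
\item Take $\mathcal{M}\subset\mathbb{R}^2$ to be the union of the unit circles centred at $(0,\pm(1+\epsilon))$, or a closed curve containing these two arcs.
\item Take $x=(s,\,1+\epsilon-\sqrt{1-s^2})$ and $y=(s,\,-1-\epsilon+\sqrt{1-s^2})$ with $s=\sqrt{2\epsilon}$. Then $\|x-y\|\approx 4\epsilon$, but $\|\mathcal{P}_{T_x\mathcal{M}}-\mathcal{P}_{T_y\mathcal{M}}\|_{\mathrm{op}}\approx 2s$. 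So $L_P$ is at least of order $(2\epsilon)^{-1/2}$.
\item Yet every derivative of $\mathcal{P}_{\mathcal{M}}$, or of the projector field, at a point of $\mathcal{M}$ is that of a single unit circle.
\item Put one agent at $y$ and the rest at $x$. The cancellation of Step~1 gives exactly $\sum_i\operatorname{grad}\varphi^t_i(\mathbf{x})=(1-W^t_{nn})(\mathcal{P}_{T_y\mathcal{M}}-\mathcal{P}_{T_x\mathcal{M}})(y-x)$, whose norm is $\approx(1-W^t_{nn})\,2s\,\|y-x\|$.
\item Meanwhile $\|\mathbf{x}-\bar{\mathbf{x}}\|^2\le\|y-x\|^2$, because $\bar{x}$ minimizes $\sum_i\|x_i-z\|^2$ over $z\in\mathcal{M}$.
\item So the ratio of the left-hand side to $\|\mathbf{x}-\bar{\mathbf{x}}\|^2$ is at least about $(1-W^t_{nn})/\sqrt{2\epsilon}$, which is unbounded as $\epsilon\to0$ for fixed $n$.
\end{itemize}
Hence the inequality with $F$ read literally, as a maximum over points of $\mathcal{M}$, is false in general, with or without the $\sqrt{n}$. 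Neither of your patches can produce it. Integrating along curves in $\mathcal{M}$ brings in the intrinsic-to-chord distance ratio, which is global and meaningless if $\mathcal{M}$ is disconnected. The tube extension only covers chords shorter than roughly $\Gamma$.

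What your argument does prove is the lemma with $F:=2L_P$, which is your closing fallback. State it that way, and drop the ``absolute constant'' and $\sqrt{n}$-absorption language. Also justify $L_P<\infty$ properly: the field is $C^1$ on $\mathcal{M}$, hence Lipschitz on small graph-chart neighbourhoods where chord and chart distances are comparable. Pairs with $\|x-y\|\ge\delta$ are handled by $\|\mathcal{P}_{T_x\mathcal{M}}-\mathcal{P}_{T_y\mathcal{M}}\|_{\mathrm{op}}\le 1$.

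A cleaner route bypasses the projector field and gives an explicit constant in the paper's own terms. Since $\nabla\varphi^t_i=\sum_jW^t_{ij}(x_i-x_j)$ and $\sum_{i,j}W^t_{ij}(x_i-x_j)=0$ by symmetry, we have $\sum_i\operatorname{grad}\varphi^t_i(\mathbf{x})=-\sum_{i,j}W^t_{ij}\,\mathcal{P}_{N_{x_i}\mathcal{M}}(x_i-x_j)$. Apply Definition~\ref{smooth}(ii) at $x_i$, with $v=\mathcal{P}_{N_{x_i}\mathcal{M}}(x_j-x_i)$ and the point $x_j\in\mathcal{M}$. This gives $\|v\|^2=\langle v,x_j-x_i\rangle\le\frac{\|v\|}{2\Gamma}\|x_j-x_i\|^2$. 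Hence $\|\sum_i\operatorname{grad}\varphi^t_i(\mathbf{x})\|\le\frac{1}{2\Gamma}\sum_{i,j}W^t_{ij}\|x_i-x_j\|^2=\frac{2}{\Gamma}\varphi^t(\mathbf{x})\le\frac{2}{\Gamma}\|\mathbf{x}-\bar{\mathbf{x}}\|^2$. This needs no regularity of $x\mapsto\mathcal{P}_{T_x\mathcal{M}}$. Moreover, $\Gamma$ is exactly the kind of global quantity the example shows must appear (there $\Gamma\approx\epsilon$).
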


\subsection{Uniform boundedness}

In the following two lemmas, we investigate the uniform boundedness of $\Vert s_{i,k} \Vert$ and $\Vert m_{i,k} \Vert$, which is useful in proving Lemma \ref{last}.

\begin{lemma}
\label{mom}
    Under Assumptions~\ref{weight} and \ref{lipschitz}. Let $\{\mathbf{x}_k\}$ be generated by Algorithm~\ref{alg2}. For all $k$, $\mathbf{x}_k \in \mathcal{N}$, we have \\
    (i) Uniform boundedness of $\Vert m_{i,k} \Vert$:
    \begin{equation}
    \Vert m_{i,k}\Vert \leq \frac{L_g}{1-\beta}, \quad \forall i \in[n],
    \end{equation}
    (ii) Summed boundedness of the square of $\Vert m_{i,k} \Vert$:
    \begin{equation}
    \sum_{k=0}^K \Vert m_{i,k} \Vert^2 \leq \frac{2}{(1-\beta)^2} \sum_{k=0}^K \Vert g_{i,k} \Vert^2.
    \end{equation}
    (iii) Summed boundedness of the square of $\Vert \mathbf{m}_{k+1}-\mathbf{m}_{k} \Vert$:
    \begin{equation}
    \sum_{k=0}^K \Vert \mathbf{m}_{k+1}-\mathbf{m}_{k} \Vert^2 \leq \frac{2L_g^2}{(1-\beta)^2} \sum_{k=0}^K \Vert \mathbf{x}_{k+1}-\mathbf{x}_{k} \Vert^2.
    \end{equation}
\end{lemma}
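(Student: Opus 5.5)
The plan is to unroll the momentum recursion from step 4 of Algorithm~\ref{alg2}. With $m_{i,0}=0$ this gives the closed form
\[
m_{i,k}=\sum_{j=0}^{k-1}\beta^{k-1-j} g_{i,j}.
\]
Each of the three claims then reduces to a statement about an exponentially weighted sum.

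For (i), bound each term through a uniform bound on the Riemannian gradients. Since $\mathcal{P}_{T_x\mathcal{M}}$ is an orthogonal projection, $\Vert g_{i,j}\Vert\le\Vert\nabla f_i(x_{i,j})\Vert\le L_f$, because $x_{i,j}\in\mathcal{M}$. I will take the paper's convention that this is dominated by $L_g=L+L_f/\Gamma$; this holds when $\Gamma\le1$, and otherwise I will note that $L_g$ should be read as $\max\{L_g,L_f\}$. The triangle inequality and the geometric series $\sum_{j}\beta^{k-1-j}\le 1/(1-\beta)$ then give $\Vert m_{i,k}\Vert\le L_g/(1-\beta)$.

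For (ii), apply Cauchy--Schwarz with weights $\beta^{k-1-j}$:
\[
\Vert m_{i,k}\Vert^2\le\Big(\sum_{j}\beta^{k-1-j}\Big)\sum_{j}\beta^{k-1-j}\Vert g_{i,j}\Vert^2\le\frac{1}{1-\beta}\sum_{j=0}^{k-1}\beta^{k-1-j}\Vert g_{i,j}\Vert^2 .
\]
Sum over $k=0,\dots,K$ and swap the order of summation. Each $\Vert g_{i,j}\Vert^2$ is multiplied by at most $\sum_{k>j}\beta^{k-1-j}\le 1/(1-\beta)$. This yields the bound with constant $1/(1-\beta)^2$, which is at most the stated $2/(1-\beta)^2$.

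For (iii), which I expect to be the main obstacle, a direct difference gives $m_{i,k+1}-m_{i,k}=\beta(m_{i,k}-m_{i,k-1})+(g_{i,k}-g_{i,k-1})$. Here the indexing is delicate, since the first difference $m_{i,1}-m_{i,0}=g_{i,0}$ is not a gradient difference. I would unroll this linear recursion for $d_{i,k}:=m_{i,k+1}-m_{i,k}$ to get
\[
d_{i,k}=\sum_{j=1}^{k}\beta^{k-j}(g_{i,j}-g_{i,j-1})+\beta^k g_{i,0}.
\]
Then I apply the same weighted Cauchy--Schwarz and summation swap as in (ii), together with Lemma~\ref{lem1} (both points lie on $\mathcal{M}$), to obtain $\Vert g_{i,j}-g_{i,j-1}\Vert\le L_g\Vert x_{i,j}-x_{i,j-1}\Vert$. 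Summing over $i$ then gives the factor $L_g^2/(1-\beta)^2$ times $\sum_k\Vert\mathbf{x}_{k+1}-\mathbf{x}_k\Vert^2$, up to an index shift.

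The factor $2$ in (iii) comes from splitting $\Vert a+b\Vert^2\le2\Vert a\Vert^2+2\Vert b\Vert^2$ to separate the initial term $\beta^k g_{i,0}$. This boundary term is not controlled by $\Vert\mathbf{x}_{k+1}-\mathbf{x}_k\Vert^2$. I would handle it either by adopting the convention $g_{i,-1}:=0$ with $x_{i,-1}:=x_{i,0}$, so that it is absorbed, or by recording it as an additive $\mathcal{O}(L_g^2 n/(1-\beta^2))$ constant. Such a constant is harmless for the $\mathcal{O}(1/K)$ telescoping in Theorem~\ref{final}. I would state this caveat explicitly.
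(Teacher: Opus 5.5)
Your route is essentially the paper's. The paper applies the key lemma of Xu et al.\ to the scalar recursions $\Vert m_{i,k+1}\Vert\le\beta\Vert m_{i,k}\Vert+\Vert g_{i,k}\Vert$ and $\Vert\mathbf{m}_{k+1}-\mathbf{m}_k\Vert\le\beta\Vert\mathbf{m}_k-\mathbf{m}_{k-1}\Vert+L_g\Vert\mathbf{x}_k-\mathbf{x}_{k-1}\Vert$. That lemma amounts to your unrolling, weighted Cauchy--Schwarz, summation swap, and splitting off of the initial term. Doing it by hand gives you the sharper constant $1/(1-\beta)^2$ in (ii), because $m_{i,0}=0$ leaves nothing to split off. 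In (i) your justification via $L_f$ is the right one. The paper cites Lemma~\ref{lem1} for $\Vert\operatorname{grad} f_i\Vert\le L_g$, but that lemma gives only Lipschitz continuity, not a norm bound.

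On (iii) your caveat is correct, and the paper's own proof confirms it. What it actually derives is
\[
\sum_{k=0}^K\Vert\mathbf{m}_{k+1}-\mathbf{m}_k\Vert^2\le\frac{2L_g^2}{(1-\beta)^2}\sum_{k=0}^K\Vert\mathbf{x}_{k+1}-\mathbf{x}_k\Vert^2+\frac{2}{1-\beta^2}\Vert\mathbf{m}_1-\mathbf{m}_0\Vert^2,
\]
with $\mathbf{m}_1-\mathbf{m}_0=\mathbf{g}_0$, and the last term is silently dropped from the statement. Your unrolled formula reproduces exactly this bound.

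The printed (iii) is in fact false. Take identical $f_i$ and a consensual $\mathbf{x}_0$ with $\mathbf{g}_0\neq 0$; all agents then move identically, so $\mathbf{x}_k\in\mathcal{N}$ for every $k$. At $K=0$ the left side equals $\Vert\mathbf{g}_0\Vert^2$ for every $\alpha$, while $\mathbf{x}_1\to\mathbf{x}_0$ as $\alpha\to0$, so the right side tends to $0$.

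For the same reason your first remedy is invalid. Setting $x_{i,-1}:=x_{i,0}$ forces $g_{i,-1}=g_{i,0}$, so declaring $g_{i,-1}:=0$ and then invoking Lemma~\ref{lem1} would assert $\Vert g_{i,0}\Vert\le 0$. Drop that option and keep the additive constant $\frac{2}{1-\beta^2}\Vert\mathbf{g}_0\Vert^2$. It enters Lemma~\ref{g_g} only as an extra constant $\frac{2\tilde{\mathcal{C}}_3}{n(1-\beta^2)}\Vert\mathbf{g}_0\Vert^2$ next to $\tilde{\mathcal{C}}_4$. The prefactor $(1-\beta)^2/2$ there cancels the $\frac{1}{1-\beta^2}$, so within the paper's argument the rates of Theorem~\ref{final} are unchanged, as you claimed.
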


\begin{proof}
Based on Lemma~\ref{lem1}, we have $\Vert \operatorname{grad} f_i\left(x_{i, k}\right) \Vert  \leq L_g$. It follows from Eq.(\ref{m_s}) that
\begin{equation}
\begin{aligned}
    & \Vert m_{i,k}\Vert \\
    & \leq (1 + \beta + \beta^2 + \cdots + \beta^{k-1}) \max_{j=0, \cdots,k-1} \Vert g_{i,j} \Vert \\
    & \leq \frac{1}{1-\beta} \max_{j=0, \cdots,k-1} \Vert g_{i,j} \leq \Vert \leq \frac{L_g}{1-\beta} .
\end{aligned}
\end{equation}
According to Eq.(\ref{momentum}), we have
\begin{equation}
     \Vert m_{i,k+1} \Vert \leq \beta \Vert m_{i,k} \Vert + \Vert g_{i,k} \Vert.
\end{equation}
Since $\beta \in [0,1)$, applying the key lemma in~\cite{xu2015augmented} yields
\begin{equation}
    \sum_{k=0}^K \Vert m_{i,k} \Vert^2 \leq \frac{2}{(1-\beta)^2} \sum_{k=0}^K \Vert g_{i,k} \Vert^2 + \frac{2}{1-\beta^2} \Vert m_{i,0} \Vert^2 ,
\end{equation}
where $m_{i,0}=0$. Furthermore, it follows from Lemma \ref{lem1} that
\begin{equation}
\begin{aligned}
     & \Vert \mathbf{m}_{k+1}-\mathbf{m}_{k} \Vert \leq \beta \Vert \mathbf{m}_{k}-\mathbf{m}_{k-1} \Vert + \Vert \mathbf{g}_{k}-\mathbf{g}_{k-1} \Vert \\
     & \leq \beta \Vert \mathbf{m}_{k}-\mathbf{m}_{k-1} \Vert + L_g \Vert \mathbf{x}_{k}-\mathbf{x}_{k-1} \Vert .
\end{aligned}
\end{equation}
Similarly, we have
\begin{equation}
\begin{aligned}
    & \sum_{k=0}^K \Vert \mathbf{m}_{k+1}-\mathbf{m}_{k} \Vert^2 \\
    & \leq \frac{2L_g^2}{(1-\beta)^2} \sum_{k=0}^K \Vert \mathbf{x}_{k+1}-\mathbf{x}_{k} \Vert^2 + \frac{2}{1-\beta^2} \Vert \mathbf{m}_{1}-\mathbf{m}_{0} \Vert^2 .
\end{aligned}
\end{equation}
The proof is completed.
\end{proof}

\begin{lemma}
\label{s}
    Under Assumptions~\ref{weight} and \ref{lipschitz}. Let $\{\mathbf{x}_k\}$ be generated by Algorithm~\ref{alg2}. If $\mathbf{x}_0 \in \mathcal{N}$ and $t\geq \max\left\{\log_{\sigma^2}\left(\frac{1}{4\sqrt{n}}\right)\right\}$, it follows that
    \begin{equation}
        \Vert s_{i,k}\Vert \leq \frac{4L_g}{1-\beta}, \quad \forall i \in[n].
    \end{equation}
\end{lemma}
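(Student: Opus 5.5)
I will prove the bound by induction on $k$, working with the stacked recursion $\mathbf{s}_{k+1}=\mathbf{W}^t\mathbf{s}_k+\mathbf{m}_{k+1}-\mathbf{m}_k$. I will split $\mathbf{s}_k$ into its average part $\mathbf{1}_n\otimes\hat{s}_k$ and its disagreement part $\mathbf{s}_k-\mathbf{1}_n\otimes\hat{s}_k$, and bound each separately.

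\textbf{Average part.} Since $W$ is doubly stochastic and $s_{i,0}=\operatorname{grad} f_i(x_{i,0})$, averaging the recursion gives $\hat{s}_{k+1}=\hat{s}_k+\hat{m}_{k+1}-\hat{m}_k$. Telescoping, $\hat{s}_k=\hat{m}_k+(\hat{s}_0-\hat{m}_0)$. With the convention $m_{i,0}=0$ this gives $\hat{s}_k=\hat{m}_k+\hat{g}_0$. By Lemma~\ref{mom}(i) and $\Vert g_{i,k}\Vert\le L_g$, we get $\Vert\hat{s}_k\Vert\le \frac{L_g}{1-\beta}+L_g\le\frac{2L_g}{1-\beta}$. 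The initialization makes the tracking invariant exact up to this harmless constant.

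\textbf{Disagreement part.} Write $J=\frac1n\mathbf{1}\mathbf{1}^\top\otimes I_d$. Since $(\mathbf{W}^t-J)(I-J)=\mathbf{W}^t-J$ and $\Vert\mathbf{W}^t-J\Vert\le\sigma_2^t$, we have
\[
\Vert(I-J)\mathbf{s}_{k+1}\Vert\le\sigma_2^t\Vert(I-J)\mathbf{s}_k\Vert+\Vert\mathbf{m}_{k+1}-\mathbf{m}_k\Vert .
\]
For the increment I will use the crude uniform bound from Lemma~\ref{mom}(i): $\Vert m_{i,k+1}-m_{i,k}\Vert\le\frac{2L_g}{1-\beta}$, so $\Vert\mathbf{m}_{k+1}-\mathbf{m}_k\Vert\le\frac{2\sqrt{n}L_g}{1-\beta}$. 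Unrolling the recursion gives
\[
\Vert(I-J)\mathbf{s}_k\Vert\le\sigma_2^{tk}\Vert(I-J)\mathbf{s}_0\Vert+\frac{2\sqrt{n}L_g}{(1-\beta)(1-\sigma_2^t)} .
\]
The condition on $t$ gives $\sigma_2^t\le\frac{1}{4\sqrt n}\le\frac14$, hence $\frac1{1-\sigma_2^t}\le\frac43$.

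\textbf{Combining.} For the per-agent bound I use $\Vert s_{i,k}\Vert\le\Vert\hat{s}_k\Vert+\Vert s_{i,k}-\hat{s}_k\Vert$. The disagreement part is only controlled in the stacked norm, and $\Vert s_{i,k}-\hat{s}_k\Vert\le\Vert(I-J)\mathbf{s}_k\Vert$ carries a $\sqrt n$ factor. This factor has to be absorbed by $\sigma_2^t\le 1/(4\sqrt n)$, which is the point of the $\sqrt n$ in the condition on $t$.

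To make the absorption work I will strengthen the induction hypothesis to $\Vert s_{j,k}\Vert\le\frac{4L_g}{1-\beta}$ for all $j$. Then
\[
\Vert s_{i,k+1}-\hat{s}_{k+1}\Vert\le\Big\Vert\sum_j(W^t_{ij}-\tfrac1n)s_{j,k}\Big\Vert+\Vert(m_{i,k+1}-m_{i,k})-(\hat{m}_{k+1}-\hat{m}_k)\Vert .
\]
The first term is at most $\Vert W^t-\frac1n\mathbf{1}\mathbf{1}^\top\Vert_{\infty}\max_j\Vert s_{j,k}\Vert$. The row-sum norm satisfies $\Vert\cdot\Vert_\infty\le\sqrt n\,\sigma_2^t\le\frac14$, so this term is at most $\frac{L_g}{1-\beta}$.

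\textbf{Main obstacle.} The increment term naively is at most $\frac{4L_g}{1-\beta}$, which together with $\Vert\hat{s}_k\Vert\le\frac{2L_g}{1-\beta}$ overshoots the target. I would instead use the finer identity $m_{i,k+1}-m_{i,k}=g_{i,k}-(1-\beta)m_{i,k}$. This gives $\Vert m_{i,k+1}-m_{i,k}\Vert\le 2L_g$, and hence the centered increment is at most $4L_g\le\frac{4L_g}{1-\beta}$. The remaining work is juggling these constants so that the $(1-\beta)^{-1}$ factors land correctly: the average part contributes $\frac{2L_g}{1-\beta}$, and the disagreement part must fit in the remaining $\frac{2L_g}{1-\beta}$.

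Tightening the split between the two parts is where I expect the difficulty to lie. If the constant $4$ cannot be closed exactly with these estimates, I will settle for a bound of the form $\frac{cL_g}{1-\beta}$ with an absolute constant $c$, which is all that Lemma~\ref{lem5} and Lemma~\ref{last} actually use.
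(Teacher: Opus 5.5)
Your bookkeeping does not close the induction at the stated constant $4$. Assume $\max_j\Vert s_{j,k}\Vert\le\frac{4L_g}{1-\beta}$. Your three pieces then give $\Vert\hat s_{k+1}\Vert\le\frac{L_g}{1-\beta}+L_g$, a mixing term of at most $\frac{L_g}{1-\beta}$, and a centered increment of at most $4L_g$. Together these yield only $\Vert s_{i,k+1}\Vert\le\frac{2L_g}{1-\beta}+5L_g$. This exceeds $\frac{4L_g}{1-\beta}$ whenever $\beta<3/5$; at $\beta=0$ it is $7L_g$. Redistributing constants does not help: the disagreement part $\frac{L_g}{1-\beta}+4L_g$ fits into your remaining budget $\frac{2L_g}{1-\beta}$ only when $\beta\ge 3/4$.

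So as written you prove only your fallback. With hypothesis $\frac{cL_g}{1-\beta}$ the same estimates give $\frac{(c/4+6)L_g}{1-\beta}$, which closes at $c=8$. You lose a little by centering the increment. Instead write $s_{i,k+1}=\sum_j(W^t_{ij}-\frac1n)s_{j,k}+\hat s_k+(m_{i,k+1}-m_{i,k})$ and use $\Vert\hat s_k\Vert\le\frac{L_g}{1-\beta}+L_g$ and $\Vert m_{i,k+1}-m_{i,k}\Vert\le 2L_g$. This gives $\frac{(c/4+4)L_g}{1-\beta}$, which closes at $c=\frac{16}{3}$ for every $\beta$, and at $c=4$ only for $\beta\ge\frac13$. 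Your stacked-norm estimate on $(I-J)\mathbf s_k$ is not needed for any of this.

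The paper runs the same per-agent induction with the same row-sum bound $\sqrt n\,\sigma_2^t\le\frac14$, but it reaches $4$ by centering at $\hat g_k$. It writes $\Vert s_{i,k+1}-\hat g_k\Vert\le\Vert\sum_j(W^t_{ij}-\frac1n)s_{j,k}\Vert+\Vert m_{i,k+1}-m_{i,k}\Vert\le\frac{3L_g}{1-\beta}$ and then adds $\Vert\hat g_k\Vert\le L_g$. That first inequality silently uses $\frac1n\sum_j s_{j,k}=\hat g_k$. That is the invariant of plain gradient tracking, not of this recursion. Your own computation $\hat s_k=\hat m_k+\hat g_0$ shows it fails: already $\hat s_1=2\hat g_0$, and the dropped term $\hat m_k+\hat g_0-\hat g_k$ can be of order $\frac{L_g}{1-\beta}$.

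So do not try to recover $4$ by imitating that step; your treatment of the average is the correct one. With these ingredients, the lemma holds with an absolute constant such as $\frac{16}{3}$ or $8$ in place of $4$. As you note, this changes only numerical constants downstream, e.g.\ the bound $\Vert\mathbf s_k\Vert\le\frac{4\sqrt n}{1-\beta}L_g$ used in Lemma~\ref{lem5} and the constant $C$ built from it.
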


\begin{proof}
We prove it by induction on both $\Vert s_{i,k} \Vert$ and $\Vert \hat{x}_k - \bar{x}_k \Vert$. Then we have $\Vert s_{i,0}\Vert=\Vert \operatorname{grad} f_i\left(x_{i, 0}\right) \Vert \leq L_g \leq \frac{L_g}{1-\beta}$ for all $i \in [n]$ and $\Vert \hat{x}_0 - \bar{x}_0 \Vert \leq \frac{1}{2}\Gamma$. Suppose for some $k \geq 0$ that $\Vert s_{i,k}\Vert \leq \frac{4L_g}{1-\beta}$ and $\Vert \hat{x}_k - \bar{x}_k \Vert \leq \frac{1}{2}\Gamma$. We have
\begin{equation}
\begin{aligned}
&\left\| s_{i, k+1}-\hat{g}_k \right\| \\
&=\left\| \sum_{j=1}^n W_{i j}^t s_{j,k} - \hat{g}_k + m_{i, k+1}-m_{i, k}\right\| \\
& \leq \left\|\sum_{j=1}^n \left(W_{i j}^t-\frac{1}{n}\right) s_{j,k} \right\|+\left\|m_{i, k+1}-m_{i, k}\right\| \\
& \leq \sigma_2^t \sqrt{n} \max _i\left\|s_{i, k}\right\| + \Vert m_{i, k+1}\Vert + \Vert m_{i, k} \Vert \\
& \leq \sigma_2^t \sqrt{n} \max _i\left\|s_{i, k}\right\|+\frac{2L_g}{1-\beta} \\
& \leq \frac{1}{4} \max _i\left\|s_{i, k}\right\|+ \frac{2L_g}{1-\beta} \leq \frac{L_g}{1-\beta}+\frac{2L_g}{1-\beta} \leq \frac{3L_g}{1-\beta} .
\end{aligned}
\end{equation}
Hence, $\Vert s_{i, k+1}\Vert \leq \left\|s_{i, k+1}-\hat{g}_k\right\| + \left\|\hat{g}_k\right\| \leq \frac{4L_g}{1-\beta}$, where we use $\Vert \hat{g}_k \Vert \leq \frac{1}{n} \sum_{i=1}^n \Vert \operatorname{grad} f_i\left(x_{i, k}\right)\Vert \leq L_g \leq \frac{L_g}{1-\beta}$. The proof is completed.
\end{proof}

With these lemmas, we give the following lemma on the summed boundedness of $ \Vert \hat{g}_k \Vert^2$, which is a key step to prove the convergence rate in Theorem \ref{final}.

\begin{lemma}
\label{g_g}
    Under Assumptions~\ref{weight} and \ref{lipschitz}. Let $\{\mathbf{x}_k\}$ be generated by Algorithm~\ref{alg2}. If $\mathbf{x}_0 \in \mathcal{N}$, it follows that
    \begin{equation}
    \begin{aligned}
        & -\sum_{k=0}^K \Vert \hat{g}_k \Vert^2 \leq 32L_g^2\tilde{\mathcal{C}}_3 \frac{1}{n} \sum_{k=0}^K \Vert \mathbf{x}_k-\bar{\mathbf{x}}_k \Vert^2 + \frac{\tilde{\mathcal{C}}_4 (1-\beta)^2}{2} \\
        & + \left(8 L_g^2 \alpha^2 \tilde{\mathcal{C}}_3 - \frac{(1-\beta)^2}{4}\right) \frac{1}{n} \sum_{k=0}^K \Vert \mathbf{s}_k \Vert^2 .
    \end{aligned}
    \end{equation}
    
\end{lemma}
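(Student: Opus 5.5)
The plan is to relate $\hat{g}_k$ to the average tracking variable $\hat{s}_k:=\frac1n\sum_i s_{i,k}$ and then to $\mathbf{s}_k$ itself. The inequality has the form $\sum_k\|\hat g_k\|^2 \geq \frac{(1-\beta)^2}{4}\frac1n\sum_k\|\mathbf s_k\|^2 - (\text{error terms})$, so what we really need is a lower bound on the gradient average in terms of $\|\mathbf{s}_k\|$.

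\textbf{Step 1 (tracking identity).} Because $W^t$ is doubly stochastic and $s_{i,0}=\operatorname{grad} f_i(x_{i,0})$, averaging the $s$-update in (\ref{momentum}) gives $\hat s_{k+1}=\hat s_k+\hat m_{k+1}-\hat m_k$. Telescoping yields $\hat s_k=\hat m_k+(\hat s_0-\hat m_0)$, and with $m_{i,0}=0$ this gives, up to the fixed initial offset $\hat g_0$, $\hat s_k=\hat m_k=\sum_{j<k}\beta^{k-1-j}\hat g_j$. The initial offset only contributes a constant, which will be absorbed into $\tilde{\mathcal C}_4$.

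\textbf{Step 2 (from momentum to gradients).} Apply the same argument as Lemma~\ref{mom}(ii) to the averaged recursion $\|\hat m_{k+1}\|\le\beta\|\hat m_k\|+\|\hat g_k\|$. This gives $\sum_{k=0}^K\|\hat s_k\|^2\le \frac{2}{(1-\beta)^2}\sum_{k=0}^K\|\hat g_k\|^2+\text{const}$, that is,
\[
-\sum_{k}\|\hat g_k\|^2\le -\frac{(1-\beta)^2}{2}\sum_k\|\hat s_k\|^2+\text{const}.
\]

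\textbf{Step 3 (from $\hat s_k$ to $\mathbf s_k$).} Decompose $\frac1n\|\mathbf s_k\|^2\le 2\|\hat s_k\|^2+\frac2n\|\mathbf s_k-\mathbf 1\otimes\hat s_k\|^2$. This gives $-\frac{(1-\beta)^2}{2}\|\hat s_k\|^2\le-\frac{(1-\beta)^2}{4}\frac1n\|\mathbf s_k\|^2+\frac{(1-\beta)^2}{2}\frac1n\|\mathbf s_k-\mathbf 1\otimes\hat s_k\|^2$, so it remains to bound the summed tracking consensus error.

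\textbf{Step 4 (tracking consensus error).} From the $s$-recursion,
\[
\|\mathbf s_{k+1}-\mathbf 1\otimes\hat s_{k+1}\|\le\sigma_2^t\|\mathbf s_k-\mathbf 1\otimes\hat s_k\|+\|\mathbf m_{k+1}-\mathbf m_k\|.
\]
Summing squares with the same key lemma \cite{xu2015augmented} gives a $\frac{2}{(1-\sigma_2^t)^2}$ factor times $\sum_k\|\mathbf m_{k+1}-\mathbf m_k\|^2$. Lemma~\ref{mom}(iii) then bounds this by $\frac{2L_g^2}{(1-\beta)^2}\sum_k\|\mathbf x_{k+1}-\mathbf x_k\|^2$. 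The $(1-\beta)^{-2}$ here cancels the $(1-\beta)^2$ prefactor from Step 3, which is why the final coefficients $32L_g^2\tilde{\mathcal C}_3$ and $8L_g^2\alpha^2\tilde{\mathcal C}_3$ carry no $\beta$. Here $\tilde{\mathcal C}_3$ collects the $\frac{1}{(1-\sigma_2^t)^2}$-type constants.

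\textbf{Step 5 (bounding $\|\mathbf x_{k+1}-\mathbf x_k\|$).} Split
\[
\|\mathbf x_{k+1}-\mathbf x_k\|\le\|\mathbf x_{k+1}-\bar{\mathbf x}_{k+1}\|+\sqrt n\|\bar x_{k+1}-\bar x_k\|+\|\mathbf x_k-\bar{\mathbf x}_k\|.
\]
Alternatively, and more cleanly, use the projection Lipschitz property (\ref{property}) directly:
\[
\|\mathbf x_{k+1}-\mathbf x_k\|=\|\mathcal P_{\mathcal M^n}(\mathbf W^t\mathbf x_k-\alpha\mathbf v_k)-\mathcal P_{\mathcal M^n}(\mathbf x_k)\|\le R(2\|\mathbf x_k-\bar{\mathbf x}_k\|+\alpha\|\mathbf s_k\|),
\]
using $\|(I-\mathbf W^t)\mathbf x_k\|\le2\|\mathbf x_k-\bar{\mathbf x}_k\|$ and $\|\mathbf v_k\|\le\|\mathbf s_k\|$. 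Squaring gives $\|\mathbf x_{k+1}-\mathbf x_k\|^2\le 8R^2\|\mathbf x_k-\bar{\mathbf x}_k\|^2+2R^2\alpha^2\|\mathbf s_k\|^2$. Absorbing the $R^2$ factors into $\tilde{\mathcal C}_3$ produces the terms $32L_g^2\tilde{\mathcal C}_3\frac1n\sum\|\mathbf x_k-\bar{\mathbf x}_k\|^2$ and $8L_g^2\alpha^2\tilde{\mathcal C}_3\frac1n\sum\|\mathbf s_k\|^2$.

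Collecting the initial-condition constants (from $\mathbf m_1-\mathbf m_0$, from the initial tracking error, and from $\hat s_0$) into $\tilde{\mathcal C}_4(1-\beta)^2/2$ completes the argument. The main obstacle is the bookkeeping of the $(1-\beta)$ powers in Steps 2–4. In particular, the initial term $\frac{2}{1-\beta^2}\|\mathbf m_1-\mathbf m_0\|^2$ must be shown to fit into a constant multiplied by $(1-\beta)^2$. I would handle this by defining $\tilde{\mathcal C}_4$ to include the needed $(1-\beta)^{-2}$ factor, since it is a $K$-independent constant.
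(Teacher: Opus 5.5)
Your skeleton is the paper's: the key lemma of \cite{xu2015augmented} on the averaged momentum recursion, the split of $\frac1n\|\mathbf s_k\|^2$ into an average part and a tracking error, the $\sigma_2^t$-contraction of that error driven by $\|\mathbf m_{k+1}-\mathbf m_k\|$, and Lemma~\ref{mom}(iii). The gap is in Step~1.

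You correctly find that with $s_{i,0}=\operatorname{grad} f_i(x_{i,0})$ and $m_{i,0}=0$ one has $\hat s_k=\hat m_k+\hat g_0$ for \emph{every} $k$. But this offset does not ``only contribute a constant'', because it appears in each of the $K+1$ summands. Step~2 controls $\sum_k\|\hat m_k\|^2$. Step~3 needs $\sum_k\|\hat s_k\|^2=\sum_k\bigl(\|\hat m_k\|^2+2\langle\hat m_k,\hat g_0\rangle+\|\hat g_0\|^2\bigr)$, whose excess grows like $(K+1)\|\hat g_0\|^2$ once $\hat m_k\to0$.

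Centering at $\hat m_k$ in Step~3 does not help either. Then $\mathbf s_k-\mathbf 1\otimes\hat m_k$ has mean $\hat g_0$, which $\mathbf W^t$ does not contract, and the same linear-in-$K$ term reappears in Step~4. A $K$-dependent $\tilde{\mathcal C}_4$ would destroy the rate in Theorem~\ref{final}.

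In fact, under this initialization the inequality itself fails. Take two agents with the same $f_i(x)=\langle a,x\rangle$ on the unit circle and the common start $x_0\perp a$. The consensus error vanishes identically and $s_k=m_k+a$. For small $\alpha$ the iterates converge linearly to the minimizer $-a/\|a\|$, so $\sum_k\|\hat g_k\|^2<\infty$ while $\|s_k\|\to\|a\|$. After rearrangement, the claim bounds $\bigl(\frac{(1-\beta)^2}{4}-8L_g^2\alpha^2\tilde{\mathcal C}_3\bigr)\frac1n\sum_k\|\mathbf s_k\|^2$ by $\sum_k\|\hat g_k\|^2$ plus a constant, and this fails for large $K$.

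What the argument needs is the tracking invariant $\hat s_k=\hat m_k$ for all $k$, i.e.\ a consistent initialization $\hat s_0=\hat m_0$. One choice is $m_{i,0}=s_{i,0}=\operatorname{grad} f_i(x_{i,0})$; it keeps $\|m_{i,k}\|\le L_g/(1-\beta)$ and only adds the constant $\frac{2}{1-\beta^2}\|\hat m_0\|^2$ in Step~2. The paper's proof uses this invariant tacitly. It centers the tracking error at $\hat{\mathbf m}_k$ and asserts $\|\mathbf s_{k+1}-\hat{\mathbf m}_{k+1}\|\le\sigma_2^t\|\mathbf s_k-\hat{\mathbf m}_k\|+\|\mathbf m_{k+1}-\mathbf m_k\|$, which is valid only if $\hat s_k=\hat m_k$. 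With the invariant in place, your Steps~2--4 coincide with the paper's.

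Two smaller points. First, in Step~5 you need neither (\ref{property}) nor the absorption of $R^2$ into $\tilde{\mathcal C}_3$. The former also presumes $\mathbf W^t\mathbf x_k-\alpha\mathbf v_k$ lies in the tube; the latter alters a constant reused in the step-size condition of Theorem~\ref{final}. Since $\mathbf x_k\in\mathcal M^n$ is a feasible competitor when projecting $y=\mathbf W^t\mathbf x_k-\alpha\mathbf v_k$, one has $\|\mathbf x_{k+1}-y\|\le\|\mathbf x_k-y\|$. Hence $\|\mathbf x_{k+1}-\mathbf x_k\|\le2\|\mathbf x_k-y\|\le4\|\mathbf x_k-\bar{\mathbf x}_k\|+2\alpha\|\mathbf v_k\|$. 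This is the paper's bound, and it yields exactly $32L_g^2\tilde{\mathcal C}_3$ and $8L_g^2\alpha^2\tilde{\mathcal C}_3$ with $\tilde{\mathcal C}_3=2/(1-\sigma_2^t)^2$.

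Second, the initial term $\frac{2}{1-\beta^2}\|\mathbf m_1-\mathbf m_0\|^2$ of Lemma~\ref{mom}(iii) only forces a factor $\frac{1}{1-\beta^2}$ into $\tilde{\mathcal C}_4$, not $(1-\beta)^{-2}$. The paper simply drops this term.
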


\begin{proof}
Since $\Vert \mathbf{s}_k \Vert \leq \Vert \mathbf{s}_k - \hat{\mathbf{m}}_k \Vert + \Vert \hat{\mathbf{m}}_k \Vert$, it follows from Lemma \ref{mom} that

\begin{equation}
\begin{aligned}
    & -\sum_{k=0}^K \Vert \hat{g}_k \Vert^2 \\
    & \leq -\frac{(1-\beta)^2}{2} \sum_{k=0}^K \Vert \hat{m}_k \Vert^2 = -\frac{(1-\beta)^2}{2n} \sum_{k=0}^K \Vert \hat{\mathbf{m}}_k \Vert^2 \\
    & \leq \frac{(1-\beta)^2}{2n} \sum_{k=0}^K \Vert \mathbf{s}_k - \hat{\mathbf{m}}_k \Vert^2 - \frac{(1-\beta)^2}{4n} \sum_{k=0}^K \Vert \mathbf{s}_k \Vert^2 .
\end{aligned}
\end{equation}
On the other hand, it follows from the definition of $\hat{\mathbf{m}}_{k+1}$ that
\begin{equation}
    \Vert \mathbf{s}_{k+1} - \hat{\mathbf{m}}_{k+1} \Vert \leq \sigma_2^t \Vert \mathbf{s}_k - \hat{\mathbf{m}}_k \Vert + \Vert \mathbf{m}_{k+1} - \mathbf{m}_k \Vert .
\label{s_g}
\end{equation}
Since $\sigma_2^t \in (0,1)$, applying the key lemma in~\cite{xu2015augmented} to Eq.(\ref{s_g}) yields
\begin{equation}
    \frac{1}{n} \sum_{k=0}^K \Vert \mathbf{s}_k - \hat{\mathbf{m}}_k \Vert^2 \leq \tilde{\mathcal{C}}_3 \frac{1}{n} \sum_{k=0}^K \Vert \mathbf{m}_{k+1} - \mathbf{m}_k \Vert^2 + \tilde{\mathcal{C}}_4 ,
\label{s_g2}
\end{equation}
where
\begin{equation}
\tilde{\mathcal{C}}_3 = \frac{2}{(1-\sigma_2^t)^2} ,\;\; \tilde{\mathcal{C}}_4 = \frac{2}{1 - (\sigma_2^t)^2} \frac{1}{n} \Vert \mathbf{s}_0 - \hat{\mathbf{m}}_0 \Vert^2 .
\end{equation}
Plugging Lemma \ref{mom} into Eq.(\ref{s_g2}) gives
\begin{equation}
\begin{aligned}
    & \frac{1}{n} \sum_{k=0}^K \Vert \mathbf{s}_k - \hat{\mathbf{m}}_k \Vert^2 \leq \frac{2\tilde{\mathcal{C}}_3 L_g^2}{(1-\beta)^2} \frac{1}{n} \sum_{k=0}^K \Vert \mathbf{x}_{k+1} - \mathbf{x}_k \Vert^2 + \tilde{\mathcal{C}}_4 \\
    & \leq \frac{64\tilde{\mathcal{C}}_3 L_g^2}{(1-\beta)^2} \frac{1}{n} \sum_{k=0}^K \Vert \mathbf{x}_k-\bar{\mathbf{x}}_k \Vert^2 + \frac{16\alpha^2\tilde{\mathcal{C}}_3 L_g^2}{(1-\beta)^2} \frac{1}{n} \sum_{k=0}^K \Vert \mathbf{s}_k \Vert^2 + \tilde{\mathcal{C}}_4,
\end{aligned}
\end{equation}
where the second inequality uses
\begin{equation}
\begin{aligned}
    \Vert \mathbf{x}_{k+1} - \mathbf{x}_k \Vert &\leq \Vert \mathbf{x}_{k+1} - \mathbf{x}_k + (\emph{I}_{nd}-\mathbf{W}^t)\mathbf{x}_k +\alpha \mathbf{v}_{k} \Vert \\
    & + \Vert (\emph{I}_{nd}-\mathbf{W}^t)\mathbf{x}_k +\alpha \mathbf{v}_{k} \Vert \\
    & \leq 2 \Vert (\emph{I}_{nd}-\mathbf{W}^t)\mathbf{x}_k +\alpha \mathbf{v}_{k} \Vert \\
    & \leq 4 \Vert \mathbf{x}_k-\bar{\mathbf{x}}_k \Vert + 2 \alpha \Vert \mathbf{v}_{k} \Vert .
\end{aligned}
\end{equation}
The proof is completed. 
\end{proof}

\subsection{Proofs for Lemma~\ref{last}}
\label{app2}
\begin{proof}
It follows from Lemma~\ref{lem1} that
\begin{equation}
\begin{aligned}
& \Vert \hat{g}_k - \operatorname{grad} f(\bar{x}_{k}) \Vert^2 \leq \frac{1}{n} \sum^n_{i=1} \Vert \operatorname{grad} f_i(x_{k}) - \operatorname{grad} f_i(\bar{x}_{k}) \Vert^2 \\
& \leq \frac{L_g^2}{n} \Vert \mathbf{x}_k - \bar{\mathbf{x}}_k \Vert^2 .
\end{aligned}
\end{equation}
By Young's inequality, we get
\begin{equation}
\begin{aligned}
    & \left\langle\operatorname{grad} f(\bar{x}_{k}) - \hat{g}_k, \bar{x}_{k+1} - \bar{x}_k \right\rangle \\
    & \leq \frac{L_g}{n} \Vert \mathbf{x}_k - \bar{\mathbf{x}}_k \Vert^2 + \frac{L_g}{4} \Vert \bar{x}_{k+1} - \bar{x}_k \Vert^2 .
\end{aligned}
\end{equation}
Therefore, we have
\begin{equation}
\begin{aligned}
    & f(\bar{x}_{k+1}) \\
    & \leq f(\bar{x}_{k}) + \left\langle\operatorname{grad} f(\bar{x}_{k}), \bar{x}_{k+1} - \bar{x}_k \right\rangle + \frac{L_g}{2} \Vert \bar{x}_{k+1} - \bar{x}_k \Vert^2 \\
    & = f(\bar{x}_{k}) + \left\langle\operatorname{grad} f(\bar{x}_{k}) - \hat{g}_k, \bar{x}_{k+1} - \bar{x}_k \right\rangle \\
    & + \left\langle \hat{g}_k, \bar{x}_{k+1} - \bar{x}_k \right\rangle + \frac{L_g}{2} \Vert \bar{x}_{k+1} - \bar{x}_k \Vert^2 \\
    & \leq f(\bar{x}_{k}) + \frac{L_g}{n} \Vert \mathbf{x}_k - \bar{\mathbf{x}}_k \Vert^2 + \frac{3L_g}{4} \Vert \bar{x}_{k+1} - \bar{x}_k \Vert^2 \\
    & + \left\langle \hat{g}_k, \bar{x}_{k+1} - \bar{x}_k \right\rangle \\
    & = f(\bar{x}_{k}) + \frac{L_g}{n} \Vert \mathbf{x}_k - \bar{\mathbf{x}}_k \Vert^2 + \frac{3L_g}{4} \Vert \bar{x}_{k+1} - \bar{x}_k \Vert^2 \\
    & + \left\langle \hat{g}_k, \hat{x}_{k+1} - \hat{x}_k \right\rangle  + \left\langle \hat{g}_k, \bar{x}_{k+1} - \bar{x}_k + \hat{x}_k - \hat{x}_{k+1} \right\rangle .
\end{aligned}
\end{equation}
By Young's inequality, we obtain
\begin{equation}
\begin{aligned}
    & \left\langle \hat{g}_k, \bar{x}_{k+1} - \bar{x}_k + \hat{x}_k - \hat{x}_{k+1} \right\rangle \\
    & \leq \frac{\alpha}{2(1-\beta)} \Vert \hat{g}_k \Vert^2 + \frac{1-\beta}{\alpha} \left(\Vert \hat{x}_{k} - \bar{x}_k \Vert^2 + \Vert \hat{x}_{k+1}- \bar{x}_{k+1} \Vert^2 \right) \\
    & \leq \frac{\alpha}{2(1-\beta)} \Vert \hat{g}_k \Vert^2 + \frac{M^2(1-\beta)}{n^2 \alpha } \left(\Vert \mathbf{x}_k - \bar{\mathbf{x}}_k \Vert^4 + \Vert \mathbf{x}_{k+1} - \bar{\mathbf{x}}_{k+1} \Vert^4 \right),
\end{aligned}
\end{equation}
where the second inequality utilizes Lemma~\ref{m}. Since $\frac{1}{n}\Vert \bar{\mathbf{x}}_k - \mathbf{x}_k\Vert^2 \leq C \frac{R^2}{(1-\beta)^2} L_g^2 \alpha^2$, we have
\begin{equation}
\begin{aligned}
    & f(\bar{x}_{k+1}) \\
    & \leq f(\bar{x}_{k})  + \left\langle \hat{g}_k, \hat{y}_{k+1} - \hat{x}_k \right\rangle  + \frac{3L_g}{4} \Vert \bar{x}_{k+1} - \bar{x}_k \Vert^2 \\
    &  + \left(\frac{L_g}{n} + \frac{CM^2 R^2 L_g^2 \alpha}{n(1-\beta)} \right) \Vert \mathbf{x}_k - \bar{\mathbf{x}}_k \Vert^2 \\
    & + \frac{\alpha}{2(1-\beta)} \Vert \hat{g}_k \Vert^2 + \frac{CM^2 R^2 L_g^2 \alpha}{n(1-\beta)}\Vert \mathbf{x}_{k+1} - \bar{\mathbf{x}}_{k+1} \Vert^2 .
\end{aligned}
\label{f1}
\end{equation}
For $\Vert \bar{x}_{k+1} - \bar{x}_k \Vert^2$, it follows from Lemma~\ref{keyx} that
\begin{equation}
\begin{aligned}
    & \Vert \bar{x}_{k+1} - \bar{x}_k \Vert^2 \\
    & \leq \frac{4(8Q+\sqrt{n} F + M)^2}{n^2} \Vert \mathbf{x}_k - \bar{\mathbf{x}}_k \Vert^4 + 4\alpha^2 \Vert \hat{v}_{k} \Vert^2  \\
    & + \frac{4 M^2}{n^2} \Vert \mathbf{x}_{k+1} - \bar{\mathbf{x}}_{k+1} \Vert^4 + \frac{16 Q^2 \alpha^4}{n^2}\Vert \mathbf{v}_{k} \Vert^4 \\
    & \leq \frac{4C R^2 L_g^2 (8Q+\sqrt{n} F + M)^2}{n (1-\beta)^2} \alpha^2 \Vert \mathbf{x}_k - \bar{\mathbf{x}}_k \Vert^2 + \frac{4\alpha^2}{n} \Vert \mathbf{v}_{k} \Vert^2 \\
    & + \frac{4C M^2 R^2 L_g^2 \alpha^2}{n (1-\beta)^2}  \Vert \mathbf{x}_{k+1} - \bar{\mathbf{x}}_{k+1} \Vert^2 + \frac{16 Q^2 \alpha^4}{n^2}\Vert \mathbf{v}_{k} \Vert^4 \\
    & \leq \frac{4C R^2 L_g^2 (8Q+\sqrt{n} F + M)^2}{n (1-\beta)^2} \alpha^2 \Vert \mathbf{x}_k - \bar{\mathbf{x}}_k \Vert^2 \\
    & + \frac{4C M^2 R^2 L_g^2 \alpha^2}{n (1-\beta)^2}  \Vert \mathbf{x}_{k+1} - \bar{\mathbf{x}}_{k+1} \Vert^2  \\
    & + \left(\frac{256 Q^2 L_g^2 \alpha^4}{n(1-\beta)^2} + \frac{4\alpha^2}{n} \right)\Vert \mathbf{v}_{k} \Vert^2 .
\end{aligned}
\label{f2}
\end{equation}
For $\left\langle \hat{g}_k, \hat{x}_{k+1} - \hat{x}_k \right\rangle$, it follows from Eq. (\ref{m_s}) that
\begin{equation}
\begin{aligned}
    & \left\langle \hat{g}_k, \hat{x}_{k+1} - \hat{x}_k \right\rangle \\
    & =\left\langle\hat{g}_k, \frac{1}{n} \sum_{i=1}^n\left(x_{i, k+1}-x_{i, k}+\alpha s_{i, k}+\nabla \varphi_i^t\left(\mathbf{x}_k\right)\right)\right\rangle \\
    & -\left\langle\hat{g}_k, \frac{1}{n} \sum_{i=1}^n\left(\alpha s_{i, k}+\nabla \varphi_i^t\left(\mathbf{x}_k\right)\right)\right\rangle \\
    & \leq\left\langle\hat{g}_k, \frac{1}{n} \sum_{i=1}^n\left(x_{i, k+1}-x_{i, k}+\alpha v_{i, k}+\nabla \varphi_i^t\left(\mathbf{x}_k\right)\right)\right\rangle \\
    & +\left\langle\hat{g}_k, \frac{1}{n} \sum_{i=1}^n \alpha\left(s_{i, k}-v_{i, k}\right)\right\rangle-\frac{\alpha}{1-\beta}\left\|\hat{g}_k\right\|^2 .
\end{aligned}
\label{f4}
\end{equation}
Since $s_{i,k} - v_{i,k} \in N_{x_{i,k}} \mathcal{M}$, we have
\begin{equation}
\begin{aligned}
    & \left\langle\hat{g}_k, \frac{1}{n} \sum_{i=1}^n \alpha\left(s_{i, k}-v_{i, k}\right)\right\rangle \\
    & =\frac{\alpha}{n} \sum_{i=1}^n\left\langle\hat{g}_k-\operatorname{grad} f_i\left(x_{i, k}\right), s_{i, k}-v_{i, k}\right\rangle \\
    & \leq  \frac{1}{4 n L_g} \sum_{i=1}^n\left\|\hat{g}_k-\operatorname{grad} f_i\left(x_{i, k}\right)\right\|^2 \\
    & +\frac{\alpha^2 L_g}{n} \sum_{i=1}^n\left\|\mathcal{P}_{N_{x_{i, k}} \mathcal{M}}\left(s_{i, k}\right)\right\|^2 \\
    & \leq  \frac{1}{4 n^2 L_g} \sum_{i=1}^n \sum_{j=1}^n\left\|\operatorname{grad} f_j\left(x_{j, k}\right)-\operatorname{grad} f_i\left(x_{i, k}\right)\right\|^2 \\
    & +\frac{\alpha^2 L_g}{n}\left\|\mathbf{s}_k\right\|^2 \\
    & \leq \frac{L_g}{4n}\left\|\mathbf{x}_k-\bar{\mathbf{x}}_k\right\|^2+\frac{\alpha^2 L_g}{n}\left\|\mathbf{s}_k\right\|^2.
\end{aligned}
\label{f5}
\end{equation}
Let $\nabla \varphi^t_i(\mathbf{x}_k)+\alpha v_{i,k}=d_{i,1}+d_{i,2}$, where $d_{i,1}=\mathcal{P}_{T_{x_{i,k} }\mathcal{M}}(\nabla \varphi^t_i(\mathbf{x}_k)+\alpha v_{i,k})$ and $d_{i,2}=\nabla \varphi^t_i(\mathbf{x}_k)+\alpha v_{i,k}-d_{i,1}$. Therefore, 
\begin{equation}
\begin{aligned}
    & \left\langle\hat{g}_k, \frac{1}{n} \sum_{i=1}^n\left(x_{i, k+1}-x_{i, k}+\alpha v_{i, k}+\nabla \varphi_i^t\left(\mathbf{x}_k\right)\right)\right\rangle \\
    & = \frac{1}{n} \sum_{i=1}^n\left\langle\hat{g}_k, \mathcal{P}_{\mathcal{M}}\left(x_{i, k}-d_{i, 1}-d_{i, 2}\right)-\left[x_{i, k}-d_{i, 1}\right]\right\rangle \\
    & +\frac{1}{n} \sum_{i=1}^n\left\langle\hat{g}_k, d_{i, 2}\right\rangle \\
    & \leq \frac{L_g Q}{n} \sum_{i=1}^n\left\|d_i\right\|^2+\frac{1}{n} \sum_{i=1}^n\left\langle\hat{g}_k-\operatorname{grad} f_i\left(x_{i, k}\right), d_{i, 2}\right\rangle \\
    & \leq \frac{L_g Q}{n} \sum_{i=1}^n\left\|d_i\right\|^2+\frac{1}{4 n L_g} \sum_{i=1}^n\left\|\hat{g}_k-\operatorname{grad} f_i\left(x_{i, k}\right)\right\|^2 \\
    & +\frac{L_g}{n} \sum_{i=1}^n\left\|d_{i, 2}\right\|^2 \\
    & \leq \frac{L_g Q+L_g}{n}\left\|\left(I_{n d}-\mathbf{W}^t\right) \mathbf{x}_k+\alpha \mathbf{v}_{k}\right\|^2+\frac{L_g}{4n}\left\|\bar{\mathbf{x}}_k-\mathbf{x}_k\right\|^2 \\
    & \leq \frac{32 L_g Q+33 L_g}{4n}\left\|\bar{\mathbf{x}}_k-\mathbf{x}_k\right\|^2+\frac{2 L_g Q+2 L_g}{n} \alpha^2\left\|\mathbf{s}_{k}\right\|^2 .
\end{aligned}
\label{f6}
\end{equation}
Plugging Eq.(\ref{f5}) and Eq.(\ref{f6}) into Eq.(\ref{f4}) gives
\begin{equation}
\begin{aligned}
    & \left\langle \hat{g}_k, \hat{y}_{k+1} - \hat{x}_k \right\rangle \leq \frac{32L_g Q+34L_g}{4n}\left\|\bar{\mathbf{x}}_k-\mathbf{x}_k\right\|^2 \\
    & +\frac{2L_g Q+3L_g}{n} \alpha^2\left\|\mathbf{s}_{k}\right\|^2 -  \frac{\alpha}{1-\beta} \Vert \hat{g}_k \Vert^2 .
\label{f7}
\end{aligned}
\end{equation}
Then, combining Eq.(\ref{f2}) and Eq.(\ref{f7}) gives
\begin{equation}
\begin{aligned}
    & f(\bar{x}_{k+1}) \leq f(\bar{x}_{k}) + \left(\frac{L_g}{n} + \frac{CM^2 R^2 L_g^2 \alpha}{n(1-\beta)} \right) \Vert \mathbf{x}_k - \bar{\mathbf{x}}_k \Vert^2  \\
    & + \frac{CM^2 R^2 L_g^2 \alpha}{n(1-\beta)}\Vert \mathbf{x}_{k+1} - \bar{\mathbf{x}}_{k+1} \Vert^2 - \frac{\alpha}{2(1-\beta)} \Vert \hat{g}_k \Vert^2 \\
    & + \frac{32L_g Q+34L_g}{4n} \left\|\bar{\mathbf{x}}_k-\mathbf{x}_k\right\|^2+\frac{2L_g Q+3L_g}{n}\alpha^2\left\|\mathbf{s}_{k}\right\|^2  \\
    & + \left(\frac{192 Q^2 L_g^3 \alpha^4}{n(1-\beta)^2} + \frac{3 L_g\alpha^2}{n} \right)\Vert \mathbf{s}_{k} \Vert^2 \\
    & + \frac{3C M^2 R^2 L_g^3 \alpha^2}{n(1-\beta)^2}  \Vert \mathbf{x}_{k+1} - \bar{\mathbf{x}}_{k+1} \Vert^2 \\
    & + \frac{3C R^2 L_g^3 (8Q+\sqrt{n} F + M)^2}{n(1-\beta)^2} \alpha^2 \Vert \mathbf{x}_k - \bar{\mathbf{x}}_k \Vert^2.
\end{aligned}
\end{equation}
The proof is completed.
\end{proof}

\subsection{Proofs for Theorem~\ref{final}}
\label{app3}
\begin{proof}
It follows from Lemma~\ref{key1} that
\begin{equation}
    \Vert \mathbf{x}_{k+1} - \bar{\mathbf{x}}_{k+1} \Vert \leq \sigma_2^t R \Vert \mathbf{x}_{k} - \bar{\mathbf{x}}_{k} \Vert + \alpha R \Vert \mathbf{s}_{k} \Vert .
\end{equation}
Since $\sigma_2^t R \in (0,1)$, applying the key lemma in~\cite{xu2015augmented} gives
\begin{equation}
    \frac{1}{n} \sum_{k=0}^{K+1} \Vert \mathbf{x}_{k} - \bar{\mathbf{x}}_{k} \Vert^2 \leq \tilde{\mathcal{C}}_1 \alpha^2 R^2 \frac{1}{n} \sum_{k=0}^{K+1} \Vert \mathbf{s}_{k} \Vert^2 + \tilde{\mathcal{C}}_2 ,
\label{x_m}
\end{equation}
where
\begin{equation}
    \tilde{\mathcal{C}}_1=\frac{2}{\left(1-\sigma_2^t R\right)^2}, \tilde{\mathcal{C}}_2=\frac{2}{1-\left(\sigma_2^t R\right)^2}\frac{1}{n} \Vert \mathbf{x}_{0} - \bar{\mathbf{x}}_{0} \Vert^2 .
\end{equation}
It follows from Lemma~\ref{last} that
\begin{equation}
\begin{aligned}
& f(\bar{x}_{K+1}) \leq f(\bar{x}_{0}) -\frac{\alpha}{2(1-\beta)} \sum_{k=0}^K \Vert \hat{g}_k \Vert^2 + \mathcal{C}_1 \alpha^2 \frac{1}{n} \sum_{k=0}^K \Vert \mathbf{s}_{k} \Vert^2 \\
& + (\mathcal{C}_2+\mathcal{C}_3) \frac{1}{n} \sum_{k=0}^{K+1} \Vert \mathbf{x}_k - \bar{\mathbf{x}}_k \Vert^2 \\
& \leq f(\bar{x}_{0}) + \mathcal{C}_1 \alpha^2 \frac{1}{n} \sum_{k=0}^K \Vert \mathbf{s}_k \Vert^2 + (\mathcal{C}_2+\mathcal{C}_3) \frac{1}{n} \sum_{k=0}^{K+1} \Vert \mathbf{x}_k - \bar{\mathbf{x}}_k \Vert^2 \\
& + \frac{\tilde{\mathcal{C}}_4 \alpha (1-\beta)}{4} + \frac{16L_g^2\alpha\tilde{\mathcal{C}}_3}{1-\beta} \frac{1}{n} \sum_{k=0}^K  \Vert \mathbf{x}_k-\bar{\mathbf{x}}_k \Vert^2 \\
& + \left(\frac{4 L_g^2 \alpha^3 \tilde{\mathcal{C}}_3}{1-\beta} - \frac{(1-\beta) \alpha}{8} \right) \frac{1}{n} \sum_{k=0}^K \Vert \mathbf{s}_k \Vert^2, 
\end{aligned}
\end{equation}
where the second inequality uses Lemma~\ref{g_g}. It follows from Eq.(\ref{x_m}) that
\begin{equation}
\begin{aligned}
& f(\bar{x}_{K+1}) \leq f(\bar{x}_{0}) + \mathcal{C}_1 \alpha^2 \frac{1}{n} \sum_{k=0}^K \Vert \mathbf{s}_k \Vert^2 + \frac{\tilde{\mathcal{C}}_4(1-\beta) \alpha}{4} \\
& + \left(\frac{4 L_g^2 \alpha^3 \tilde{\mathcal{C}}_3}{1-\beta} - \frac{(1-\beta) \alpha}{8} \right) \frac{1}{n} \sum_{k=0}^K \Vert \mathbf{s}_k \Vert^2 \\
& + \tilde{\mathcal{C}}_1 \alpha^2 R^2\left(\frac{16L_g^2\alpha\tilde{\mathcal{C}}_3}{1-\beta} + \mathcal{C}_2+\mathcal{C}_3\right) \frac{1}{n} \sum_{k=0}^K \Vert \mathbf{s}_k \Vert^2 \\
& + \tilde{\mathcal{C}}_2\left(\frac{16L_g^2\alpha\tilde{\mathcal{C}}_3}{1-\beta} + \mathcal{C}_2+\mathcal{C}_3\right) + \frac{16\tilde{\mathcal{C}}_1(\mathcal{C}_2+\mathcal{C}_3) \alpha^2 R^2 L_g^2}{(1-\beta)^2} .
\end{aligned}
\end{equation}
Let $\mathcal{C}_4=\tilde{\mathcal{C}}_2\left(\frac{16L_g^2\alpha\tilde{\mathcal{C}}_3}{1-\beta} + \mathcal{C}_2+\mathcal{C}_3\right) + \frac{16\tilde{\mathcal{C}}_1(\mathcal{C}_2+\mathcal{C}_3) \alpha^2 R^2 L_g^2}{(1-\beta)^2}$. Since $\alpha < \min \left\{1,\frac{(1-\beta)^2}{32\mathcal{C}_1 (1-\beta) + 128 L_g^2 \tilde{\mathcal{C}}_3(1+4R^2)} \right\}$, it follows from Lemma~\ref{mom} that
\begin{equation}
\begin{aligned}
    & f(\bar{x}_{K+1}) \leq f(\bar{x}_{0}) + \mathcal{C}_4 + \frac{\tilde{\mathcal{C}}_4 (1-\beta) \alpha}{4} \\
    & - \frac{(1-\beta) \alpha}{4} \left(\frac{1}{2} - \frac{8\mathcal{C}_1 \alpha}{1-\beta} - \frac{32L_g^2\alpha^2\tilde{\mathcal{C}}_3(1+4R^2)}{(1-\beta)^2} \right) \frac{1}{n} \sum_{k=0}^K \Vert \mathbf{s}_k \Vert^2 \\
    & \leq f(\bar{x}_{0}) - \frac{\alpha(1-\beta)}{16} \frac{1}{n} \sum_{k=0}^K \Vert \mathbf{s}_k \Vert^2 + \mathcal{C}_4 + \frac{\tilde{\mathcal{C}}_4 (1-\beta) \alpha}{4}.
\end{aligned}
\end{equation}
This implies
\begin{equation}
    \min_{k=0,\cdots,K} \frac{1}{n} \Vert \mathbf{s}_{k} \Vert^2 \leq \frac{16(f(\bar{x}_{0}) - f^* + \mathcal{C}_5)}{\alpha (1-\beta) K} ,
\end{equation}
where $\mathcal{C}_5=\mathcal{C}_4 + \frac{\tilde{\mathcal{C}}_4 (1-\beta) \alpha}{4}$ and $f^*=\min_{x \in \mathcal{M}} f(x)$. Using Lemma \ref{g_g}, it holds that
\begin{equation}
\begin{aligned}
    & \min_{k=0,\cdots,K} \Vert \hat{g}_k \Vert^2 \leq \frac{(1-\beta)^2}{2}\min_{k=0,\cdots,K} \frac{1}{n} \Vert \mathbf{s}_{k} \Vert^2 \\
    & \leq \frac{8(1-\beta)(f(\bar{x}_{0}) - f^* + \mathcal{C}_5)}{\alpha K},
\end{aligned}
\end{equation}
Then, it follows from Eq.(\ref{x_m}) that
\begin{equation}
\begin{aligned}
    & \min_{k=0,\cdots,K} \frac{1}{n}\Vert \mathbf{x}_{k} - \bar{\mathbf{x}}_{k} \Vert^2 \\
    & \leq \frac{(1-\beta) \left(\frac{16 \tilde{\mathcal{C}}_1 R^2 \alpha (f(\bar{x}_{0}) - f^* + \mathcal{C}_5)}{1-\beta} + \tilde{\mathcal{C}}_2\right)}{K} .
\end{aligned}
\end{equation}
When $\frac{\alpha}{1-\beta}$ is sufficiently small, $\mathcal{O}\left(\frac{(1-\beta) \tilde{\mathcal{C}}_2}{K}\right)$ will dominate the convergence rate. Since $\Vert \operatorname{grad} f(\bar{x}_k) \Vert^2 \leq 2 \Vert \hat{g}_k \Vert^2 + 2\Vert \operatorname{grad} f(\bar{x}_k) - \hat{g}_k \Vert^2 \leq 2 \Vert \hat{g}_k \Vert^2 + \frac{2L_g^2}{n} \Vert \mathbf{x}_{k} - \bar{\mathbf{x}}_{k} \Vert^2$, we have
\begin{equation}
\begin{aligned}
& \min_{k=0,\cdots,K} \Vert \operatorname{grad} f(\bar{x}_k) \Vert^2 \\
& \leq \frac{(1-\beta)}{\alpha K}\left[\left(16+\frac{32\tilde{\mathcal{C}}_1 R^2 L_g^2 \alpha^2}{1-\beta}\right) (f(\bar{x}_{0}) - f^*) + \right. \\
& \left(16+\frac{32\tilde{\mathcal{C}}_1 R^2 L_g^2 \alpha^2}{1-\beta}\right) \left(\hat{\mathcal{C}}_1+\hat{\mathcal{C}}_2(1-\beta)\alpha + \hat{\mathcal{C}}_3\frac{\alpha}{1-\beta}+ \right. \\
& \left.\left. \hat{\mathcal{C}}_4\frac{\alpha^2}{(1-\beta)^2} + \hat{\mathcal{C}}_5\frac{\alpha^3}{(1-\beta)^3} + \hat{\mathcal{C}}_6\frac{\alpha^4}{(1-\beta)^4} \right) + 2L_g^2\tilde{\mathcal{C}}_2 \alpha \right],
\end{aligned}
\end{equation}
where
\[
\begin{aligned}
& \hat{\mathcal{C}}_1 = (16 Q+19) \tilde{\mathcal{C}}_2 L_g / 2 \\
& \hat{\mathcal{C}}_2 = \tilde{\mathcal{C}}_4 /4 \\
& \hat{\mathcal{C}}_3 = 16L_g^2\tilde{\mathcal{C}}_2\tilde{\mathcal{C}}_3+2CM^2 R^2 L_g^2\tilde{\mathcal{C}}_2 \\
& \hat{\mathcal{C}}_4 = 3C R^2 L_g^3\tilde{\mathcal{C}}_2 [(8Q+\sqrt{n} F + M)^2+M^2]+ 8\tilde{\mathcal{C}}_1 R^2 L_g^3 (16 Q+19) \\
& \hat{\mathcal{C}}_5 = 32 \tilde{\mathcal{C}}_1 R^4 L_g^4 C M^2 \\
& \hat{\mathcal{C}}_6 = 48C R^4 L_g^5\tilde{\mathcal{C}}_1 \tilde{\mathcal{C}}_2 [(8Q+\sqrt{n} F + M)^2+M^2]
\end{aligned}
\]
Note that $\tilde{\mathcal{C}}_1,\tilde{\mathcal{C}}_2,\tilde{\mathcal{C}}_3,\tilde{\mathcal{C}}_4$ are constants independent of $\alpha$ and $\beta$, and thus remains so for $\hat{\mathcal{C}}_1,\hat{\mathcal{C}}_2,\hat{\mathcal{C}}_3,\hat{\mathcal{C}}_4,\hat{\mathcal{C}}_5,\hat{\mathcal{C}}_6$ as well.
When $\frac{\alpha}{1-\beta}$ is sufficiently small, e.g., $\frac{\alpha}{1-\beta} \ll \min \left\{1,\frac{1}{2\tilde{\mathcal{C}}_1 R^2 L_g^2}\right\}$, $\mathcal{O}\left(\frac{(1-\beta)\left[16(f(\bar{x}_{0}) - f^* + \mathcal{C}_5) + 2L_g^2\tilde{\mathcal{C}}_2 \alpha \right]}{\alpha K}\right)$ will dominate the convergence rate. The proof is completed.
\end{proof}

\bibliographystyle{IEEEtran}
\bibliography{sample}

\end{document}